\newtheorem{theorem}{Theorem}[section]
\newtheorem{lemma}[theorem]{Lemma}
\newtheorem{proposition}[theorem]{Proposition}
\newtheorem{corollary}[theorem]{Corollary}
\newtheorem{remark}[theorem]{Remark}
\newcommand\id{\mathop{\rm id}}
\newcommand\tr{\mathop{\rm tr}}
\newcommand{\B}{\mathcal{B}}
\newcommand\nph{\varphi}
\newcommand{\cl}[1]{\mathcal{#1}}
\newcommand{\bb}[1]{\mathbb{#1}}
\newcommand{\sca}[1]{\langle#1\rangle}
\begin{document}

\title{Absolutely dilatable bimodule maps}


\author[A. Chatzinikolaou]{Alexandros Chatzinikolaou}
\address{
Department of Mathematics, National and Kapodistrian University of Athens, Ath\-ens 157 84, Greece}
\email{achatzinik@math.uoa.gr}

\author[I. G. Todorov]{Ivan G. Todorov}
\address{
School of Mathematical Sciences, University of Delaware, 501 Ewing Hall,
Newark, DE 19716, USA}
\email{todorov@udel.edu}

\author[L. Turowska]{Lyudmila Turowska}
\address{Department of Mathematical Sciences, Chalmers University
of Technology and the University of Gothenburg, Gothenburg SE-412 96, Sweden}
\email{turowska@chalmers.se}

\date{27 March 2025}

\begin{abstract}
We characterise absolutely dilatable completely positive maps 
on the space of all bounded operators on a Hilbert space that are also bimodular
over a given von Neumann algebra as  
rotations by a suitable unitary on a larger Hilbert space followed by 
slicing along the trace of an additional ancilla. We 
define the local, quantum and approximately quantum 
types of absolutely dilatable maps, 
according to the type of the 
admissible ancilla. We show that the local 
absolutely dilatable maps
admit an exact factorisation through an abelian ancilla and 
show that they are limits in the point weak* topology 
of conjugations by unitaries in the commutant of the 
given von Neumann algebra. 
We show that the Connes Embedding Problem is equivalent to 
deciding if all absolutely dilatable maps are approximately quantum. 
\end{abstract}

\maketitle



\section{Introduction}\label{s_intro}

Factorisable maps between tracial von Neumann algebras were introduced in the context of non-commutative ergodic theory 
by C. Anantharaman-Delaroche in \cite{ade}. 
These maps are unital, trace preserving and completely positive, 
and thus, in the case where the von Neumann algebras 
are full matrix algebras, they are 
quantum channels in the sense of (finite dimensional)
quantum information theory. 
Factorisable unital quantum channels on the algebra 
$M_n$ of all $n$ by $n$ matrices were 
characterised by U. Haagerup and M. Musat in 
\cite{hm1} as the maps arising from a conjugation 
by a unitary operator in a larger von Neumann algebra, obtained 
by $M_n$ after tensoring with an auxiliary finite tracial von Neumann 
algebra, 
called the ancilla. 
These authors characterised further the factorisable 
quantum channels admitting abelian ancillas, 
as well as the factorisable Schur channels
(that is, quantum channels that are also Schur 
multipliers), giving an example 
that resolved the asymptotic Birkhoff conjecture 
in quantum information theory \cite{svw} in the negative. 
According to \cite[Proposition 2.8]{hm1}, factorisable Schur multipliers 
on $M_n$ correspond precisely to $n\times n$ matrices of the form $[\tau(d_i^*d_j)]_{i,j}$, where $\{d_i\}_{i=1}^n$ is a set of unitary operators 
in a finite tracial von Neumann algebra $(\cl N,\tau)$. 
The paper \cite{hm1} shows as well that factorisability is 
equivalent to the absolute dilatability of the map, a property involving 
a simultaneous factorisation of all non-negative powers thereof. 

Schur multipliers have been studied beyond and before the context of 
quantum information theory; arising in the foundational 
work of I. Schur \cite{schur}, and 
following a characterisation obtained by A. Grothendieck in his
{\it R$\acute{e}$sum$\acute{e}$} \cite{Gro}, 
they have been used in perturbation theory, 
operator integral theory and non-commutative analysis, among others
(see \cite{Pa, jtt, tt-survey} and the references therein). 
In general, given a (standard) measure space $(\Omega,\sigma)$, 
Schur multipliers 
arise from the pointwise multiplication of integral operator kernels 
by a fixed measurable 
\emph{symbol} $\nph : \Omega\times \Omega\to \bb{C}$; 
they can equivalently be thought of as (completely) bounded weak* continuous maps 
on the space $\cl B(H)$ of all bounded linear operators on the  
Hilbert space $H = L^2(\Omega,\sigma)$ that are bimodular over the 
multiplication algebra of $L^{\infty}(\Omega,\sigma)$. 
A characterisation of the absolutely dilatable Schur multipliers 
over a measure space $(\Omega,\sigma)$ was obtained by C. Duquet and C. Le Merdy, 
extending the Haagerup-Musat characterisation to the elegant 
representation of the symbol $\nph(s,t) = \tau(d(s)^*d(t))$, 
where $d : \Omega\to \cl N$ is a unitary-valued measurable map. 
Further properties and connections with Fourier multipliers were 
given in \cite{duquet}. 

The main result of the present paper is a non-commutative version 
of Duquet-Le Merdy's theorem; this is obtained in Section \ref{s_dilmod}. 
We replace the modularity over a maximal abelian 
von Neumann algebra by a modularity over a 
von Neumann subalgebra $\cl D'\subseteq \cl B(H)$ 
(we denote by $\cl D'$ the commutant of a von Neumann algebra $\cl D$). 
Observing that measurable unitary-valued functions $d : X\to \cl N$
correspond to unitary operators in the von Neumann algebra 
$L^{\infty}(\Omega,\sigma)\bar\otimes \cl N$, we show that a 
unital completely positive $\cl D'$-modular map $\Phi : \cl B(H)\to \cl B(H)$
is absolutely dilatable 
if and only if 
it admits a representation of the form 
$$\Phi(z) = (\id\hspace{-0.05cm}\mbox{}_{\cl B(H)}\otimes \tau)(D^*(z\otimes 1_{\cl N})D),$$
where $D\in \cl D\bar\otimes\cl B(K)$ is a 
unitary satisfying natural requirements 
and $\cl N\subseteq \cl B(K)$ is a 
von Neumann algebra equipped with a finite trace $\tau$. 
In addition, we present an equivalent description of $\Phi$ in terms of its 
\emph{operator symbol} \cite{jtt}.
We borrow techniques from \cite{dlem}, 
and show that a large part of the proof of Theorem 1.1 therein can be 
lifted to the non-commutative and modular setting by using 
discrete coordinates as opposed to the measurable coordinates 
employed in \cite{dlem}. 

The origin of a natural hierarchy for factorisable maps on $M_n$, 
depending on the ancilla type, 
can be found already in \cite{hm2}. 
In Section \ref{s_hier}, we exhibit such a hierarchy 
for absolutely dilatable 
$\cl D'$-bimodule maps, 
which is also new in the case where $\cl D = L^{\infty}(\Omega,\sigma)$. 
This is inspired by the hierarchy of no-signalling correlations,
consisting of local, quantum, approximately quantum and quantum commuting 
correlations (see e.g. \cite{lmprsstw}). We define the subclasses of \emph{locally}, \emph{quantum} and 
\emph{approximately quantum factorisable} 
maps, as classes of absolutely 
dilatable maps admitting 
ancilla of a specific type (abelian, finite dimensional, and 
Connes embeddable, respectively). 
Motivated by the formulation of the Connes Embedding Problem 
in terms of equality of the quantum commuting and the 
approximately quantum correlation classes 
\cite{oz, jnpp, fritz}, we show that the 
negative answer to Connes Embedding Problem, established in 
\cite{jnvwy}, is equivalent to the 
properness of the inclusion of the class of 
approximately quantum factorisable maps in that of 
all factorisable maps (see Theorem \ref{th_clova} for the precise statement). 
We note that a type hierarchy 
for factorisable channels on $M_n$
that are associated with quantum permutations
(as opposed to general block unitary matrices), was 
defined and examined in \cite{pr}.

\medskip





\noindent 
{\bf Acknowledgements. } 
We are grateful to Jason Crann and Christian Le Merdy 
for useful discussions on 
the topic of the paper.
The authors thank the anonymous referee for many
suggestions that led to an improvement of the exposition.
The second author was supported by NSF grants 
CCF-2115071 and DMS-2154459. The third author was supported by the Swedish Research Council project grant 2023-04555 and GS Magnusons Fond MF2022-0006. The research described in this paper was carried out within the framework of the National Recovery and Resilience Plan Greece 2.0, funded by the European Union - NextGenerationEU (Implementation Body: HFRI. Project name: 
Noncommutative analysis: operator systems and non-locality. 
HFRI Project Number: 015825).


\section{A characterisation theorem}\label{s_dilmod}

In this section, we prove the main result of the paper, Theorem \ref{tar1}.
We begin by setting notation, and recalling some basic definitions and facts about 
completely positive maps and dilatability.

Let $H$ be a Hilbert space. We denote by $\cl B(H)$ the space of 
all bounded linear operators on $H$, and by $\cl S_p(H)$ the Schatten $p$-class 
on $H$. We will only need the latter ideals in the cases $p = 1$ and $p = 2$.
We denote by ${\rm tr}$ the trace on $\cl S_1(H)$.
The commutant of a von Neumann algebra $\cl N\subseteq \cl B(H)$ will be denoted by
$\cl N'$, its predual by 
$\cl N_*$ and, for $z\in \cl N$ and $\omega\in \cl N_*$ we often write 
$\langle z,\omega\rangle_{\cl N,\cl N_*} = \omega(z)$; when the 
von Neumann algebra $\cl N$ is clear from the context, we use simply 
$\langle \cdot,\cdot\rangle$. 
The same notation will be employed for Hilbert space inner products 
(it will be clear from the context which of these two uses is intended). 
We recall the canonical identification $\cl S_1(H)^* = \cl B(H)$, 
implemented by trace duality 
$\langle x,y\rangle = {\rm tr}(xy)$, $x\in \cl B(H)$, $y\in \cl S_1(H)$. 
If $\tau_{\cl N}$ is a semi-finite normal faithful trace on $\cl N$, 
we call the pair
$(\cl N,\tau_{\cl N})$ a \emph{tracial von Neumann algebra}.
If $\tau_{\cl N}$ is moreover finite and normalised so that $\tau_{\cl N}(1) = 1$, 
we call $(\cl N,\tau_{\cl N})$ \emph{finite}.

The cone of all positive elements in a von Neumann algebra 
$\cl N$ will be denoted by $\cl N^+$, and 
$M_n(\cl N)$ will stand for the von Neumann algebra of all 
$n$ by $n$ matrices with entries in $\cl N$. 
A linear map $\Phi : \cl B(H)\to \cl B(H)$ is called \emph{positive}, if 
$\Phi(\cl B(H)^+)\subseteq \cl B(H)^+$, and \emph{completely positive}
if the map $\Phi^{(n)} : M_n(\cl B(H)) \to M_n(\cl B(H))$, given by 
$\Phi^{(n)}((x_{i,j})_{i,j}) = (\Phi(x_{i,j}))_{i,j}$, 
is positive for every $n\in \bb{N}$.
If $\cl D \subseteq \cl B(H)$ is a von Neumann algebra, 
a linear map $\Phi : \cl B(H)\to \cl B(H)$ is called \emph{$\cl D$-modular}
(or a \emph{$\cl D$-bimodule map}) if 
$$\Phi(axb) = a\Phi(x)b, \ \ \ x\in \cl B(H), \ a,b\in \cl D.$$

We have that the following are equivalent, for a linear map $\Phi$ 
(see \cite{bs}):
\begin{itemize}
\item[(i)] $\Phi$ is completely bounded, weak* continuous and $\cl D'$-modular; 

\item[(ii)] there exist sets $(a_i)_{i\in \bb{I}}\subseteq \cl D$ and 
$(b_i)_{i\in \bb{I}}\subseteq \cl D$, such that 
the series  
\begin{equation}\label{eq_twoser}
\sum_{i\in \bb{I}} a_i^*a_i \ \text{ and } \  
\sum_{i\in \bb{I}} b_i^*b_i
\end{equation}
are weak* convergent and 
\begin{equation}\label{eq_Phi}
\Phi(x) = \sum_{i\in \bb{I}} a_i^* x b_i, \ \ \ x\in \cl B(H),
\end{equation}
where the latter series is weak* convergent. 
\end{itemize}
Assuming that the map $\Phi$ has the representation (\ref{eq_Phi}), 
the series 
$\sum_{i\in \bb{I}} a_i^*\otimes b_i $
is weak* convergent as an element of 
the weak* Haagerup tensor product 
$\cl D\otimes_{\rm w^*h}\cl D$ \cite{bs}. 
Furthermore, the elements of $\cl D\otimes_{\rm w^*h}\cl D$
correspond canonically to elements of $\cl B(H \otimes H)$
(here, and in the sequel, we denote by $H\otimes K$ the Hilbertian tensor 
products of the Hilbert spaces $H$ and $K$).
This fact is contained in \cite[Corollary 3.8]{bs}, but we provide a sketch 
of a direct argument that will be used later. 
Assume that the series (\ref{eq_twoser}) are weak* convergent. 
For a finite set $F\subseteq \bb{I}$, let 
$u_F = \sum_{i\in F} a_i^*\otimes b_i$, 
viewed as a bounded operator on $H \otimes H$. 
Given $\xi,\xi',\eta,\eta' \in H$, we have that
\begin{eqnarray*}
\left |\langle u_F(\xi\otimes\eta),\xi'\otimes\eta'\rangle\right |
& \leq & 
\sum_{i\in F} \left |\langle a_i^*\xi,\xi'\rangle\right | 
\left |\langle b_i \eta,\eta'\rangle\right |
= 
\sum_{i\in F} \left |\langle \xi,a_i\xi'\rangle\right | 
\left |\langle b_i \eta,\eta'\rangle\right |\\
& \leq & 
\|\xi\|\|\eta'\| \sum_{i\in F} \|a_i\xi'\| \|b_i\eta\|\\
& \leq & 
\|\xi\|\|\eta'\| \left(\sum_{i\in F} \|a_i\xi'\|^2\right)^{\frac{1}{2}}  
\left(\sum_{i\in F}  \|b_i\eta\|^2\right)^{\frac{1}{2}}\\
& = & 
\|\xi\|\|\eta'\| \left(\sum_{i\in F} \langle a_i^*a_i\xi',\xi'\rangle\right)^{\frac{1}{2}}  
\left(\sum_{i\in F} \langle b_i^*b_i\eta,\eta\rangle\right)^{\frac{1}{2}}\\
& = & 
\|\xi\|\|\eta'\| \left\langle \left(\sum_{i\in F} a_i^*a_i\right)\xi',\xi'\right\rangle^{\frac{1}{2}}  
\left \langle \left(\sum_{i\in F} b_i^*b_i\right)\eta,\eta\right\rangle^{\frac{1}{2}}.
\end{eqnarray*}
It follows that the net 
$(\langle u_F\zeta_1,\zeta_2\rangle)_{F}$
is Cauchy for all vectors $\zeta_1,\zeta_2$ in the 
algebraic tensor product $H \odot H$.
The uniform boundedness of the net $(\|u_F\|)_{F}$ now implies that 
the net $(\langle u_F\zeta_1,\zeta_2\rangle)_{F}$
is Cauchy for all $\zeta_1,\zeta_2\in H \otimes H$, that is, 
the net $(u_F)_{F}$ is Cauchy in the weak operator topology. 
We let $u_{\Phi}$ be its weak limit, 
as an element of $ \cl B(H\otimes H)$ and call it the \emph{operator symbol} of $\Phi$. 
In the case $\Phi$ is completely positive, the representation 
(\ref{eq_Phi}) is achieved with $a_k = b_k$ for all $k$. 
We refer the reader to \cite{Pa} for further 
background on completely positive maps. 

Suppose that $J : \cl B(H)\to \cl M$ is 
a trace-preserving normal *-homomorphism. 
By trace-preservation and the fact that the non-commutative 
$L^1$-space, associated with $(\cl M,\tau_{\cl M})$, 
can be canonically identified with the predual $\cl M_*$, we 
have that the restriction of $J$ to $\cl S_1(H)$ takes values 
in $\cl M_*$, thus obtaining a canonical map
$J_1 : \cl S_1(H)\to \cl M_*$. 
A weak* continuous map 
$\Phi : \cl B(H)\to \cl B(H)$
is called \emph{absolutely dilatable} \cite{dlem} 
if there exists a von Neumann algebra $\cl M$, equipped with 
a semi-finite faithful trace $\tau_{\cl M}$, a $\tau_{\cl M}$-preserving normal *-automorphism 
$U : \cl M\to \cl M$, and 
a unital trace-preserving normal *-homomorphism $J : \cl B(H)\to \cl M$, 
such that
$$\Phi^n(x) = J_1^*\circ U^n\circ J(x), \ \ \ x\in \cl B(H), \ n\in \bb{Z}_+$$
(here $J_1^* : \cl M\to \cl B(H)$ is the adjoint of $J_1$; 
note it coincides with the unique weak* continuous 
conditional expectation from $\cl M$ onto $J(\cl B(H))$). 

We note that every absolutely dilatable map is automatically 
unital, completely positive and trace-preserving; 
in addition, the *-homomorphism $J$ is automatically injective.
If $ H$ is separable and $ \Phi$ is absolutely dilatable with the von Neumann algebra $ \cl M$ having a 
separable predual, we say that $\Phi $ admits a separable absolute dilation. Note that a von Neumann algebra  $\cl M$ has a separable predual if and only if there exists a separable Hilbert space  $H$ such that $\cl M\subseteq \cl B(H)$. 

In the following, we will use the standard leg notation for operators acting 
on the tensor product of two Hilbert spaces:
if $H$ and $K$ are Hilbert spaces and $D\in \cl B(H\otimes K)$, 
by $D_{2,3}$ we denote the operator $I_H\otimes D\in \cl B(H\otimes H\otimes K)$, 
and by $D_{1,3}$ -- the operator $(I_H\otimes\frak{f}^{-1})\circ (D\otimes I_H)\circ(I_H\otimes\frak{f})$, 
where $\frak{f} : H\otimes K\to K\otimes H$ is the unitary operator, given by 
$\frak{f}(\xi_1\otimes \xi_2) = \xi_2\otimes\xi_1$. 
If $\cl D\subseteq \cl B(H)$ and $\cl N\subseteq \cl B(K)$ are von Neumann 
algebras, we write as usual $\cl D\bar\otimes\cl N$ for their 
spatial weak* tensor product. If $\omega\in \cl D_*$ and $T\in \cl D\bar\otimes\cl N$, 
we let $L_{\omega} : \cl D\bar\otimes\cl N\to \cl N$ be the (linear) slice map,
given by $L_{\omega}(A\otimes X) = \omega(A)X$, $A\in \cl D$, $X\in \cl N$. 
The same notation will be used for slice maps 
along functionals on $\cl N$. 

We include two lemmas that will be needed in the proof of Theorem \ref{tar1} below. 

\begin{lemma}\label{l_symbolc}
Let $H$ be a Hilbert space with orthonormal basis $(e_i)_{i\in \bb{I}}$, $\epsilon_{i,j}=e_ie_j^*$, $i,j\in \bb{I}$ the corresponding matrix units, $ \cl D \subseteq \cl B(H)$ be a von Neumann algebra and 
$\Phi : \cl B(H)\to \cl B(H)$ be a weak* continuous $\cl D'$-modular completely bounded map. 
Then 
$$\langle\Phi(\epsilon_{i,j}),\epsilon_{k,l}\rangle = 
\langle u_{\Phi} (e_i\otimes e_k), e_l\otimes e_j\rangle, \ \ \ 
i,j,k,l \in \bb{I}.$$
\end{lemma}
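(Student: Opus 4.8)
\emph{Proof strategy.} The plan is to expand both sides of the asserted identity using the representation (\ref{eq_Phi}) of $\Phi$ together with the construction of $u_{\Phi}$, and to verify that each side reduces to one and the same scalar series. Since $\Phi$ is weak* continuous, completely bounded and $\cl D'$-modular, the equivalence recalled just before the lemma supplies families $(a_p)_p,(b_p)_p\subseteq\cl D$ (indexed by a set kept notationally disjoint from $\bb{I}$) such that $\sum_p a_p^*a_p$ and $\sum_p b_p^*b_p$ are weak* convergent, $\Phi(x)=\sum_p a_p^*xb_p$, and $u_{\Phi}$ is the weak operator limit of the net $u_F=\sum_{p\in F}a_p^*\otimes b_p$ over finite subsets $F$ of that index set.

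First I would treat the right-hand side. By bilinearity and the definition of $u_{\Phi}$ as the weak limit of $(u_F)_F$, one gets $\langle u_{\Phi}(e_i\otimes e_k),e_l\otimes e_j\rangle=\lim_F\sum_{p\in F}\langle a_p^*e_i,e_l\rangle\langle b_pe_k,e_j\rangle$. The chain of estimates in the paragraph preceding the lemma, specialised to $\xi=e_i$, $\xi'=e_l$, $\eta=e_k$, $\eta'=e_j$, shows that $\sum_p|\langle a_p^*e_i,e_l\rangle\langle b_pe_k,e_j\rangle|<\infty$, so the right-hand side equals the unordered sum $\sum_p\langle a_p^*e_i,e_l\rangle\langle b_pe_k,e_j\rangle$.

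Next I would treat the left-hand side. Using the elementary identity $\langle T,\epsilon_{k,l}\rangle=\tr(Te_ke_l^*)=\langle Te_k,e_l\rangle$, valid for every $T\in\cl B(H)$, together with the facts that $\epsilon_{k,l}\in\cl S_1(H)$ and that $\sum_p a_p^*\epsilon_{i,j}b_p$ converges weak* to $\Phi(\epsilon_{i,j})$, I would move the pairing with $\epsilon_{k,l}$ inside the sum and compute $\langle\Phi(\epsilon_{i,j}),\epsilon_{k,l}\rangle=\sum_p\langle a_p^*e_ie_j^*b_pe_k,e_l\rangle=\sum_p\langle b_pe_k,e_j\rangle\langle a_p^*e_i,e_l\rangle$. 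This is precisely the series obtained on the right, so the two sides coincide, as claimed.

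The computation itself is routine; the one point deserving care is that two distinct modes of convergence are in play — the weak* convergence of the operator series $\sum_p a_p^*\epsilon_{i,j}b_p$, tested against the trace-class operator $\epsilon_{k,l}$, and the weak operator convergence defining $u_{\Phi}$ — and both must be seen to collapse onto the common \emph{absolutely} convergent scalar series, which is exactly what makes interchanging the summation with the net limit legitimate. I do not anticipate any serious obstacle.
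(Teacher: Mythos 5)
Your proof is correct and follows essentially the same route as the paper: expand $\Phi(\epsilon_{i,j})$ via the representation $\Phi(x)=\sum_p a_p^*xb_p$, pair against the trace-class operator $\epsilon_{k,l}$, and recognise the resulting scalar series as the matrix coefficient $\langle u_{\Phi}(e_i\otimes e_k),e_l\otimes e_j\rangle$. Your additional remarks on the absolute convergence of the scalar series (via the Cauchy--Schwarz estimate preceding the lemma) and the legitimacy of interchanging the pairing with the weak* limit are a welcome, if routine, elaboration of what the paper leaves implicit.
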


\begin{proof}
Assuming that (\ref{eq_Phi}) holds, we have 
\begin{eqnarray*}
\langle \Phi(\epsilon_{i,j}), \epsilon_{k,l}\rangle 
& = & 
\left\langle \sum_{r\in \bb{I}} a_r^* (e_ie_j^*) b_r, e_ke_l^* \right\rangle
= 
\sum_{r\in \bb{I}} \left\langle (a_r^*e_i)(b_r^*e_j)^*, e_ke_l^* \right\rangle\\
& = & 
\sum_{r\in \bb{I}} \tr((a_r^*e_i)(b_r^*e_j)^* (e_ke_l^*))
= 
\sum_{r\in \bb{I}} \langle a_r^*e_i,e_l\rangle \langle e_k, b_r^*e_j\rangle\\
& = & 
\sum_{r\in \bb{I}} \langle a_r^*e_i,e_l\rangle \langle b_r e_k, e_j\rangle
= 
\left\langle u_{\Phi} (e_i\otimes e_k), e_l\otimes e_j\right\rangle.
\end{eqnarray*}
\end{proof}

Suppose that $D\in \cl B(H\otimes K)$ and let 
$\{\epsilon_{i,j}\}_{i,j\in \bb{I}}$ be 
a matrix unit system in $\cl B(H)$ 
arising from an orthonormal basis. 
If $d_{i,j} = L_{\epsilon_{j,i}}(D)$, $i,j\in \bb{I}$, we have that 
$D=\sum_{i,j\in \bb{I}} \epsilon_{i,j}\otimes d_{i,j}$.

\begin{lemma}\label{l_returns}
Let $H$ and $K$ be Hilbert spaces, $\cl N\subseteq \cl B(K)$ be a von Neumann algebra 
and $D\in \cl B(H\otimes K)$. 
Fix a matrix unit system $\{\epsilon_{i,j}\}_{i,j\in \bb{I}}$ in $\cl B(H)$ arising from an orthonormal basis, and write 
$d_{i,j} = L_{\epsilon_{j,i}}(D)$, $i,j\in \bb{I}$. 
The following are equivalent:
\begin{itemize}
\item[(i)] $d_{i,j}^*d_{k,l}\in \cl N$ for all $i,j,k,l\in\bb{I}$;

\item[(ii)] $L_{\omega_1}(D^*)L_{\omega_2}(D)\in \cl N$ for all 
$\omega_1,\omega_2\in \cl B(H)_*$; 

\item[(iii)] $D_{1,3}^*D_{2,3}\in \cl B(H)\bar\otimes\cl B(H)\bar\otimes\cl N$;

\item[(iv)]
$D^*(x\otimes I_K)D\in \cl B(H)\bar\otimes\cl N$ for every $ x\in \cl B(H)$. 
\end{itemize}
\end{lemma}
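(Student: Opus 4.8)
The plan is to reduce all four conditions to (i) by a short cycle, using the coordinate system $\{d_{i,j}\}$ together with the standard slice-map description of the spatial tensor product. I will use throughout that each slice map $L_\omega$ ($\omega\in\cl B(H)_*$) is weak* continuous with $\|L_\omega\|=\|\omega\|$, that an operator $S$ lies in $\cl B(H)\bar\otimes\cl M$ if and only if $L_\omega(S)\in\cl M$ for all $\omega\in\cl B(H)_*$ (a standard fact whose substantive direction uses the commutation theorem $\cl B(H)\bar\otimes\cl M=(1_H\otimes\cl M')'$ and that slices separate points), and that, since the matrix-unit functionals span a norm-dense subspace of $\cl B(H)_*$ and $\omega\mapsto L_\omega(S)$ is norm continuous, membership need only be tested on matrix-unit functionals, i.e.\ one need only check that all ``entries'' $L_{\epsilon_{i,j}}(S)$ lie in $\cl M$; the analogous statement holds with $\cl B(H)$ replaced by $\cl B(H)\bar\otimes\cl B(H)$. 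First I would record two coordinate expansions derived from $D=\sum_{i,j}\epsilon_{i,j}\otimes d_{i,j}$: since the two $H$-legs of $D_{1,3}$ and $D_{2,3}$ are disjoint,
\[
D_{1,3}^*D_{2,3}=\sum_{p,q,k,l}\epsilon_{p,q}\otimes\epsilon_{k,l}\otimes d_{q,p}^*d_{k,l},
\]
while multiplying out expansions gives, for $x=(x_{q,k})\in\cl B(H)$,
\[
D^*(x\otimes I_K)D=\sum_{p,l}\epsilon_{p,l}\otimes\Bigl(\sum_{q,k}x_{q,k}\,d_{q,p}^*d_{k,l}\Bigr),
\]
and in particular $D^*(\epsilon_{m,n}\otimes I_K)D=\sum_{p,l}\epsilon_{p,l}\otimes d_{m,p}^*d_{n,l}$; all these expansions converge weak*, being the matrix expansions of bounded operators.

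For the cycle (i) $\Rightarrow$ (ii) $\Rightarrow$ (iii) $\Rightarrow$ (i): the implication (iii) $\Rightarrow$ (i) is immediate, since slicing $D_{1,3}^*D_{2,3}$ by matrix-unit functionals on the first two legs recovers exactly the operators $d_{q,p}^*d_{k,l}$, which (iii) forces into $\cl N$. For (i) $\Rightarrow$ (ii) I would observe that for any $\omega\in\cl B(H)_*$ the operator $L_\omega(D)$ is a norm limit of finite linear combinations of the $d_{i,j}$, and $L_\omega(D^*)$ a norm limit of finite combinations of the $d_{i,j}^*$ (approximate $\omega$ in $\cl B(H)_*$ by matrix-unit combinations and use $\|L_{\omega-\omega'}\|=\|\omega-\omega'\|$); since (i) gives $d_{i,j}^*d_{k,l}\in\cl N$ for all indices, every product of a combination of $d^*$'s with a combination of $d$'s lies in $\cl N$, and passing to the norm limit (joint norm continuity of multiplication, $\cl N$ norm closed) yields $L_{\omega_1}(D^*)L_{\omega_2}(D)\in\cl N$. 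For (ii) $\Rightarrow$ (iii) I would verify on matrix units — extending by weak* continuity of the maps involved — the identity $(L_{\omega_1}\otimes L_{\omega_2}\otimes\id_{\cl B(K)})(D_{1,3}^*D_{2,3})=L_{\omega_1}(D^*)\,L_{\omega_2}(D)$ for $\omega_1,\omega_2\in\cl B(H)_*$; by (ii) the right-hand side lies in $\cl N$ for all $\omega_1,\omega_2$, and since product functionals span a norm-dense subspace of $\cl B(H\otimes H)_*$ every slice of $D_{1,3}^*D_{2,3}$ along $\cl B(H\otimes H)_*$ lies in $\cl N$, i.e.\ $D_{1,3}^*D_{2,3}\in\cl B(H)\bar\otimes\cl B(H)\bar\otimes\cl N$. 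Since (ii) $\Rightarrow$ (i) is also trivial (take matrix-unit functionals), these three conditions are equivalent.

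It remains to link (iv). For (iv) $\Rightarrow$ (i) one takes $x=\epsilon_{m,n}$ and reads off the entries of $D^*(\epsilon_{m,n}\otimes I_K)D$. For (i) $\Rightarrow$ (iv): by the last expansion the entries of $D^*(\epsilon_{m,n}\otimes I_K)D$ are the operators $d_{m,p}^*d_{n,l}$, which lie in $\cl N$ by (i), so $D^*(\epsilon_{m,n}\otimes I_K)D\in\cl B(H)\bar\otimes\cl N$; since $x\mapsto D^*(x\otimes I_K)D$ is weak* continuous, $\cl B(H)\bar\otimes\cl N$ is weak* closed, and finite linear combinations of the matrix units $\epsilon_{m,n}$ are weak* dense in $\cl B(H)$, we conclude $D^*(x\otimes I_K)D\in\cl B(H)\bar\otimes\cl N$ for all $x$.

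The main difficulty I anticipate is organisational rather than technical: there is no bounded, let alone weak* continuous, ``contract the middle leg into $x$'' map $\cl B(H)\bar\otimes\cl B(H)\to\cl B(H)$ — for $x=I_H$ it would be the multiplication map, which is unbounded on the spatial tensor product — so (iv) cannot be obtained from (iii) by a single slice-type map. This is precisely why the route for (iv) passes through (i) and uses weak* density of the finitely supported matrices, for which $D^*(\epsilon_{m,n}\otimes I_K)D$ has manifestly $\cl N$-valued entries. The remainder is routine topological bookkeeping: interpreting the coordinate expansions in the weak* topology and justifying each reduction to matrix-unit (respectively product) functionals by norm-density in the relevant predual.
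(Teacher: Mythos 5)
Your proof is correct and follows essentially the same route as the paper: expand $D$ in the coordinates $d_{i,j}$, use the norm-to-norm continuity of $\omega\mapsto L_\omega(D)$ together with norm density of the span of matrix-unit functionals in the predual, the identity $L_{\omega_1\otimes\omega_2}(D_{1,3}^*D_{2,3})=L_{\omega_1}(D^*)L_{\omega_2}(D)$, the slice-map criterion for membership in $\cl B(H)\bar\otimes\cl N$, and weak* density of finite matrix-unit combinations for (i)$\Rightarrow$(iv). The only (non-load-bearing) quibble is that your displayed expansion of $D^*(x\otimes I_K)D$ for general $x$ asserts weak* convergence of the inner sums without justification, but your argument only ever uses the case $x=\epsilon_{m,n}$ plus weak* continuity, so nothing is affected.
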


\begin{proof}
(i)$\Rightarrow$(ii) 
Fix $k,l\in \bb{I}$. The norm-weak*
continuity and linearity of the map $\omega\to L_{\omega}(D^*)$
imply that $L_{\omega_1}(D^*)d_{k,l}\in \cl N$. 
Now the norm-weak* continuity and linearity  of the map $\omega\to L_{\omega}(D)$ show that 
$L_{\omega_1}(D^*)L_{\omega_2}(D)\in \cl N$. 

(ii)$\Rightarrow$(i) is trivial.

(ii)$\Rightarrow$(iii)
By the slice map property of von Neumann algebras \cite{kraus}, 
it suffices to show that $L_{\sigma}(D_{1,3}^*D_{2,3})\in \cl N$,
for all $\sigma\in \cl B(H\otimes H)_*$. By the norm-weak* continuity and the linearity of the map 
$\sigma\to L_{\sigma}(D_{1,3}^*D_{2,3})$, it suffices to assume that 
$\sigma = \omega_1\otimes \omega_2$, where $\omega_1,\omega_2\in \cl S_1(H)$. 
The conclusion now follows from the identity
\begin{equation}\label{eq_2313}
L_{\omega_1\otimes \omega_2}(S_{1,3}T_{2,3}) 
= L_{\omega_1}(S)L_{\omega_2}(T), \ \ \ S,T\in \cl B(H\otimes K).
\end{equation}
To see (\ref{eq_2313}), note first that the identity is trivial in the case where 
$S = A\otimes X$ and $T = B\otimes Y$, 
for some $A,B\in \cl B(H)$ and $X,Y\in \cl B(K)$. 
By the weak* continuity of the slice map and of the one-sided 
operator multiplication, (\ref{eq_2313}) holds true for an elementary 
tensor $S = A\otimes X$ and an arbitrary $T\in \cl B(H\otimes K)$.
Using once again the weak* continuity of the slice map and of the one-sided 
operator multiplication, we arrive at (\ref{eq_2313}) in the stated generality. 

(iii)$\Rightarrow$(ii) is a direct consequence of (\ref{eq_2313}). 

(i)$\Rightarrow$(iv)
It is enough to see that $L_\omega(D^*(x\otimes I_K)D)\in\cl N$ for $\omega=\epsilon_{k,l}$ and $x=\epsilon_{i,j}$. 
We have
\begin{eqnarray}
&&L_{\epsilon_{k,l}}(D^*(\epsilon_{i,j}\otimes I_K)D)\nonumber\\&&=L_{\epsilon_{k,l}}((\epsilon_{l,l}\otimes I_K)D^*(\epsilon_{i,i}\otimes I_K)(\epsilon_{i,j}\otimes I_K)(\epsilon_{j,j}\otimes I_K)D(\epsilon_{k,k}\otimes I_K))\label{eq}\\&&=L_{\epsilon_{k,l}}(\epsilon_{l,k}
\otimes d_{i,l}^*d_{j,k})=d_{i,l}^*d_{j,k},\nonumber
\end{eqnarray} 
which belongs to $\cl N$ by the assumption. 

(iv)$\Rightarrow$(i) follows from reversing the steps in the previous 
paragraph. 
\end{proof}

Let $D\in \cl B(H\otimes K)$ be a unitary operator. 
If $\cl N\subseteq \cl B(K)$ is a von Neumann algebra, we say that 
$D$ {\it returns to} $\cl N$, if the equivalent conditions in Lemma \ref{l_returns} 
are satisfied. 
By Lemma  \ref{l_returns}, if this happens then
the map 
$\Phi_D : \cl B(H)\to \cl B(H)$, given by 
\begin{equation}\label{eq_PhiDd}
\Phi_D(x) = (\id\otimes\tau_{\cl N})(D^*(x\otimes I_K)D), \ \ \ 
x\in \cl B(H),
\end{equation}
is well-defined. 

The next result is a simultaneous
generalisation of \cite[Theorem 2.2]{hm1} and \cite[Theorem 1.1 and 7.1]{dlem}. 
We will say that a unitary operator $ D\subseteq \cl B(H \otimes K)$ 
which returns to $\cl N$
\emph{satisfies trace preservation} if
there exists a unit vector $e\in H$ such that 
$ (\tr\otimes \tau_{\cl N})\big(D^*(ee^*\otimes 1_{\cl N})D\big)=1$. 

\begin{theorem}\label{tar1}
Let $H$ be a Hilbert space and $\cl D\subseteq \cl B(H)$ be a von Neumann algebra. 
The following are equivalent, for a weak* continuous, unital, completely positive map 
$\Phi : \cl B(H)\to \cl B(H)$:

\begin{itemize}
\item[(i)] $\Phi$ is $\cl D'$-modular and absolutely dilatable; 

\item[(ii)] there exist 
a Hilbert space $K$, 
a finite tracial von Neumann algebra $(\cl N,\tau_{\cl N})$ 
acting on $K$, and  a
unitary operator $D\in \cl D\bar\otimes\cl B(K)$ 
that returns to $\cl N$ and satisfies trace preservation,
such that $\Phi = \Phi_D$;
\item[(iii)] 
there exist a
Hilbert space $K$, 
a finite tracial von Neumann algebra $(\cl N,\tau_{\cl N})$ 
acting on $K$, and a unitary operator $D\in \cl D\bar\otimes\cl B(K)$ 
that returns to $\cl N$ and satisfies trace preservation,
such that
$u_\Phi = (\id\otimes\id\otimes\tau_{\cl N})(D_{1,3}^*D_{2,3})$. 
\end{itemize}
\end{theorem}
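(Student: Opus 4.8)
The plan is to prove the cycle of implications (ii)$\Rightarrow$(iii)$\Rightarrow$(i)$\Rightarrow$(ii), with the bulk of the work going into the last implication, which is the genuine analogue of the Duquet--Le Merdy theorem.

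\smallskip

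\textbf{(ii)$\Rightarrow$(iii).} This is essentially bookkeeping via the operator symbol. Given $D\in\cl D\bar\otimes\cl B(K)$ returning to $\cl N$ with $\Phi=\Phi_D$, I would fix an orthonormal basis $(e_i)$ of $H$ with matrix units $\epsilon_{i,j}$ and write $d_{i,j}=L_{\epsilon_{j,i}}(D)$, so that $D=\sum_{i,j}\epsilon_{i,j}\otimes d_{i,j}$. Using the computation in the proof of Lemma~\ref{l_returns}(i)$\Rightarrow$(iv), one gets $L_{\epsilon_{k,l}}(D^*(\epsilon_{i,j}\otimes I_K)D)=d_{i,l}^*d_{j,k}$, hence $\langle\Phi_D(\epsilon_{i,j}),\epsilon_{k,l}\rangle=\tau_{\cl N}(d_{i,l}^*d_{j,k})$. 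On the other side, by (\ref{eq_2313}) one has $L_{\epsilon_{i,l}\otimes\epsilon_{k,j}}(D_{1,3}^*D_{2,3})=L_{\epsilon_{i,l}}(D^*)L_{\epsilon_{k,j}}(D)=d_{i,l}^*d_{j,k}$ (matching the indexing convention in Lemma~\ref{l_symbolc}), so applying $\id\otimes\id\otimes\tau_{\cl N}$ and comparing with Lemma~\ref{l_symbolc} shows the matrix coefficients of $u_\Phi$ agree with those of $(\id\otimes\id\otimes\tau_{\cl N})(D_{1,3}^*D_{2,3})$. Since $D\in\cl D\bar\otimes\cl B(K)$, each $d_{i,j}\in\cl D$, so $u_\Phi\in\cl D\bar\otimes\cl D$ and in particular $\Phi$ is $\cl D'$-modular; weak* continuity and complete positivity are built into the form (\ref{eq_PhiDd}).

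\smallskip

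\textbf{(iii)$\Rightarrow$(i).} Here I would show that the formula for $u_\Phi$ forces $\Phi=\Phi_D$ (by reversing the coefficient computation above, since a weak* continuous completely bounded $\cl D'$-modular map is determined by its operator symbol), so that $\Phi$ is $\cl D'$-modular. For absolute dilatability, I would build the standard dilation: put $\cl M=\cl B(H)\bar\otimes\cl N$ with trace $\tau_{\cl M}=\tr\otimes\tau_{\cl N}$, let $J:\cl B(H)\to\cl M$ be $J(x)=x\otimes 1_{\cl N}$, and let $U=\mathrm{Ad}(D^*)$. Trace preservation of $J$ is exactly the ``satisfies trace preservation'' hypothesis together with $\cl D'$-modularity (which propagates the vector-state condition to all of $\cl S_1(H)$), and $\tau_{\cl M}$-invariance of $U$ needs $D\in\cl D\bar\otimes\cl B(K)$ returning to $\cl N$ so that $\mathrm{Ad}(D^*)$ maps $\cl M$ into $\cl M$, plus the trace-preservation identity to see it is a $*$-automorphism preserving $\tau_{\cl M}$. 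Then $J_1^*\circ U^n\circ J=\Phi_D^n=\Phi^n$ by iterating (\ref{eq_PhiDd}), using that $J_1^*$ is the conditional expectation $\id\otimes\tau_{\cl N}$ onto $\cl B(H)\otimes 1$.

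\smallskip

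\textbf{(i)$\Rightarrow$(ii).} This is the heart of the matter, and I expect it to be the main obstacle. Starting from an absolute dilation $(\cl M,\tau_{\cl M},U,J)$ of the $\cl D'$-modular map $\Phi$, the idea --- following \cite{dlem} --- is to produce the ancilla and the unitary by a GNS/standard-form construction relative to $J(\cl B(H))\subseteq\cl M$, but using a discrete matrix-unit coordinatisation of $\cl B(H)$ in place of the measurable coordinates of $\cl{L}^\infty(\Omega,\sigma)$. Concretely, I would fix an orthonormal basis of $H$ and consider the operators $J(\epsilon_{i,j})$ and their images $U(J(\epsilon_{i,j}))$ in $\cl M$; writing these in the standard form of $\cl M$ and using $U$-invariance of the trace to implement $U$ spatially, one extracts a Hilbert space $K$, a trace class of operators $d_{i,j}$, and a candidate unitary $D=\sum\epsilon_{i,j}\otimes d_{i,j}$ on $H\otimes K$, with $\cl N$ generated inside $\cl B(K)$ by the relevant $d_{i,j}^*d_{k,l}$ and carrying a finite trace inherited from $\tau_{\cl M}$. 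The $\cl D'$-modularity of $\Phi$ is what guarantees, via Lemma~\ref{l_symbolc}, that the symbol has matrix coefficients $\tau_{\cl N}(d_{i,l}^*d_{j,k})$, and hence (using Lemma~\ref{l_returns}) that $D$ actually lies in $\cl D\bar\otimes\cl B(K)$ and returns to $\cl N$; trace preservation of $J$ translates into the trace-preservation condition on $D$. The delicate points will be checking that $D$ is genuinely unitary (not merely an isometry or coisometry) --- this uses that $U$ is an \emph{automorphism} and that one is dilating all nonnegative powers simultaneously, mirroring the role of absolute (as opposed to one-step) dilatability in \cite{dlem} --- and handling the convergence of the series defining $D$ and of the traces in the non-separable case, for which I would either reduce to a separable absolute dilation or argue directly with the weak* Haagerup tensor product machinery recalled in the preliminaries.
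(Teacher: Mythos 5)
Your handling of (ii)$\Leftrightarrow$(iii) via matrix coefficients of the operator symbol is correct and matches the paper, and your sketch of (i)$\Rightarrow$(ii) is in the right spirit (the paper realises it by applying the matrix-unit structure theorem \cite[Proposition IV 1.8]{tak1} twice, to $\{J(\epsilon_{i,j})\}$ and to $\{U(J(\epsilon_{i,j}))\}$, which produces the unitary $D$ automatically; note that only the $n=0,1$ cases of the dilation identity are needed there, not all powers). The genuine gap is in your passage from the representation $\Phi=\Phi_D$ back to absolute dilatability. Setting $\cl M=\cl B(H)\bar\otimes\cl N$ and $U=\mathrm{Ad}(D^*)$ does not work: first, $D\in\cl D\bar\otimes\cl B(K)$ and ``returns to $\cl N$'' only guarantees $D^*(\cl B(H)\otimes 1_{\cl N})D\subseteq\cl B(H)\bar\otimes\cl N$, so $\mathrm{Ad}(D^*)$ is in general neither an endomorphism nor an automorphism of $\cl B(H)\bar\otimes\cl N$; it is only a $*$-isomorphism from one subalgebra of $\cl B(H\otimes K)$ onto a \emph{different} one. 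Second, and more fundamentally, even when $\mathrm{Ad}(D^*)$ does preserve $\cl M$, one has $J_1^*\circ\mathrm{Ad}(D^*)^n\circ J=\Phi_{D^n}$, which is not $\Phi_D^n$ for $n\ge 2$ (already for diagonal $D=\sum_i\epsilon_{i,i}\otimes d_i$ this would require $\tau(d_i^{*2}d_j^{2})=\tau(d_i^*d_j)^2$). So ``iterating (\ref{eq_PhiDd})'' fails at the second power; this is exactly the distinction between a one-step factorisation and an absolute dilation.

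The repair is the construction the paper imports from \cite[Theorem 7.1]{dlem}: first upgrade $\mathrm{Ad}(D^*)$ to a trace-preserving $*$-isomorphism $\gamma:\cl B(H)\bar\otimes M\to\cl B(H)\bar\otimes\cl N$ (using the trace-preservation hypothesis and \cite[Proposition IV 1.8]{tak1} to identify $D(\cl B(H)\bar\otimes\cl N)D^*$ with $\cl B(H)\bar\otimes M$ for a finite $(M,\tau_M)$), then form the two-sided infinite tensor product $\cl M=\cl B(H)\bar\otimes M_-\bar\otimes M\bar\otimes\cl N\bar\otimes\cl N_+$ and let $U$ be the automorphism combining $\gamma$ with the bilateral shift. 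Each application of $U$ then conjugates by a fresh leg $\tilde D_{1,k}$, so that $U^nJ(x)$ involves $\tilde D_{1,2}^*\cdots\tilde D_{1,n+1}^*(x\otimes 1)\tilde D_{1,n+1}\cdots\tilde D_{1,2}$ and slicing by the trace recovers $\Phi^n(x)$ rather than $\Phi_{D^n}(x)$. Without this (or an equivalent device), your implication (iii)$\Rightarrow$(i) establishes only that $\Phi$ is factorisable in the one-step sense, not that it is absolutely dilatable.
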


\begin{proof}
(i) $\Rightarrow $ (ii) 
Let $ \Phi : \cl B(H)\to \cl B(H)$ be $\cl D'$-modular and absolutely dilatable. 
Thus, there exists a tracial von Neumann algebra $(\cl M,\tau_{\cl M})$, 
a trace preserving unital weak* continuous *-homomorphism $J : \cl B(H) \rightarrow \cl M$ and a trace preserving unital *-automorphism $U : \cl M \rightarrow \cl M$, such that 
 $ \Phi^{n}= J_{1}^{*} U^{n} J$ for every $ n \in \bb N$. 
 Let  $ \{\epsilon_{i,j}\}_{i,j \in \bb{I}} \subseteq  \B(H)$ be a matrix unit system arising from an orthonormal basis,
 and note that $\{ J(\epsilon_{i,j})\}_{i,j\in \bb{I}}$ is a matrix unit system in $ \cl M$. 
Fix $ i_0 \in \bb{I} $ and set $q = J(\epsilon_{i_0,i_0})$; then 
 \[ m_{i,j} := J(\epsilon_{i_0,i})mJ(\epsilon_{j,i_0})  \]
is in $ q \cl M q$ for all $ i,j$, $ m \in \cl M$, and the map
$$    \rho : \cl M  \rightarrow \cl B(H) \bar \otimes (q \cl M q), \ 
     m \mapsto \sum_{i,j\in \bb{I}} \epsilon_{i,j} \otimes m_{i,j},$$
is a *-isomorphism \cite[Proposition IV 1.8]{tak1} 
(we note that the series converges in the weak* topology). 
The restriction $ \tau_{1}$ of $ \tau_{\cl M}$ to $ q \cl M q$ is a semi-finite trace. 
We equip $B(H) \bar \otimes (q \cl M q)$ with the semi-finite trace $ \tr \otimes \tau_{1}$, and observe that $ \rho $ is trace-preserving. 
Indeed, if $ m \in \cl M^{+}$ then 
\begin{eqnarray*}
(\tr \otimes \tau_{1})(\rho(m)) 
& = & \sum_{i\in \bb{I}} \tau_{\cl M}(m_{i,i})
=
\sum_{i\in \bb{I}} \tau_{\cl M}(J(\epsilon_{i_0,i})mJ(\epsilon_{i,i_0}))\\
& = & 
\sum_{i\in \bb{I}} \tau_{\cl M}(J(\epsilon_{i,i_0})J(\epsilon_{i_0,i})m)
= \sum_{i\in \bb{I}} \tau_{\cl M}(J(\epsilon_{i,i})m)\\
& = & 
\tau_{\cl M}\left(\sum_{i\in \bb{I}}J(\epsilon_{i,i})m\right)
= \tau_{\cl M}(m).
\end{eqnarray*}
Note also that $ \rho(J(\epsilon_{i,j})) = \epsilon_{i,j} \otimes q$ for all $i,j \in \bb{I}$. It follows from the weak* continuity of $ J$ and $\rho$ that $ \rho (J(x)) = x\otimes q$ for all $ x\in \cl B(H)$ and, since $ J$  is trace-preserving,
\begin{align*}
\tau_{1}(q)   = \tau_{\cl M} ( J(\epsilon_{i_0,i_0})) = \tr( \epsilon_{i_0,i_0} ) =1.
\end{align*}

By setting $ \cl N_{1} = q \cl M q$ and changing $ J$ into $ \rho \circ J$ and $ U$ into $ \rho \circ U \circ \rho^{-1}$ we assume that 
\begin{equation} \label{e_vnaM}
     \cl M = \cl B (H) \bar \otimes \cl N_{1},
\end{equation}
where $ (\cl N_{1}, \tau_{1})$ is a finite tracial von Neumann algebra and 
$ J (x) = x \otimes 1_{\cl N_{1}}$ for $ x\in \cl B(H)$. Then $J_1^*: \cl B(H)\bar\otimes \cl N_1\to \cl B(H)$ is given by $J_1^*=\text{id}\otimes\tau_{1}$. 

Consider $ \cl N_{1} \subseteq \cl B(H_{1})$, so that  
\[\cl M = \cl B(H) \bar \otimes \cl N_{1} \subseteq \cl B( H \otimes_{} H_{1} ) \]
as a von Neumann subalgebra. Use \cite[Proposition IV 1.8]{tak1} again, 
observing that $ \{ U(\epsilon_{i,j}\otimes 1_{\cl N_{1}})\}$ is a matrix unit system in 
$ \cl M$, set $ q_0 = U(\epsilon_{i_0,i_0} \otimes 1_{\cl N_{1}})$, $ H_2 = q_{0}(H \otimes H_{1})$ and $ \cl N_{2} = q_0 \cl M q_0$, and equip $ \cl N_{2}$ with the restriction $ \tau_{2}$ of the trace $ \tau_{\cl M}$. Note that $\tau_{2}(1_{\cl N_{2}}) = 1$.
As before, let 
$\pi : \cl B(H) \bar \otimes \cl N_{1} \rightarrow \cl B(H) \bar \otimes \cl N_{2}$
be a trace-preserving *-isomorphism, such that 
\begin{equation} \label{e_piu}
    \left(\pi \circ U\right)(z \otimes 1_{\cl N_{1}})= z \otimes 1_{\cl N_{2}}.
\end{equation}
We have that $ \cl N_{2} \subseteq \cl B(H_{2})$, so that 
$\cl B(H) \bar \otimes \cl N_{2} \subseteq \cl B(H \otimes_{} H_{2}) $. 
In addition, the *-isomorphism $ \pi$ 
appearing in the proof of 
\cite[Proposition IV 1.8]{tak1} has the form 
\begin{align} \label{e_piconj}
    \pi( Y) = D Y D^*, \quad Y \in \cl B(H) \bar \otimes \cl N_{1},
\end{align}
where $D : H\otimes_{} H_{1} \to H \otimes_{} H_{2}$ is unitary. 
By (\ref{e_piu}) and (\ref{e_piconj}), 
\begin{equation} \label{e_formula} 
    U(z \otimes 1_{\cl N_{1}}) = D^{*}(z\otimes 1_{\cl N_{2}})D, \quad z\in \cl B(H).
\end{equation}
Therefore, 
\begin{equation}\label{eq_Phimod}
\Phi(x) = (\id \otimes \tau_{\cl N_{1}})\big(D^{*}(x \otimes 1_{\cl N_{2}})D \big), 
\ \ \ x\in \cl B(H),
\end{equation}
for the finite tracial von Neumann algebra $(\cl N_{2},\tau_{2})$ and the unitary 
operator
$D \in \cl B(H ) \bar \otimes \cl B(H_{1},H_{2})$
(we view $B(H_{1},H_{2}) $ as a weak* closed subspace of $ \cl B(H_{1} \oplus H_{2})$).  

Let $ v \in (\cl B(H_{1},H_{2}))_{*}$ be a normal functional; thus, $L_{v}(D) \in \cl B(H)$. We will show that 
$L_{v}(D) \in \cl D $; as $ v $ is arbitrary, 
the slice map property \cite{kraus} will imply that $ D \in \cl D \bar \otimes \cl B(H_{1},H_{2})$. 
Let $ u \in \cl D'$ be unitary; it suffices to show that 
$L_{v}(D)u = uL_{v}(D)$. Since
$$L_{v}(D)u = L_{v}\big(D (u \otimes I_{H_{1}})\big) 
\ \mbox{ and } \ 
uL_{v}(D)  = L_{v}\big( (u \otimes I_{H_{2}} ) D \big),$$
it suffices to show that 
\begin{align} \label{e_intertwining}
    (u \otimes I_{H_{2}} ) D =D (u \otimes I_{H_{1}}).
\end{align}
Set
\begin{align*}
D_{u}:= (u \otimes I_{H_{2}} ) D - D (u \otimes I_{H_{1}}),
\end{align*}
and note that 
$$D_{u}^{*}D_{u}= 2 (I_{H} \otimes I_{H_{1}}) -  D^{*}(u^{*}\otimes I_{H_{2}})D(u\otimes I_{H_{1}}) - (u^{*}\otimes I_{H_{1}})D^{*}(u\otimes I_{H_{2}})D.$$   
By (\ref{e_formula}), 
\begin{equation*}
D^{*}(u^{*}\otimes I_{H_{2}})D(u\otimes I_{H_{1}}) = U(u^{*}\otimes I_{H_{1}}) \cdot (u \otimes I_{H_{1}}) \in \cl B(H) \bar \otimes \cl N_{1},
\end{equation*}
and 
\begin{equation*}
(u^{*}\otimes I_{H_{1}})D^{*}(u\otimes I_{H_{2}})D= (u^{*} \otimes I_{H_{1}}) \cdot U(u\otimes I_{H_{1}})  \in \cl B(H) \bar \otimes \cl N_{1}.
\end{equation*}
Let $ \omega \in \cl S_1(H)$; 
then by (\ref{e_formula}) and the modularity of $ \Phi$, 
we have 
\begin{eqnarray}\label{e_tracea}
&&\sca{D^{*}(u^{*}\otimes I_{H_{2}})D(u\otimes I_{H_{1}}), \omega \otimes I_{H_{1}} }_{\cl M, \cl M_{*}} \nonumber\\
&& =\sca{ U(u^{*} \otimes I_{H_{1}}) \cdot (u\otimes I_{H_{1}}), \omega \otimes I_{H_{1}} }_{\cl M, \cl M_{*}} \nonumber\\ 
&&= \sca{ UJ(u^{*} ) \cdot J(u), J_{1}(\omega) }_{\cl M, \cl M_{*}} 
= \tau_{\cl M}\big( UJ(u^{*} ) \cdot J(u) \cdot J_{1}(\omega) \big) \nonumber\\
     &&=\tau_{\cl M}\big( UJ(u^{*}) \cdot J(u) \cdot J(\omega) \big)
=  \tau_{\cl M}\big( UJ(u^{*} ) \cdot J(u\omega) \big) \\
  &&   = \tau_{\cl M}\big( UJ(u^{*} ) \cdot J_{1}(u\omega) \big)
= \sca{ UJ(u^{*} ) , J_{1}(u \omega)  }_{\cl M, \cl M_{*}}\nonumber\\
     &&=  \sca{ J_{1}^{*} UJ(u^{*} ), u\omega  }_{\cl B(H), \cl S_{1}(H)}
 = \sca{ \Phi (u^{*} ), u\omega  }_{\cl B(H), \cl S_{1}(H)} \nonumber\\
 && = {\rm tr}(\Phi(u^{*})u\omega)
 = {\rm tr}(\Phi(u^{*}u)\omega) 
  = \tr(\omega).\nonumber
\end{eqnarray}
Similarly, 
\begin{eqnarray} \label{e_traceb}
    &&\sca{ (u^{*} \otimes I_{H_{1}}) \cdot U(u\otimes I_{H_{1}}), \omega \otimes I_{H_{1}} }_{\cl M, \cl M_{*}}\nonumber\\
&&=\sca{   U(u\otimes I_{H_{1}}), \omega u^{*}\otimes I_{H_{1}} }_{\cl M, \cl M_{*}}
= \sca{   J_{1}^{*}UJ(u), \omega u^{*}}_{\cl B(H), \cl S^{1}(H)}  \\
&&= \sca{   \Phi(u), \omega u^{*}}_{\cl B(H), \cl S^{1}(H)} 
=\tr(\omega). \nonumber
\end{eqnarray} 
Combining (\ref{e_tracea}) and (\ref{e_traceb}), we have that 
\begin{align*}
    \sca{D^{*}_{u}D_{u}, \omega \otimes I_{H_{1}}}_{\cl M,\cl M_{*}} = 0, \quad \omega \in \cl S_{1}(H),
\end{align*}
which in turn implies that, if $ \omega \in \cl S_{1}(H)^{+}$, then
$$    0 = \sca{D^{*}_{u}D_{u}, \omega \otimes I_{H_{1}}}_{\cl M,\cl M_{*}}
    = \sca{ L_{\omega}(D^{*}_{u}D_{u}), I_{H_{1}} }_{\cl N_1,(\cl N_1)_{*}} 
    = \tau_{\cl N_1}\big( L_{\omega}(D^{*}_{u}D_{u}) \big). $$
Finally, the faithfulness of the trace implies that 
\begin{align*}
    L_{\omega}(D^{*}_{u}D_{u})=0, \text{ for all } \omega \in \cl S_{1}(H)^{+},
\end{align*}
which further implies that $D^{*}_{u}D_{u} =0$, as desired.

Let $H_{3}$ be a Hilbert space of sufficiently large dimension, 
so that $ H_{1} \otimes_{} H_{3} \cong H_{2} \otimes_{} H_{3}$ via a unitary $V : H_{1} \otimes_{} H_{3} \to H_{2} \otimes_{} H_{3} $. 
Set 
    \[ D_{1} = ( 1_{\cl D} \otimes V^{*})(D \otimes I_{H_{3}}) \in \cl D \bar \otimes \cl B(H_{1}\otimes_{}H_{3})\] 
and replace the von Neumann algebra $\cl N_{1}$ with $ \cl N= \cl N_{1} \otimes I_{H_{3}} \subseteq \cl B(H_{1} \otimes_{} H_{3})$. 
Letting $ K= H_{1} \otimes_{} H_{3}$, we have that $ \cl N \subseteq \cl B(K)$.
Letting $x = \epsilon_{i,j}$ in (\ref{e_formula}) and setting $d_{i,j} = L_{\epsilon_{j,i}}(D_{1})$, $i,j\in \bb{I}$, 
we obtain 
 \[ d_{i,k}^{*}d_{l,j} \in \cl N_{}, \quad i,j,k,l \in \bb{I}, \]
which implies that $ D_{ 1}$ returns to $\cl N$ 
(see Lemma \ref{l_returns}). 
Thus $ D_{1}^*(x \otimes 1_{\cl N})D_{ 1} \in \cl B(H) \bar \otimes \cl N$ and
therefore,
\begin{equation*}
    \Phi(x) = (\id \otimes \tau_{\cl N})\big( D_{ 1}^*(x \otimes 1_{\cl N})D_{ 1} \big)
\end{equation*}
for a unitary $D_{ 1} \in \cl D \bar \otimes \cl B(K)$,  where $\tau_{\cl N}(a\otimes I_{H_3})=\tau_{1}(a)$, $a\in\cl N_1$. Finally, 
note that, for any $ i \in \bb I$, we have 
\begin{align*}
    D_1^{*}(\epsilon_{i,i}\otimes 1_{\cl N})D_1 = D^*(\epsilon_{i,i}\otimes I_{H_{2}})D \otimes I_{H_{3}}= \pi^{-1}(\epsilon_{i,i}\otimes I_{H_{2}}) \otimes I_{H_{3}}
\end{align*}
which, since $ \pi$ is a trace preserving *-automorphism, implies that 
\begin{align*}
    (\tr \otimes \tau_{\cl N})\big( D_{1}^{*}( \epsilon_{i,i} \otimes I_{\cl N})D_{1} \big)= \tr  \otimes \tau_{{1}}(\pi^{-1}(\epsilon_{i,i} \otimes I_{H_{2}})) = 1;
\end{align*}
hence $D_1$ satisfies trace preservation. 
We have thus proved statement (ii) with $ D_1$ in place of $ D$.

(ii) $\Rightarrow $ (i) 
Note first that since $ D \in \cl D \bar \otimes \cl B(K)$ returns to $ \cl N$ we have that 
$$ D^{*}(x \otimes 1_{\cl N})D \in \cl B(H) \bar \otimes \cl N, $$
 for all $ x\in \cl B(H) $.
We check the $\cl D'$-modularity of $\Phi$; 
let $ a\in \cl D'$ and $ x\in \cl B(H)$.
For (finite) sums 
$d = \sum_{i=1}^k c_{i}\otimes m_{i}$ and $d'= \sum_{j=1}^k c_{j}'\otimes m_{j}'$, with $ c_{i},c_{j}' \in \cl D$ and $ m_{i}, m_{j}' \in \cl B(K)$, $i = 1,\dots,k$, we have
\begin{align*}
d^*(ax\otimes 1_{\cl N})d' 
& = \sum_{i,j=1}^k c_{i}^{*}axc_{j}'\otimes m_{i}^*m_{j}'  
=\sum_{i,j=1}^k ac_{i}^{*}xc_{j}'\otimes m_{i}^*m_{j}' \\
& = (a\otimes 1_{\cl N})\left(\hspace{-0.05cm}\sum_{i,j=1}^k c_{i}^{*}xc_{j}'\otimes m_{i}^*m_{j}'\hspace{-0.1cm}\right) 
\hspace{-0.05cm} = \hspace{-0.05cm} (a\hspace{-0.05cm}\otimes \hspace{-0.05cm} 1_{\cl N} )
\left(\hspace{-0.05cm}d^*(x\otimes 1_{\cl N})d'\hspace{-0.05cm}\right)\hspace{-0.05cm}.
\end{align*}
Approximation in the weak* topology implies
\[  D^*(ax\otimes 1_{\cl N})D = (a\otimes 1_{\cl N} ) \big( D^*(x\otimes 1_{\cl N})D\big). \]
We have
\begin{align*}
    (\id \otimes \tau_{\cl N}) \big( D^*(ax\otimes 1_{\cl N})D \big) 
    & = (\id \otimes \tau_{\cl N})(a\otimes 1_{\cl N} ) \big( D^*(x\otimes 1_{\cl N})D\big)= \\
     & = a (\id \otimes \tau_{\cl N}) \big( D^*(x\otimes 1_{\cl N})D\big) 
\end{align*}
which
implies the left $\cl D'$-modularity of $\Phi$. The right $\cl D'$-modularity 
follows by symmetry. 

Turning to absolute dilatability, we have that the map
\begin{align*}
    \beta : \B(H \otimes K) & \rightarrow \B(H\otimes K), \ 
    z \mapsto D^{*}z D, 
\end{align*}
is a trace preserving unital *-automorphism.
Note that 
\begin{align*}
    \B(H) \otimes 1_{\cl N} \subseteq D\big( \B(H)\bar \otimes \cl N \big) D^*\subseteq \B(H) \bar \otimes \cl \B(K).
\end{align*} 
Since $D$ satisfies trace preservation, there exists a
matrix unit system $\{\epsilon_{i,j}\}_{i,j \in \bb I} \subseteq \B(H)$ 
arising from an orthonormal basis, and an index $i_0\in \bb{I}$, such that 
\begin{equation}\label{eq_i0i0}
(\tr \otimes \tau_{\cl N})\big(D^{*}(\epsilon_{i_0,i_0} \otimes 1_{\cl N})D\big)=1.
\end{equation}
We have that 
$ \{\epsilon_{i,j} \otimes 1_{\cl N} \}_{i,j \in \bb I}$ is a matrix unit system in 
the von Neumann algebra
$\beta^{-1}(\B(H)\bar \otimes \cl N) = D\big( \B(H)\bar \otimes \cl N \big) D^* $;
by 
\cite[Proposition IV 1.8]{tak1}, there exists a von Neumann algebra $M$ such that $\beta^{-1}(\B(H)\bar \otimes \cl N) \cong \B(H)\bar \otimes M$. Set $q:= \epsilon_{i_0,i_0} \otimes 1_{\cl N}$; then 
$ M \cong q  \beta^{-1}(\B(H)\bar \otimes \cl N)q $ and the  *-isomorphism in \cite{tak1}
has the form 
\begin{align*}
    \rho : \beta^{-1}(\B(H)\bar \otimes \cl N) & \rightarrow \cl B(H) \bar \otimes M, 
    \ 
     z \mapsto \sum_{i,j\in \bb{I}} \epsilon_{i,j} \otimes z_{i,j}, 
\end{align*}
where 
\begin{align*}
    z_{i,j}:= (\epsilon_{i_0,i} \otimes 1_{\cl N})z(\epsilon_{j,i_0} \otimes 1_{\cl N}) \in M, \quad i,j \in \bb I.
\end{align*}
Equip $\beta^{-1}(\B(H)\bar \otimes \cl N)$ and $M $ with  $\tau_{\beta}$ and $ \tau_{M}$, respectively, which are the restrictions of $ (\tr\bar \otimes \tau_{\cl N}) \circ \beta$. 
Further, equip $ \cl B(H)\bar \otimes M$ with $ \tr \bar\otimes \tau_{M}$
and note that these traces are all semi-finite.

We observe that $(\tr \otimes \tau_{M})(\rho(z)) = \tau_{\beta}(z)$, for all $ z \in \beta^{-1}(\B(H)\bar \otimes \cl N)$. Indeed,
\begin{align*}
(\tr \otimes \tau_{M})(\rho(z)) 
&= \sum_{i\in \bb I} \tau_{ M}(z_{i,i})\\
& =\sum_{i\in \bb I} \tau_{ \beta}((\epsilon_{i_0,i} \otimes 1_{\cl N})z(\epsilon_{i,i_0} \otimes 1_{\cl N}) \\
& =\sum_{i \in \bb I} \tau_{ \beta}((\epsilon_{i,i_0} \otimes 1_{\cl N})(\epsilon_{i_0,i} \otimes 1_{\cl N})z)\\
&= \sum_{i \in \bb I} \tau_{ \beta}((\epsilon_{i,i} \otimes 1_{\cl N})z)
=  \tau_{\beta}\left(\sum_{i\in \bb I}(\epsilon_{i,i} \otimes 1_{\cl N})z\right)
= \tau_{ \beta}(z).
\end{align*}
Moreover,  $\rho (\epsilon_{i,j} \otimes 1_{\cl N})=  \epsilon_{i,j}\otimes q$ for all $ i,j \in \bb I$ and thus $ \rho(x \otimes 1_{\cl N}) = x \otimes q$ for all $ x\in \B(H)$.
In addition, by (\ref{eq_i0i0}), 
\begin{align*}
    \tau_{M}(q) = \tau_{ M}(\epsilon_{i_0,i_0} \otimes 1_{\cl N}) = 1,
\end{align*}
which implies that $(M, \tau_{M})$ is a finite von Neumann algebra. 
Noting that the identity $ 1_{M}$ of $M$ can be 
identified with the projection $q$, for all $ z  \in \beta^{-1}(\cl B(H)\bar\otimes\cl N)$, we have
\begin{align*}
    (\tr \otimes \tau_{\cl N})(D^{*}z D) = \tau_{\beta}(z) = 
    (\tr\otimes\tau_{M})(\rho(z)).
\end{align*} 
The isomorphism $\rho:\beta^{-1}(\cl B(H)\bar\otimes\cl N)\to\cl B(H)\bar\otimes M$ is implemented by a unitary $W$, 
that is, $\rho(z)=W^*zW$
(see the proof of \cite[Proposition IV 1.8]{tak1}). As $\rho(x\otimes 1_{\cl N})=x\otimes 1_M$, $x\in \cl B(H)$, we obtain that $W$ is of the form $W=I_{\cl B(H)}\otimes V$, for a unitary $V$. Let $\tilde D=W^*D$. We obtain 
the trace-preserving *-isomorphism 
\begin{align*}
    \gamma_{} : \B(H)\bar \otimes M \to \B(H) \bar \otimes \cl N, \ 
                       z \mapsto {\tilde D}^* z\tilde D, 
\end{align*}
that has the property 
${\tilde D}^*(x\otimes 1_M)\tilde D=D^*(x\otimes 1_{\cl N})D$, $x\in\cl B(H)$.  

We now proceed as in \cite[Theorem 7.1]{dlem}. Set 
\begin{align*}
    M_{-}= \bar \otimes_{k\leq -1}M \quad \text{and} \quad \cl N_{+}= \bar \otimes_{k \geq 1} \cl N, 
\end{align*}
and define 
\begin{align*}
    \cl M = \B(H) \bar \otimes M_{-} \bar \otimes M \bar \otimes \cl N \bar \otimes \cl N_{+},
\end{align*}
 equipped with the product trace. Set 
\begin{align*}
    \cl M_{1}=  M_{-} \bar \otimes \big (\B(H) \bar \otimes M \big ) \bar \otimes \cl N \bar \otimes \cl N_{+}  \quad \text{and} \quad \cl M_{2}=  M_{-} \bar \otimes M \bar \otimes \big( \B(H) \bar \otimes  \cl N \big) \bar \otimes \cl N_{+}
\end{align*}
and let 
$ \kappa_{1} : \cl M \to \cl M_1$ and $ \kappa_{2} : \cl M \to \cl M_{2}$ 
be the *-isomorphisms, arising from the corresponding flips. 
Further, consider the shifts 
\begin{align*}
    \sigma_{-} : M_{-} \to M_{-}\bar \otimes M, \ 
                   \otimes_{k \leq -1}x_{k} \mapsto ( \otimes_{k \leq -1}x_{k-1}) \otimes x_{-1}
\end{align*}
and 
\begin{align*}
    \sigma_{+} : \cl N \bar \otimes \cl N_{+} \to \cl N_{+}, \ 
                       x_{0} \otimes ( \otimes_{k \geq 1}x_{k})  \mapsto  \otimes_{k \geq 1}x_{k-1}
\end{align*}
which are trace-preserving *-isomorphisms. Set 
\begin{align*}
     U : \cl M \to \cl M; \quad U= \kappa_{2}^{-1} \circ ( \sigma_{-} \bar \otimes \gamma \bar \otimes \sigma_{+}) \circ \kappa_{1}, 
\end{align*}
and note that $U$ is a trace preserving *-automorphism. Finally, let 
 \begin{align*}
     J: \B(H) \to \cl M, \ 
           x \mapsto x \otimes 1,
 \end{align*}
where $ 1$ denotes the unit of $ M_{-} \bar \otimes M \bar \otimes \cl N \bar \otimes \cl N_{+}$.
Our goal is to show that, for all $ x\in \cl B(H)$ and $ y \in \cl S_{1}(H)$, we have that 
\begin{equation} \label{e_casen1}
    \sca{J_{1}^{*} U^{n} J( x), y }_{ \cl B(H), \cl S_{1}(H)} = \sca{ \Phi^{n}(x), y }_{\cl B(H), \cl S_{1}(H)}.
\end{equation}

Consider first the case $n=1$. The right-hand side of (\ref{e_casen1}) then becomes
\begin{multline*}
    \sca{ \Phi^{}(x), y }_{\cl B(H), \cl S_{1}(H)} 
    = \sca{(\id_{} \otimes\tau_{\cl N})\big(D^*(x\otimes 1_{\cl N})D\big) , y }_{\cl B(H), \cl S_{1}(H)}\\
    = \tr\big( (\id_{} \otimes\tau_{\cl N})(D^*(x\otimes 1_{\cl N})D) \cdot y \big).
\end{multline*}
On the other hand, 
\begin{align} \label{e_induct1}
    UJ(x) & = U(x \otimes 1)= \nonumber \\
          & =  \kappa_{2}^{-1} \circ ( \sigma_{-} \bar \otimes \gamma \bar \otimes \sigma_{+}) \big( 1_{M_{-}} \otimes ( x \otimes 1_{M}) \otimes 1_{\cl N} \otimes 1_{\cl N_{+}} \big)= \nonumber \\
          & =  \kappa_{2}^{-1} \big( 1_{M_{-}} \otimes 1_{M} \otimes \big(  \tilde D^*( x \otimes 1_{M})\tilde D \big) \otimes 1_{\cl N_{+}} \big).
\end{align}
Let 
$\left(\sum_{i=1}^{n_{\alpha}} x_{\alpha,i}\otimes y_{\alpha,i}\right)_{\alpha\in \bb{A}}$,  where $x_{\alpha,i}\in\cl B(H)$ and $y_{\alpha,i}\in \cl N$, be a net, 
converging to $\tilde D^*(x\otimes 1_{M})\tilde D$ in weak* topology.
Using the weak* continuity of $ \kappa_{2}^{-1}$, we obtain 
(all the limits below being in the weak* topology)
\begin{align*}
    UJ(x)   & =  \kappa_{2}^{-1} \left(1_{M_{-}} \otimes 1_{M} \otimes 
    \left(\lim_{\alpha\in \bb{A}} \sum_{i=1}^{n_{\alpha}} x_{\alpha, i}\otimes y_{\alpha,i}\right) \otimes 1_{\cl N_{+}} \right) \\
& =  \lim_{\alpha\in \bb{A}} \sum_{i=1}^{n_{\alpha}} 
\kappa_{2}^{-1} \left( 1_{M_{-}} \otimes 1_{M} \otimes 
\left(x_{\alpha,i}\otimes y_{\alpha,i} \right) \otimes 1_{\cl N_{+}} \right) \\
& =  \lim_{\alpha\in \bb{A}} \sum_{i=1}^{n_{\alpha}}  
    \left( x_{\alpha,i} \otimes  1_{M_{-}} \otimes 1_{M} \otimes 
    y_{\alpha,i}  \otimes 1_{\cl N_{+}} \right). 
\end{align*}
Hence 
 \begin{eqnarray*}
&&\sca{J_{1}^{*} U^{} J( x), y }_{\cl \cl B(H), \cl S_{1}(H)} 
=\sca{ U^{} J( x), J_{1}(y )}_{\cl M, \cl M_{*}}\\
&& = \lim_{\alpha\in \bb{A}} \sum_{i=1}^{n_{\alpha}} 
\left\langle\left(x_{\alpha,i} \otimes  1_{M_{-}} \otimes 1_{M} 
\otimes y_{\alpha,i}  \otimes 1_{\cl N_{+}} \right)\hspace{-0.1cm}, 
y \hspace{-0.05cm}\otimes \hspace{-0.05cm} 1_{M_{-}} 
\hspace{-0.05cm} \otimes \hspace{-0.05cm} 1_{M} \hspace{-0.05cm} 
\otimes \hspace{-0.05cm} 1_{\cl N} \hspace{-0.05cm}  \otimes 
\hspace{-0.05cm} 1_{\cl N_{+}}\right\rangle
\\
&& = \lim_{\alpha\in \bb{A}} \sum_{i=1}^{n_{\alpha}} 
\tr(x_{\alpha,i} y) \tau_{\cl N}(y_{\alpha,i})
= \lim_{\alpha\in \bb{A}} \sum_{i=1}^{n_{\alpha}} 
(\tr \otimes \tau_{\cl N})
\left((x_{\alpha,i} \otimes y_{\alpha,i})(y \otimes 1_{\cl N})\right)\\
&& = (\tr \otimes \tau_{\cl N}) 
\left(   \tilde D^*(x\otimes 1_{M})\tilde D \cdot  (y \otimes 1_{\cl N}) \right). 
\end{eqnarray*}
Identity (\ref{e_casen1}) will therefore follow
from the fact that
\begin{equation}\label{eq_zyin}
\tr\left( (\id_{}\otimes\tau_{\cl N})(z) \cdot y \right) 
=(\tr \otimes \tau_{\cl N^{}}) \left( z\cdot (y\otimes 1_{\cl N}) \right), \ \ z\in \cl B(H)\bar \otimes \cl N,
\end{equation} 
which is straightforward to verify.

We next show that, if $n\in \bb{N}$ and $x\in \cl B(H)$, then 
\begin{eqnarray}\label{eq_Unind}
&& 
U^{n}J(x) = \\ 
&& 
\kappa_{2}^{-1}\Big( 1_{M_{-}} 
\hspace{-0.05cm}\otimes \hspace{-0.05cm} 1_{M} 
\hspace{-0.05cm}\otimes \hspace{-0.05cm}
\left( \tilde D_{1,2}^*\ldots \tilde D_{1,n+1}^*
\left(x 
\hspace{-0.05cm} \otimes \hspace{-0.05cm}
1_{M}^{(n)}\right)
\tilde D_{1,n+1}\ldots \tilde D_{1,2} \right) 
\hspace{-0.05cm}\otimes \hspace{-0.05cm}
\underset{\mathclap{\substack{\uparrow \\ n }}}{1_{\cl N}} \hspace{-0.05cm} \otimes\hspace{-0.05cm}\cdots \Big), \nonumber
\end{eqnarray}
where we use leg-notation $\tilde D_{1,k}$ to denote the operator 
acting as $\tilde D$ on the first and $k$-th component of the tensor product. 
Here $ 1_{M}^{(k)}$ denotes the element 
    $ \underbrace{1_{M} \otimes \cdots \otimes 1_{M}}_{k \ \mbox{\tiny times}}$. 
The case $n = 1$ was already shown in (\ref{e_induct1}).
We use induction; assuming that (\ref{eq_Unind}) holds,
let $x_{\alpha, i}\in\cl B(H)$, $y_{\alpha,i}\in \cl N^{\otimes n}$, 
$\alpha\in \bb{A}$, $i = 1,\dots,n_{\alpha}$, be such that 
$$\tilde D_{1,2}^*\ldots \tilde D_{1,n+1}^*\left(x 
\hspace{-0.05cm} \otimes \hspace{-0.05cm}
1_{M}^{(n)}\right)\tilde D_{1,n+1}\ldots \tilde D_{1,2}
= \lim_{\alpha\in \bb{A}} \sum_{i=1}^{n_{\alpha}} 
x_{\alpha, i} \hspace{-0.05cm} \otimes \hspace{-0.05cm} y_{\alpha, i}$$
in weak* topology. 
The weak* continuity of the flip $M\bar\otimes\cl B(H)\to 
\cl B(H)\bar\otimes M$ implies that 
$$\tilde D_{1,3}^*\ldots \tilde D_{1,n+2}^*\left(x 
\hspace{-0.05cm} \otimes \hspace{-0.05cm}
1_{M}^{(n+1)}\right)\tilde D_{1,n+2}\ldots \tilde D_{1,3}
= \lim_{\alpha\in \bb{A}} \sum_{i=1}^{n_{\alpha}} 
x_{\alpha, i} \hspace{-0.05cm} \otimes \hspace{-0.05cm}
1_{M} \hspace{-0.05cm} \otimes \hspace{-0.05cm} y_{\alpha, i}$$
in weak* topology, and hence
\begin{align*}
& U^{n+1}J(x)  = U\left(\lim_{\alpha\in \bb{A}} \sum_{i=1}^{n_{\alpha}}  
x_{\alpha, i} \otimes  1_{M_{-}} \otimes 1_{M} \otimes y_{\alpha,i} \otimes 1_{\cl N_{+}} \right) \\
&=  \lim_{\alpha\in \bb{A}} \sum_{i=1}^{n_{\alpha}}  
U\left( x_{\alpha,i} \otimes  1_{M_{-}} \otimes 1_{M} \otimes y_{\alpha,i} 
\otimes 1_{\cl N_{+}} \right)\\
&= \lim_{\alpha\in \bb{A}} \sum_{i=1}^{n_{\alpha}} 
\kappa_{2}^{-1} \circ (\sigma_{-}\bar \otimes \gamma \bar \otimes \sigma_{+})\Big( 1_{M_{-}} \otimes (x_{\alpha, i}\otimes 1_{M}) \otimes y_{\alpha,i} \otimes 1_{\cl N_{+}} \Big)\\
&= \lim_{\alpha\in \bb{A}} \sum_{i=1}^{n_{\alpha}} 
\kappa_{2}^{-1} \Big(1_{M_{-}} \otimes 1_{M} \otimes 
\big(\tilde  D_{}^{*}(x_{\alpha,i} \otimes 1_{M})\tilde D \big)\otimes y_{\alpha,i} \otimes \underset{\mathclap{\substack{\uparrow \\ n }}}{1_{\cl N}}\otimes \cdots \Big) \\
&= \lim_{\alpha\in \bb{A}} \sum_{i=1}^{n_{\alpha}} 
\kappa_{2}^{-1} \Big(1_{M_{-}} \hspace{-0.05cm}\otimes\hspace{-0.05cm} 1_{M} 
\hspace{-0.05cm}\otimes \hspace{-0.05cm}
\big( \tilde D_{1,2}^{*}(x_{\alpha,i} 
\hspace{-0.05cm}\otimes \hspace{-0.05cm} 1_{M}\otimes\hspace{-0.05cm} y_{\alpha,i})
\tilde D_{1,2} \big) 
\hspace{-0.05cm}\otimes\hspace{-0.05cm} \underset{\mathclap{\substack{\uparrow \\ n+1 }}}{1_{\cl N}}
\hspace{-0.05cm}\otimes \hspace{-0.05cm}\cdots \Big) \\
& = \kappa_{2}^{-1} \Big(1_{M_{-}} 
\hspace{-0.1cm} \otimes \hspace{-0.1cm}  1_{M} \hspace{-0.1cm} \otimes \hspace{-0.1cm} 
\big( \tilde D_{1,2}^{*}\dots \tilde D_{1,n+2}^{*}(x_{} 
\hspace{-0.1cm} \otimes \hspace{-0.1cm} 1_{M}^{(n+1)})
\tilde D_{1,n+2}\dots \tilde D_{1,2} \big) 
\hspace{-0.1cm} \otimes \hspace{-0.1cm} \underset{\mathclap{\substack{\uparrow \\ n+1 }}}{1_{\cl N}}
\hspace{-0.1cm} \otimes \hspace{-0.05cm}  \cdots \Big).
\end{align*}
We claim that, on the other hand,
\begin{equation}\label{eq_Phiin}
\Phi^n(x)=(\id\otimes\underbrace{\tau_{\cl N}\otimes\ldots\otimes\tau_{\cl N}}_n)(\tilde D_{1,2}^*\cdots \tilde D_{1,n+1}^*(x\otimes 1_{M}^{(n)})\tilde D_{1,n+1}\cdots \tilde D_{1,2}).
\end{equation}
Using induction again, we note that the formula
was established already for $n = 1$, and assume (\ref{eq_Phiin}).
Then 
\begin{eqnarray*}
&&\Phi^{n+1}(x)
= 
\Phi((\id\otimes\tau_{\cl N}^{\otimes n})(\tilde D_{1,2}^*\cdots \tilde D_{1,n+1}^*(x\otimes 1_{M}^{(n)})\tilde D_{1,n+1}\cdots \tilde D_{1,2})\\
&&=
\lim_{\alpha\in \bb{A}} \sum_{i=1}^{n_{\alpha}}
\Phi((\id\otimes\tau_{\cl N}^{\otimes n})(x_{\alpha,i}\otimes y_{\alpha,i}))
=
\lim_{\alpha\in \bb{A}} \sum_{i=1}^{n_{\alpha}}
\Phi(x_{\alpha,i} \tau_{\cl N}^{\otimes n}(y_{\alpha,i}))\\
&&=
\lim_{\alpha\in \bb{A}} \sum_{i=1}^{n_{\alpha}}
(\id\otimes\tau_{\cl N})(D^*(x_{\alpha,i}\otimes 1_{\cl N})D) 
\hspace{0.05cm}\tau_{\cl N}^{\otimes n}(y_{\alpha,i}))\\
&&=
\lim_{\alpha\in \bb{A}} \sum_{i=1}^{n_{\alpha}}
\left(\id\otimes\tau_{\cl N}^{\otimes n+1}\right)
\left(D_{1,2}^*(x_{\alpha, i}\otimes 1_{\cl N}\otimes y_{\alpha, i})D_{1,2}\right)\\
&&=
\left(\id\otimes\tau_{\cl N}^{\otimes n+1}\right)
\left(\tilde D_{1,2}^*\cdots \tilde D_{1,n+2}^*(x\otimes 1_{M}^{(n+1)})\tilde D_{1,n+2}\cdots \tilde D_{1,2}\right).
\end{eqnarray*}
Now, an identity similar to (\ref{eq_zyin})  and expressions (\ref{eq_Unind}) and (\ref{eq_Phiin}) imply 
$$\langle J_1^*U^nJ(x),y\rangle_{\cl B(H),\cl S_1(H)}
= 
    \langle U^n(x\otimes 1),y\otimes 1 \rangle_{\cl M,\cl M_*}
= 
\langle \Phi^n(x),y\rangle_{\cl B(H),\cl S_{1}(H)},$$
showing that $\Phi$ is absolutely dilatable.

(ii)$\Leftrightarrow$(iii) 
Assume (ii) holds. 
Using Lemma \ref{l_symbolc} and (\ref{eq}), we have that 
\begin{eqnarray*}
& &\langle u_{\Phi_D} (e_i\otimes e_k), e_l\otimes e_j\rangle
=  
\left\langle\Phi_D(\epsilon_{i,j}),\epsilon_{k,l}\right\rangle\\
& &=  
\left\langle (\id\otimes\tau_{\cl N})(D^*(\epsilon_{i,j}\otimes 1_{\cl N})D),\epsilon_{k,l}\right\rangle\\
 &&=\tau_{\cl N}(L_{\epsilon_{k,l}}(D^*(\epsilon_{i,j}\otimes 1_{\cl N})D))
= \tau_{\cl N}(d_{i,l}^*d_{j,k})\\
& &= 
\langle (\id\otimes\id\otimes \tau_{\cl N})(D_{1,3}^* D_{2,3})(e_i\otimes e_k), e_l\otimes e_j \rangle
\end{eqnarray*} 
and hence $u_{\Phi_D}=(\id\otimes\id\otimes \tau_{\cl N})(D_{1,3}^*D_{2,3})$. Reversing the arguments we obtain that if $u_\Phi$ is of the form as in (iii) then $$\langle\Phi(\epsilon_{i,j}),\epsilon_{k,l}\rangle=\langle \id\otimes \tau_{\cl N})(D^*(\epsilon_{i,j}\otimes 1_{\cl N})D),  \epsilon_{k,l}\rangle.$$ 
The fact that $\Phi=\Phi_D$ follows from linearity and weak*-continuity. \end{proof}

\begin{remark}\label{r_ancilla}
\rm 
It follows from the proof of Theorem \ref{tar1} that a 
unital trace-preserving $\cl D'$-bimodule map $\Phi : \cl B(H)\to \cl B(H)$
is absolutely dilatable if and only if there exists a 
finite von Neumann algebra $(\cl N,\tau_{\cl N})$ and 
a normal trace-preserving 
*-automorphism $U : \cl B(H)\bar\otimes\cl N\to \cl B(H)\bar\otimes\cl N$
such that, if 
$J : \cl B(H)\to \cl B(H)\bar\otimes\cl N$ and 
$J_1 : \cl S_1(H)\to \cl S_1(H)\hat{\otimes} \cl N_*$ are the ampliation maps $x \mapsto x\otimes 1$, then 
$\Phi = J_1^*\circ U \circ J$. 
In the sequel, we drop the adjective \lq\lq absolutely'', and simply refer 
to $\Phi$ being dilatable, and we call the von Neumann algebra $\cl N$
its \emph{ancilla}. 
\end{remark}

\begin{remark} \label{r_sepancilla}
  \rm  
  The proof of Theorem \ref{tar1} shows that,
  if we assume that the map $ \Phi$ in its statement 
  admits a separable absolute dilation, then the ancilla $\cl N$
can be chosen to have separable predual.
\end{remark}

\begin{remark}\label{r_findimcase}
\rm 
In \cite[Theorem 2.2]{hm1}, the authors show that 
the factorisable channels $\Phi : M_n\to M_n$ have the form 
$\Phi(x) = \Phi_D$, where $D \in M_n \otimes \cl N$ is a unitary, 
for some finite tracial von Neumann algebra $(\cl N,\tau_{\cl N})$. 
This result can be deduced
from the proof of Theorem \ref{tar1}.  Indeed, in 
this case, $\cl D = M_n$ and 
one can chose $ D$ to be in the von Neumann $M_{n} \otimes \cl N$ in (ii) as follows: The proof remaining unchanged until equation (\ref{e_vnaM}), we note that 
$ \{\epsilon_{i,j}\otimes 1_{\cl N_{1}} \}_{i,j=1}^{n}$ and $ \{U(\epsilon_{i,j}\otimes 1_{\cl N_{1}})\}_{i,j=1}^{n}$ are two systems of matrix units in the von Neumann algebra $\cl M$. By Lemma 2.1 in \cite{hm1}, there exists a unitary $ D $ in $\cl M = M_{n} \otimes \cl N_{1}$  such that 
    \[U(\epsilon_{i,j} \otimes 1_{\cl N_{1}}) 
    = D^{*}(\epsilon_{i,j} \otimes 1_{\cl N_{1}})D, \quad i,j=1,\dots,n. \]
From this we obtain that 
\[U(x \otimes 1_{\cl N_{1}}) = D^{*}(x \otimes 1_{\cl N_{1}})D, \quad x\in M_{n} \]
and the rest of our proof works unchanged. 
\end{remark}

We now discuss how \cite[Theorem 1.1]{dlem}, 
characterising absolutely dilatable Schur multipliers, 
fits into the framework of Theorem \ref{tar1}.
Let $(X,\mu)$ be a $\sigma$-finite measure space and write 
$L^2(X) = L^2(X,\mu)$. Assume that $(X,\mu)$ is separable, meaning that the Hilbert space $L^2(X)$ is separable. Recall that, for any $f\in L^2(X\times X)$ (with respect to the product measure), we may associate an integral operator $T_f\in \cl S_2(L^2(X))$, given by 
$$(T_f\xi)(y)=\int_Xf(x,y)\xi(x)d\mu(x), \xi\in L^2(X),$$
and the mapping $f\mapsto T_f$ is a unitary, 
yielding a Hilbert space identification $L^2(X\times X)\simeq \cl S_2(L^2(X)).$
To any $\phi\in L^\infty(X\times X)$, we associate a bounded linear map $$S_\phi: \cl S_2(L^2(X))\to \cl S_2(L^2(X)),$$
given by $S_\phi(T_f)=T_{\phi f}$. We have
$$\|S_\phi:\cl S_2\to\cl S_2\|=\|\phi\|_\infty.$$
We say that $\phi$ is a {\it Schur multiplier}  if $S_\phi$ is bounded with respect to the operator norm on $\cl S_2(L^2(X))$, 
that is, there exists $C > 0$ such that 
$\|S_\phi(T_f)\|\leq C\|T_f\|$ for every $f\in L^2(X\times X)$. In this case $S_\phi$  can be extended by continuity to a bounded map on the space of compact operators and to a weak$^*$ continuous map on $\cl B(L^2(X))$. We keep the notation $S_\phi$ to denote the latter extension. We note that $S_\phi$ is a $L^\infty(X)$-bimodule map, where we identify $L^\infty(X)$ with the 
corresponding multiplication operator algebra. 
As $L^\infty(X)$ is a maximal abelian von Neumann algebra, 
the map $S_\phi$ is automatically completely bounded on $\cl B(L^2(X))$ \cite{smith-aut}. 
Moreover, $S_\phi$ is positive if and only if it is completely positive; in this case, there exist $a_i\in L^\infty(X)$, $i\in \bb{N}$, 
such that $\text{esssup}\sum_{i=1}^{\infty}|a_i(x)|^2<\infty$ and  the symbol 
$u_{S_\phi} \in L^{\infty}(X\times X)$ 
of $S_\phi$ is given by $u_{S_\phi} = \sum_{i=1}^\infty \bar{a_i}\otimes a_i$. 
It can be easily checked that the series 
converges almost everywhere and hence
we have, in particular, that $$\phi(x,y) =\sum_{i=1}^\infty \overline{a_i(x)}a_i(y)\text{ for a.e. } (x,y)\in X\times X. $$
We refer the reader for the details about Schur multipliers to the survey \cite{tt-survey}.

If $\cl M$ is a von Neumann algebra with a separable predual $\cl M_*$, one calls a function $d:X\to \cl M$ $w^*$-measurable if for all $F\in\cl M_*$, the scalar function $x\mapsto \langle d(x),F\rangle$ is measurable on $X$. Let $L_\sigma(X,\cl M)$ denote the space of all $w^*$-measurable functions $d:X\to \cl M$ defined almost everywhere such that $\|d(\cdot)\|$ is essentially bounded. The natural embedding of  $L^\infty(X)\otimes \cl M$ into $L^\infty_\sigma(X,\cl M)$  extends to a von Neumann algebra identification $L^\infty(X)\bar\otimes \cl M=L^\infty_\sigma(X,\cl M) $
(see \cite[Theorem 1.22.13]{sakai}).

\begin{corollary}
\cite[Theorem 1.1]{dlem}
\label{Schur_mult}
Assume that $(X,\mu)$ is separable and let $\phi\in L^\infty(X\times X)$. The following are equivalent:
\begin{itemize}
\item[(i)] 
the function $\phi$ is a Schur multiplier and $S_\phi: \cl B(L^2(X))\to\cl B(L^2(X))$ admits a separable absolute dilation; 

\item[(ii)] there exist a 
finite tracial von Neumann algebra $(\cl N,\tau_{\cl N})$ with a separable predual and a unitary $d\in L^\infty_\sigma(X,\cl N)$ such that 
    \begin{equation}
        \phi(x,y)=\tau_{\cl N}(d(x)^*d(y)) \text{ for a.e. } (x,y)\in X\times X.
    \end{equation}
\end{itemize}
\end{corollary}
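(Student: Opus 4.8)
The strategy is to deduce this from Theorem \ref{tar1} by taking $\cl D = L^\infty(X)$, which is maximal abelian, so that $\cl D' = \cl D$ and $\cl D'$-modular maps are exactly the Schur multipliers. The two main translation tasks are: (a) rewrite the unitary $D \in \cl D\bar\otimes\cl B(K)$ as a measurable unitary-valued function, and (b) convert the ``returns to $\cl N$'' and ``trace preservation'' conditions into the clean form $\phi(x,y)=\tau_{\cl N}(d(x)^*d(y))$.

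\smallskip

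First I would handle (i)$\Rightarrow$(ii). Given that $S_\phi$ admits a separable absolute dilation, apply Theorem \ref{tar1} together with Remark \ref{r_sepancilla} to obtain a Hilbert space $K$, a finite tracial von Neumann algebra $(\cl N,\tau_{\cl N})$ with separable predual acting on $K$, and a unitary $D \in L^\infty(X)\bar\otimes\cl B(K)$ that returns to $\cl N$ and satisfies trace preservation, with $S_\phi = \Phi_D$. Using the identification $L^\infty(X)\bar\otimes\cl B(K) = L^\infty_\sigma(X,\cl B(K))$ recalled just before the corollary, write $D = D(\cdot)$ as a $w^*$-measurable essentially bounded function $x\mapsto D(x)\in\cl B(K)$; unitarity of $D$ gives $D(x)$ unitary for a.e.\ $x$. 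Next I would unwind the formula for the operator symbol. By Theorem \ref{tar1}(iii), $u_{S_\phi} = (\id\otimes\id\otimes\tau_{\cl N})(D_{1,3}^*D_{2,3})$; since $\cl D$ is abelian and we are working with the multiplication algebra on $L^2(X)$, the operator symbol $u_{S_\phi}\in L^\infty(X\times X)$ coincides with the function $\phi$ (this is the standard dictionary between Schur multipliers and their symbols, cf.\ Lemma \ref{l_symbolc} read through the diagonal/multiplication picture). Evaluating the slice $(\id\otimes\id\otimes\tau_{\cl N})(D_{1,3}^*D_{2,3})$ in the $L^\infty_\sigma$ picture yields precisely the function $(x,y)\mapsto \tau_{\cl N}(D(x)^*D(y))$, so $\phi(x,y) = \tau_{\cl N}(d(x)^*d(y))$ a.e.\ with $d = D(\cdot)$. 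One must also record that the condition ``$D$ returns to $\cl N$'' is automatic here up to replacing $\cl N$ by a suitable von Neumann subalgebra, or alternatively observe that $d(x)^*d(y)\in\cl N$ is exactly what Lemma \ref{l_returns}(i) says after discretising $X$ — but since in (ii) we only need $d$ unitary-valued into $\cl N$ and formula for $\phi$, one can enlarge $\cl N$ to contain all the needed products (it remains finite tracial with separable predual).

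\smallskip

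For (ii)$\Rightarrow$(i), I would run the construction in reverse. Given a unitary $d\in L^\infty_\sigma(X,\cl N)$ with $\phi(x,y)=\tau_{\cl N}(d(x)^*d(y))$, let $K$ be a separable Hilbert space on which $\cl N$ acts and set $D\in L^\infty(X)\bar\otimes\cl B(K) = L^\infty_\sigma(X,\cl B(K))$ to be the function $x\mapsto d(x)$, which is a unitary. Then $D$ returns to $\cl N$ because the discrete ``coordinates'' $d_{i,j}$ of $D$ relative to any orthonormal basis of $L^2(X)$ are of the form $\int$ against $d(\cdot)$, so all products $d_{i,j}^*d_{k,l}$ lie in $\cl N$ (this uses that $\cl N$ is weak* closed and the slice maps $\omega\mapsto L_\omega(D)$ land in $\cl N$ since $d(x)\in\cl N$ a.e.); hence Lemma \ref{l_returns} applies. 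Trace preservation holds since $d(x)$ is unitary: $(\tr\otimes\tau_{\cl N})(D^*(ee^*\otimes 1_{\cl N})D) = \int_X |e(x)|^2 \tau_{\cl N}(d(x)^*d(x))\,d\mu(x) = \|e\|^2 = 1$ for any unit vector $e\in L^2(X)$. Now $\Phi_D$ is well-defined and, by Lemma \ref{l_symbolc} and the computation in the proof of Theorem \ref{tar1}, its operator symbol is $(x,y)\mapsto\tau_{\cl N}(d(x)^*d(y)) = \phi(x,y)$, so $\Phi_D = S_\phi$; in particular $S_\phi$ is a (completely positive) Schur multiplier. By Theorem \ref{tar1}(ii)$\Rightarrow$(i), $S_\phi$ is absolutely dilatable, and by Remark \ref{r_sepancilla} (tracking separability: $L^2(X)$ separable, $\cl N_*$ separable, hence $\cl M$ in the dilation has separable predual) the dilation is separable. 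This completes both directions.

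\smallskip

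The main obstacle I anticipate is task (b) combined with the bookkeeping in the forward direction: carefully justifying that the operator symbol $u_{S_\phi}$, an a priori abstract element of $\cl B(L^2(X)\otimes L^2(X))$, is represented by the honest function $\phi$, and that slicing $D_{1,3}^*D_{2,3}$ by $\tau_{\cl N}$ in the third leg produces the pointwise function $\tau_{\cl N}(d(x)^*d(y))$ — this requires invoking the $L^\infty_\sigma$/$\bar\otimes$ identification at the level of the iterated tensor product $L^\infty(X\times X)\bar\otimes\cl B(K)$ and checking measurability of $(x,y)\mapsto\tau_{\cl N}(d(x)^*d(y))$. A secondary technical point is the ``returns to $\cl N$'' clause in (ii)$\Rightarrow$(i): one should verify it via the discrete-coordinate characterisation in Lemma \ref{l_returns}(i) rather than trying to check condition (iv) directly, since the former is immediate from $d(x)\in\cl N$ a.e. None of these steps is deep, but they need the measurable–discrete dictionary that the paper emphasises it is using in place of \cite{dlem}'s measurable coordinates.
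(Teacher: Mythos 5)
Your overall framework is right (apply Theorem \ref{tar1} with $\cl D=L^\infty(X)$, translate through $L^\infty(X)\bar\otimes\cl B(K)\equiv L^\infty_\sigma(X,\cl B(K))$, and read the operator symbol as the function $\phi$), and your direction (ii)$\Rightarrow$(i) is essentially the paper's argument and is fine. But there is a genuine gap in (i)$\Rightarrow$(ii), at exactly the point you flag as a side issue. Theorem \ref{tar1} hands you a unitary $D\in L^\infty(X)\bar\otimes\cl B(K)$, i.e.\ a function $x\mapsto D(x)$ with values in $\cl B(K)$, and the condition ``$D$ returns to $\cl N$'' only guarantees that the \emph{products} $D(x)^*D(y)$ lie in $\cl N$ for a.e.\ $(x,y)$ --- it does not put $D(y)$ itself in $\cl N$. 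Statement (ii) of the corollary, however, demands a unitary $d\in L^\infty_\sigma(X,\cl N)$, so setting $d=D(\cdot)$ does not meet the requirement. Your proposed repair --- ``enlarge $\cl N$ to contain all the needed products'' --- does not work: the products are already in $\cl N$; what is missing is $D(y)$ itself, and the von Neumann algebra generated by $\cl N$ together with all the $D(y)$ inside $\cl B(K)$ need not admit any finite normal faithful trace (it could be all of $\cl B(K)$), let alone one extending $\tau_{\cl N}$ so that the formula $\phi(x,y)=\tau(d(x)^*d(y))$ survives.

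The paper closes this gap with a normalisation trick: by Fubini one finds a point $x_0$ such that $D(x_0)$ is unitary and $D(x_0)^*D(y)\in\cl N$ for almost every $y$, and one sets $d(y):=D(x_0)^*D(y)$. This $d$ is a $w^*$-measurable unitary-valued function into $\cl N$, and
\begin{equation*}
d(x)^*d(y)=D(x)^*D(x_0)D(x_0)^*D(y)=D(x)^*D(y),
\end{equation*}
so the identity $\phi(x,y)=\tau_{\cl N}(d(x)^*d(y))$ holds with the \emph{same} trace $\tau_{\cl N}$. This one-line gauge fixing is the missing idea; without it (or an equivalent device) the forward implication does not produce the object asserted in (ii).
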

\begin{proof}
    (i)$\Rightarrow$ (ii) 
As $S_\phi$ is $L^\infty(X)$-modular, by Theorem \ref{tar1} there exist a separable Hilbert space $K$,
a tracial von Neumann algebra $(\cl N,\tau_{\cl N})$ acting on $K$, and  a unitary operator $D\in L^\infty(X)\bar\otimes \cl B(H)\equiv L^\infty_\sigma(X,\cl B(H))$ that returns to $\cl N$  such that
    $S_{\phi}=(\id\otimes\tau_{\cl N})\circ\Phi_{D}$ and with the symbol $u_{S_\phi}=(\id\otimes\id\otimes\tau_{\cl N})(D_{2,3}^*D_{1,3})$. Note that $D_{2,3}^*D_{1,3}\in L^\infty(X)\bar\otimes L^\infty(X)\bar\otimes\cl N$ and through natural identification, we obtain that $D^*(x)D(y)\in \cl N$  and $D^*(x)D(x)=1_K$ for almost all $x,y\in X$ (see arguments in the proof of \cite[Lemma 5.2, 5.3]{dlem}). Hence there exists $x_0\in X$ such that $D^*(x_0)D(y)\in\cl N$ for almost all $y\in X$ and $D(x_0)$ is unitary. 
    Consider $d(y)=D^*(x_0)D(y)$. Clearly,
    $$d^*(x)d(y)=D^*(x)D(x_0)D(x_0)^*D(y)=D^*(x)D(y)=(D_{2,3}^*D_{1,3})(x,y)$$ and
    $$\phi(x,y)=u_{S_\phi}(x,y)=\tau_{\cl N}(d^*(x)d(y)) \text{ a.e. }$$

(ii)$\Rightarrow$ (i) 
follows by reversing the arguments in the previous 
paragraph. 
\end{proof}


\section{A hierarchy for dilatable maps}\label{s_hier}

In this section, we define several classes of dilatable 
unital completely positive maps, following an established by now route of differentiating between various ancilla types 
\cite{hm1, hm2, pr}. 
Throughout the section, we assume that all 
absolutely dilatable maps 
admits a separable absolute dilation and, 
by virtue of Remark \ref{r_sepancilla}, the ancillas are chosen to have separable predual. 

Let $I$ be a set, $(\cl M_i, \tau_i)$ be a finite tracial von Neumann algebra, $i\in I$, 
and set $\cl M = \oplus_{i\in I}^\infty \cl M_i$, an $\ell^\infty$-direct sum. Let $\frak{u}$ be a free ultrafilter on the set $I$ and set
\begin{equation}\label{eq_Ju}
\cl J_{\frak{u}} = \left\{(x_i)_{i\in I} 
\in \oplus^{\infty}_{i\in I} \cl M_i : 
\lim\hspace{-0.05cm}\mbox{}_{\frak{u}} \tau_i(x_i^*x_i) = 0\right\},
\end{equation}
where $\lim_{\frak{u}}$ denotes the limit along $\frak{u}$;
clearly, $\cl J_{\frak{u}}$ is a closed two-sided ideal 
of $\oplus^{\infty}_{i\in I} \cl M_i$.

We view $\cl M_i$ in its standard form, acting on the Hilbert space 
$K_i = L^2(\cl M_i,\tau_i)$ arising from the GNS construction applied to 
$\tau_i$, $i\in I$.
We recall \cite[Section 11.5]{Pi2} that the ultraproduct 
$\cl M^{\frak{u}}$ acts on the Hilbert space $K_{\frak{u}}$ of the 
GNS representation of $\cl M$, arising from the state 
$f_{\frak{u}} : \cl M\to \bb{C}$, given by 
\begin{equation}\label{eq_fu}
f_{\frak{u}} \left((x_i)_{i\in I}\right) = 
\lim\hspace{-0.05cm}\mbox{}_{\frak{u}} \tau_i(x_i), \ \ \ 
(x_i)_{i\in I}\in \cl M.
\end{equation}
The GNS representation $\pi_{f_{\frak{u}}} : \cl M\to \cl B(K_{\frak{u}})$
annihilates $\cl J_{\frak{u}}$ and gives rise to a 
faithful *-representation 
$\pi_{\frak{u}} : \cl M/\cl J_{\frak{u}} \to \cl B(K_{\frak{u}})$,
and the ultraproduct of the family 
$((\cl M_i,\tau_i))_{i\in I}$
is defined to be the image $\pi_{\frak{u}}\left(\cl M/\cl J_{\frak{u}}\right)$
of $\cl M/\cl J_{\frak{u}}$ inside $\cl B(K_{\frak{u}})$
(see \cite[Theorem 11.26]{Pi2}). 
We note that $\cl M/\cl J_{\frak{u}}$ 
can be naturally considered as a dense subspace of 
$K_{\frak{u}}$. 
We denote by 
$\tau_{\frak{u}}$ the trace on $\cl M^{\frak{u}}$, induced by the 
functional $f_{\frak{u}}$.
In the case where $\cl M_k = M_{n_k}$ for some $n_k\in \bb{N}$, and $\tau_k$ is the 
normalised trace on $M_{n_k}$, we refer to $\cl M^{\frak{u}}$ as a 
\emph{matricial ultraproduct} (see \cite[Remark 11.32]{Pi2}). 
A tracial von Neumann algebra $(\cl A,\tau_{\cl A})$ is said to embed in 
a matricial ultraproduct $\cl M^{\frak{u}}$, if 
there exists a normal trace-preserving *-monomorphism from $\cl A$ into $\cl M^{\frak{u}}$. 


Let $H$ be a Hilbert space and $\cl D\subseteq \cl B(H)$ be a von Neumann algebra. 
We will call a dilatable $\cl D'$-bimodule map $\Phi : \cl B(H)\to \cl B(H)$
\begin{itemize} 
\item[(i)] \emph{locally factorisable} if it admits an abelian ancilla; 

\item[(ii)] \emph{quantum factorisable} if it admits a finite dimensional ancilla;

\item[(iii)] \emph{approximately quantum factorisable} if it admits an ancilla that 
can be embedded in a matricial ultraproduct $\cl M^{\frak{u}}$.
\end{itemize}
In the sequel, dilatable maps will be also referred to as 
\emph{quantum commuting factorisable}. 
We write $\frak{D}_{{\rm qc},\cl D}(H)$
(resp. $\frak{D}_{{\rm qa},\cl D}(H)$, $\frak{D}_{{\rm q},\cl D}(H)$, 
$\frak{D}_{{\rm loc},\cl D}(H)$) 
for the sets of all factorisable (resp. approximately quantum factorisable, 
quantum factorisable, locally factorisable) $\cl D'$-modular maps on $\cl B(H)$. 
For ${\rm t}\in \{{\rm loc}, {\rm q}, {\rm qa}, {\rm qc}\}$, set 
$\frak{D}_{\rm t}(H) = \frak{D}_{{\rm t},{\cl B(H)}}(H)$. 
If, for a map $\Phi \in \frak{D}_{{\rm t},\cl D}(H)$, 
the unitary operator $D$ in the representation of $\Phi$ in 
Theorem \ref{tar1} (ii) can be chosen from $\cl D\bar\otimes\cl N$, we 
say that $\Phi$ admits a 
\emph{${\rm t}$-exact factorisation}.  
Let 
$\frak{D}_{{\rm t},\cl D}^{\rm ex}(H)$ be the set of all maps 
that admit a ${\rm t}$-exact factorisation.

\begin{proposition}\label{p_convex}
For ${\rm t}\in \{{\rm loc}, {\rm q}, {\rm qa}, {\rm qc}\}$, the sets 
$\frak{D}_{{\rm t},\cl D}(H)$ and 
$\frak{D}_{{\rm t},\cl D}^{\rm ex}(H)$ are convex. 
\end{proposition}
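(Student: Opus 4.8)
The plan is to prove convexity by taking two maps in the relevant class and forming a direct-sum ancilla that witnesses membership of any convex combination. Let me spell out the construction for $\frak{D}_{{\rm t},\cl D}(H)$ first. Suppose $\Phi_0,\Phi_1\in\frak{D}_{{\rm t},\cl D}(H)$ and fix $\lambda\in(0,1)$. By Theorem \ref{tar1}(ii) each $\Phi_j$ has the form $\Phi_j=\Phi_{D_j}$ for a unitary $D_j\in\cl D\bar\otimes\cl B(K_j)$ returning to a finite tracial von Neumann algebra $(\cl N_j,\tau_{\cl N_j})$ acting on $K_j$ and satisfying trace preservation. The natural candidate ancilla for $\lambda\Phi_0+(1-\lambda)\Phi_1$ is $\cl N:=\cl N_0\oplus\cl N_1$, equipped with the trace $\tau_{\cl N}(a_0\oplus a_1)=\lambda\tau_{\cl N_0}(a_0)+(1-\lambda)\tau_{\cl N_1}(a_1)$, which is a faithful normal finite trace with $\tau_{\cl N}(1_{\cl N})=1$. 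Acting $\cl N$ on $K:=K_0\oplus K_1$, set $D:=D_0\oplus D_1\in\cl D\bar\otimes\cl B(K)$, using $\cl B(K_0)\oplus\cl B(K_1)\subseteq\cl B(K)$ (here I exploit that the ambient algebra is $\cl D\bar\otimes\cl B(K)$, not $\cl D\bar\otimes\cl N$, so the block-diagonal form with the off-diagonal corners zero is legitimate).

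Next I would verify the three requirements. First, $D$ is unitary since $D_0,D_1$ are and the off-diagonal blocks vanish. Second, $D$ returns to $\cl N$: using condition (i) of Lemma \ref{l_returns}, if $d^{(j)}_{i,k}=L_{\epsilon_{k,i}}(D_j)$ are the block entries, then the entries of $D$ are $d_{i,k}=d^{(0)}_{i,k}\oplus d^{(1)}_{i,k}$, and $d_{i,k}^*d_{l,m}=(d^{(0)}_{i,k})^*d^{(0)}_{l,m}\oplus(d^{(1)}_{i,k})^*d^{(1)}_{l,m}\in\cl N_0\oplus\cl N_1=\cl N$. Third, trace preservation: pick the common unit vector — after choosing orthonormal bases one may arrange a single rank-one projection $\epsilon_{i_0,i_0}$ working for both $D_0$ and $D_1$, or more simply observe that since $D_j$ satisfies trace preservation we have $(\tr\otimes\tau_{\cl N_j})(D_j^*(e_je_j^*\otimes 1_{\cl N_j})D_j)=1$ for unit vectors $e_j\in H$; one then checks, using that each $D_j$ returns to $\cl N_j$ and that the automorphism $z\mapsto D_j^*zD_j$ on $\cl B(H\otimes K_j)$ is trace-preserving, that the quantity $(\tr\otimes\tau_{\cl N_j})(D_j^*(p\otimes 1_{\cl N_j})D_j)$ is independent of the rank-one projection $p$ — indeed it equals $\tau_{\cl N_j}$ of a rank-one projection of the isomorphic corner — hence equals $1$ for every unit vector. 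Then $(\tr\otimes\tau_{\cl N})(D^*(ee^*\otimes 1_{\cl N})D)=\lambda\cdot 1+(1-\lambda)\cdot 1=1$. Finally, the key identity: for $x\in\cl B(H)$,
\begin{align*}
\Phi_D(x)&=(\id\otimes\tau_{\cl N})(D^*(x\otimes 1_{\cl N})D)\\
&=\lambda(\id\otimes\tau_{\cl N_0})(D_0^*(x\otimes 1_{\cl N_0})D_0)+(1-\lambda)(\id\otimes\tau_{\cl N_1})(D_1^*(x\otimes 1_{\cl N_1})D_1)\\
&=\lambda\Phi_0(x)+(1-\lambda)\Phi_1(x),
\end{align*}
because $D^*(x\otimes 1_{\cl N})D$ is block diagonal with corners $D_j^*(x\otimes 1_{\cl N_j})D_j$ and the slice along $\tau_{\cl N}$ of a block-diagonal element picks up the convex combination of the slices. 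This shows $\lambda\Phi_0+(1-\lambda)\Phi_1\in\frak{D}_{{\rm qc},\cl D}(H)$.

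For the refinement to each type ${\rm t}$, I would check that $\cl N=\cl N_0\oplus\cl N_1$ inherits the defining ancilla property: the direct sum of two abelian algebras is abelian; the direct sum of two finite-dimensional algebras is finite-dimensional; and if $\cl N_j$ embeds trace-preservingly into a matricial ultraproduct, then so does $\cl N_0\oplus\cl N_1$ — one can use a single ultrafilter by passing to the product index set, or observe that a matricial ultraproduct absorbs finite direct sums of matricial ultraproducts with matching trace weights (realise the weights $\lambda,1-\lambda$ by block sizes within the matrix algebras $M_{n_k}$). This handles $\frak{D}_{{\rm t},\cl D}(H)$ for all four values of ${\rm t}$. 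For the exact-factorisation classes $\frak{D}_{{\rm t},\cl D}^{\rm ex}(H)$, the same construction applies verbatim: if $D_j\in\cl D\bar\otimes\cl N_j$, then $D=D_0\oplus D_1$ lies in $\cl D\bar\otimes(\cl N_0\oplus\cl N_1)=\cl D\bar\otimes\cl N$, since block-diagonal elements with corners in $\cl D\bar\otimes\cl N_j$ sit inside $\cl D\bar\otimes\cl N$, and the ancilla is still of type ${\rm t}$.

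The main obstacle I anticipate is the trace-preservation bookkeeping: Theorem \ref{tar1}(ii) only guarantees a unit vector $e_j$ for which the trace-preservation identity holds, and a priori $e_0\neq e_1$, so the direct-sum unitary $D$ must be checked against a single unit vector in $H$. The cleanest fix is the observation above that trace preservation for $D_j$ is actually independent of which unit vector (equivalently, rank-one projection) is used — this follows because the $*$-automorphism $z\mapsto D_j^*zD_j$ of $\cl B(H\otimes K_j)$ is $(\tr\otimes\tau_{\cl N_j})$-preserving and carries the matrix-unit system $\{\epsilon_{i,i}\otimes 1_{\cl N_j}\}$ to a matrix-unit system whose diagonal projections all have equal trace, so $(\tr\otimes\tau_{\cl N_j})(D_j^*(\epsilon_{i_0,i_0}\otimes 1_{\cl N_j})D_j)=1$ forces the same for every $\epsilon_{i,i}$, and by a density/continuity argument for every rank-one projection $ee^*$. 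Once this is in hand the rest is routine direct-sum manipulation.
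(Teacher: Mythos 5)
Your proof is correct and rests on the same core idea as the paper's: the ancilla for the convex combination is $\cl N_0\oplus\cl N_1$ with the weighted trace $\lambda\tau_{\cl N_0}\oplus(1-\lambda)\tau_{\cl N_1}$, and each ancilla type (abelian, finite-dimensional, matricially embeddable) is stable under such direct sums. The difference is one of implementation. For the non-exact classes the paper never touches the unitaries of Theorem \ref{tar1}(ii): it works with the characterisation of Remark \ref{r_ancilla}, takes trace-preserving automorphisms $U_i$ of $\cl B(H)\bar\otimes\cl N_i$ with $\Phi_i=J_1^*\circ U_i\circ J$, and forms $U=U_1\oplus U_2$ on $\cl B(H)\bar\otimes(\cl N_1\oplus\cl N_2)$. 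This sidesteps entirely the issue you spend most of your effort on, namely that the unit vectors witnessing trace preservation for $D_0$ and $D_1$ may differ. Your resolution of that issue is essentially right, but the clean justification is that $(\tr\otimes\tau_{\cl N_j})\circ{\rm Ad}(D_j^*)$ is a trace on the von Neumann algebra $\{z: D_j^*zD_j\in\cl B(H)\bar\otimes\cl N_j\}$, which contains $\cl B(H)\otimes 1$ by Lemma \ref{l_returns}(iv), and all projections $ee^*\otimes 1$ with $e$ a unit vector are Murray--von Neumann equivalent there; the ``density/continuity'' alternative should be dropped (weak* continuity of $p\mapsto(\tr\otimes\tau)(D^*(p\otimes 1)D)$ is not available), and the phrase that ${\rm Ad}(D_j^*)$ ``preserves $\tr\otimes\tau_{\cl N_j}$'' on all of $\cl B(H\otimes K_j)$ should be avoided, since that trace is only defined on $\cl B(H)\bar\otimes\cl N_j$. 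For the exact classes your construction is literally the paper's, $\Phi=\Phi_{D_1\oplus D_2}$. For ${\rm t}={\rm qa}$ the paper is more careful where you are briefest: it realises $\cl N_1\oplus\cl N_2$ inside $\oplus_{k}^{\infty}(M_{n_k}\oplus M_{m_k})/\cl J'_{\frak u}$ with the traces $\lambda\tr_{n_k}+(1-\lambda)\tr_{m_k}$ and then invokes \cite[Remark 11.32]{Pi2} to embed this into a genuine matricial ultraproduct; your suggestion to ``realise the weights by block sizes'' only works exactly for rational $\lambda$, so that final absorption step is genuinely needed.
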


\begin{proof}
First consider the case ${\rm t} = {\rm qc}$. 
Suppose that $(\cl N_i, \tau_i)$ is a finite von Neumann algebra
that is an ancilla for the dilatable map $\Phi_i$, $i = 1,2$, and 
$\Phi = \lambda_1 \Phi_1 + \lambda_2\Phi_2$ as a convex combination. 
We equip $\cl N := \cl N_1\oplus \cl N_2$ with the tracial state 
$\tau$, given by $\tau((z_1,z_2)) = \lambda_1 \tau_1(z_1) + \lambda_2\tau_2(z_2)$. 
Letting $U_i : \cl B(H)\bar\otimes\cl N_i$ be a normal trace-preserving 
*-automorphism, we have that the map
$U : \cl B(H)\bar\otimes\cl N\to \cl B(H)\bar\otimes\cl N$, 
given by 
$U(x\otimes (z_1,z_2)) = U_1(x\otimes z_1) \oplus U_2(x\otimes z_2)$, 
is a normal trace preserving *-automorphism, and $\Phi = J_1^*\circ U \circ J$.
It follows that 
$\frak{D}_{{\rm qc},\cl D}(H)$ is convex.
The convexity of $\frak{D}_{{\rm loc},\cl D}(H)$ and $\frak{D}_{{\rm q},\cl D}(H)$
follows from the fact that, in the preceding argument, the commutativity 
(resp. finite dimensionality) is preserved under direct sums.

The claims for the sets $\frak{D}_{{\rm t},\cl D}^{\rm ex}(H)$ are similar;  consider the case where ${\rm t} = {\rm qc}$. 
With the notation from the previous paragraph, we let 
$D_i\in \cl D\bar\otimes\cl N_i$, be a unitary operator, 
such that $\Phi_i = \Phi_{D_i}$, $i = 1,2$. 
After making the canonical identification 
$\cl D\bar\otimes\cl N = \cl D\bar\otimes\cl N_1\oplus \cl D\bar\otimes\cl N_2$, we
have that $\Phi = \Phi_{D_1\oplus D_2}$.

We next turn to the claim for ${\rm t} = {\rm qa}$. 
Following the steps from the first paragraph it suffices to show that $ \cl N:=\cl N_{1}\oplus \cl N_{2}$ embeds in a trace preserving way into a matricial ultraproduct. We may assume that $ \cl N_{i} \hookrightarrow \cl M_{i}^{\frak{u}}$, $ i=1,2$ along the same free ultrafilter $ \frak{u}$ where $ \cl M_{1}^{\frak{u}}
= \oplus_{k\in \bb{N}}^{\infty}M_{n_{k}}/ \cl J_{\frak{u}}$ and  $ \cl M_{2}^{\frak{u}}= \oplus_{k\in \bb{N}}^{\infty}M_{m_{k}}/ \cl I_{\frak{u}}$
(see e.g. \cite[Section 3.6]{goldbring}). Identify  $ \cl N_{i} $ as a von Neumann subalgebra of $\cl M_{i}^{\frak{u}}$, $ i=1,2$ and denote by $ \tau_{i}$ the respective trace. Hence $ \cl M_{1}^{\frak{u}} \oplus \cl M_{2}^{\frak{u}}$ is a von Neumann algebra equipped with the trace $ \tau = \lambda_{1} \tau_{1} + \lambda_{2}\tau_{2}$. Then,
\begin{align*}
    (\oplus_{k\in \bb{N}}^{\infty}M_{n_{k}}/ \cl J_{\frak{u}}) \oplus ( \oplus_{k\in \bb{N}}^{\infty}M_{m_{k}}/ \cl I_{\frak{u}}) \cong  \oplus_{k\in \bb{N}}^{\infty}(M_{n_{k}} \oplus M_{m_{k}}) /\cl J'_{\frak{u}}
\end{align*}
where $\cl J'_{\frak{u}} = \{(x_{k}\oplus y_{k})_{k\in \bb N}: \lim_{\frak{u}}\tau_{k}(x^{*}_{k}x_{k}\oplus y^{*}_{k}y_{k})=0 \}$ with $ \tau_{k} = \lambda_{1}\tr_{n_{k}} + \lambda_{2}\tr_{m_{k}}$. Now the claim follows by embedding the latter ultraproduct into a matricial one \cite[Remark 11.32]{Pi2}.
\end{proof}

In the next result central for this section, 
Theorem \ref{p_closures} below, 
we will need a lemma about the behaviour of ultraproducts under 
tensoring. 
Let $(\cl D,\delta)$ be a finite
tracial von Neumann algebra, acting on the 
Hilbert space $H = L^2(\cl D,\delta)$. 
We equip $\cl D\bar\otimes\cl M_i$ with the tracial state $\delta\otimes\tau_i$, 
and note that the Hilbert space of the GNS construction arising from 
$\delta\otimes\tau_i$ coincides with the Hibertian tensor product 
$H\otimes K_i$, $i\in I$. 
Let $\tilde{\cl M} = \oplus^{\infty}_{i\in I} \cl D\bar\otimes\cl M_i$, 
and $\tilde{\cl J}_{\frak{u}}$ be the ideal of $\tilde{\cl M}$,
corresponding to the family 
$((\cl D\bar\otimes\cl M_i,\delta \otimes \tau_i))_{i\in I}$, 
defined analogously to (\ref{eq_Ju}). 
Let $\tilde{f}_{\frak{u}} : \tilde{\cl M}\to \bb{C}$ be the state, defined 
as in (\ref{eq_fu}), $\tilde{H}_{\frak{u}}$ be the Hilbert space 
arising from the GNS construction, applied to $\tilde{f}_{\frak{u}}$, and
$\tilde{\pi}_{f_{\frak{u}}} : \tilde{\cl M}\to \cl B(\tilde{H}_{\frak{u}})$
be the corresponding GNS representation.
Thus, the ultraproduct of the family 
$((\cl D\bar\otimes\cl M_i,\delta \otimes \tau_i))_{i\in I}$ along $\frak{u}$ is
*-isomorphic to the image $\tilde{\pi}_{\frak{u}}\left(\tilde{\cl M}/\tilde{\cl J}_{\frak{u}}\right)$ inside $\cl B(\tilde{H}_{\frak{u}})$.

The notation established in the last two paragraphs is used 
in the formulation and the proof of the next lemma. 
If $V$ and $W$ are vector spaces, we denote by 
$V\odot W$ their algebraic tensor product.

\begin{lemma}\label{l_tenuni}
The operator $\tilde{V} : \cl D\odot \cl M \to 
\tilde{\cl M}/\tilde{\cl J_{\frak{u}}}$, given by 
$\tilde{V}(a\otimes (x_i)_{i\in I}) = (a\otimes x_i)_{i\in I} + \tilde{\cl J}_{\frak{u}}$, 
annihilates $\cl D\odot \cl J_{\frak{u}}$ and thus induces an operator 
$V : \cl D\odot \left(\cl M/\cl J_{\frak{u}}\right) \to \tilde{\cl M}/\tilde{\cl J_{\frak{u}}}$, 
which is isometric with respect to the norms of 
$H\otimes K_{\frak{u}}$ and $\tilde{H}_{\frak{u}}$. 
It thus extends to a unitary operator (denoted in the same way) 
$V : H\otimes K_{\frak{u}}\to \tilde{H}_{\frak{u}}$, such that 
\begin{equation}\label{eq_unitequi}
V^* \tilde{\pi}_{\frak{u}}\left(\tilde{\cl M}/\tilde{\cl J}_{\frak{u}}\right) V
= \cl D\bar\otimes \pi_{\frak{u}}\left(\cl M/\cl J_{\frak{u}}\right).
\end{equation}
In particular, 
$$\cl D\bar{\otimes} \left(\cl M/\cl J_{\frak{u}}\right) \cong 
\tilde{\cl M}/\tilde{\cl J}_{\frak{u}}$$
up to a normal *-isomorphism. 
\end{lemma}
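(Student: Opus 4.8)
The plan is to establish the isometry claim first, then leverage density and the GNS machinery to conclude the unitary equivalence and the resulting normal $*$-isomorphism. First I would verify that $\tilde V$ annihilates $\cl D\odot\cl J_{\frak u}$: if $(x_i)_{i\in I}\in\cl J_{\frak u}$, then for $a\in\cl D$ we have $(\delta\otimes\tau_i)\big((a\otimes x_i)^*(a\otimes x_i)\big)=\delta(a^*a)\,\tau_i(x_i^*x_i)\to 0$ along $\frak u$, so $(a\otimes x_i)_{i\in I}\in\tilde{\cl J}_{\frak u}$; extending linearly handles general elements of $\cl D\odot\cl J_{\frak u}$ (one can diagonalize a finite sum by choosing $a_1,\dots,a_n$ linearly independent, or simply estimate directly). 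Hence $V$ is well defined on $\cl D\odot(\cl M/\cl J_{\frak u})$.

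Next I would compute the relevant inner products to show $V$ is isometric. The inner product on $\tilde H_{\frak u}$ is $\langle\, \cdot\, ,\,\cdot\,\rangle$ induced by $\tilde f_{\frak u}$, and the inner product on $H\otimes K_{\frak u}$ is the tensor product of the GNS inner product on $H=L^2(\cl D,\delta)$ with the one on $K_{\frak u}$ induced by $f_{\frak u}$. For $a,b\in\cl D$ and $(x_i)_{i},(y_i)_{i}\in\cl M$, one has
\begin{align*}
\big\langle V\big(b\otimes((y_i)_i+\cl J_{\frak u})\big),\, V\big(a\otimes((x_i)_i+\cl J_{\frak u})\big)\big\rangle
&=\tilde f_{\frak u}\big((a^*b\otimes x_i^*y_i)_i\big)
=\lim\mbox{}_{\frak u}\,(\delta\otimes\tau_i)(a^*b\otimes x_i^*y_i)\\
&=\delta(a^*b)\,\lim\mbox{}_{\frak u}\tau_i(x_i^*y_i)
=\delta(a^*b)\,f_{\frak u}\big((x_i^*y_i)_i\big),
\end{align*}
which is exactly the inner product of $b\otimes((y_i)_i+\cl J_{\frak u})$ and $a\otimes((x_i)_i+\cl J_{\frak u})$ in $H\otimes K_{\frak u}$. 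Since $\cl M/\cl J_{\frak u}$ is dense in $K_{\frak u}$ and $\cl D$ is dense in $H$, the algebraic tensor product $\cl D\odot(\cl M/\cl J_{\frak u})$ is dense in $H\otimes K_{\frak u}$; similarly the range of $\tilde V$, namely $\tilde{\cl M}/\tilde{\cl J}_{\frak u}$, is dense in $\tilde H_{\frak u}$ — here I would note that elementary tensors $(a\otimes x_i)_i$ with $a\in\cl D$, $(x_i)_i\in\cl M$, together with their finite linear combinations, already exhaust $\cl D\odot\cl M$, whose image is dense in $\tilde{\cl M}$ and hence dense in $\tilde H_{\frak u}$. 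So $V$ extends to a surjective isometry, i.e.\ a unitary $V:H\otimes K_{\frak u}\to\tilde H_{\frak u}$.

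It remains to verify the intertwining identity (\ref{eq_unitequi}). On the dense subspace it is straightforward: for $c\in\cl D$, $(z_i)_i\in\cl M$, the operator $\tilde\pi_{\frak u}\big((c\otimes z_i)_i+\tilde{\cl J}_{\frak u}\big)$ acts on $\tilde{\cl M}/\tilde{\cl J}_{\frak u}$ by left multiplication, sending $(a\otimes x_i)_i+\tilde{\cl J}_{\frak u}$ to $(ca\otimes z_ix_i)_i+\tilde{\cl J}_{\frak u}$; pulling back through $V$ shows this equals the action of $c\otimes\pi_{\frak u}\big((z_i)_i+\cl J_{\frak u}\big)$ on $\cl D\odot(\cl M/\cl J_{\frak u})\subseteq H\otimes K_{\frak u}$. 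Taking weak$^*$ closures of both sides — using that $V$ is unitary and that $\cl D\odot\cl M$ generates $\tilde{\cl M}/\tilde{\cl J}_{\frak u}$ as a von Neumann algebra, while $\cl D$ and $\cl M/\cl J_{\frak u}$ generate $\cl D\bar\otimes\pi_{\frak u}(\cl M/\cl J_{\frak u})$ — gives (\ref{eq_unitequi}). The final $*$-isomorphism is then conjugation by $V$, which is automatically normal since it is spatial. The main obstacle I anticipate is the bookkeeping around the two GNS constructions: one must be careful that the Hilbert space $\tilde H_{\frak u}$ for the family $(\cl D\bar\otimes\cl M_i)_i$ is genuinely identified with $H\otimes K_{\frak u}$ rather than some larger space, which is precisely what the isometry-plus-density argument secures; everything else is a routine transfer of the elementary-tensor computation through the weak$^*$ closure.
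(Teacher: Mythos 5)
Your proposal follows the same route as the paper's own proof: well-definedness of $V$ via the identity $(\delta\otimes\tau_i)\big((a\otimes x_i)^*(a\otimes x_i)\big)=\delta(a^*a)\tau_i(x_i^*x_i)$, the isometry via the ultralimit computation of inner products, density of the range to upgrade the isometry to a unitary, and the intertwining relation checked on elementary tensors. The first two steps are correct. The genuine gap is the surjectivity of $V$, which is where all the content of the lemma sits, and the justification you offer does not work. You claim that the image of $\cl D\odot\cl M$ ``is dense in $\tilde{\cl M}$ and hence dense in $\tilde{H}_{\frak{u}}$''. It is not norm-dense in $\tilde{\cl M}$: a general element $(w_i)_{i\in I}$ with $w_i\in\cl D\bar\otimes\cl M_i$ need not be uniformly approximable by sums $\sum_k a_k\otimes x_{i,k}$ whose $\cl D$-legs range over a single finite set independent of $i$. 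It is at best weak*-dense, but $\tilde{f}_{\frak{u}}$ is not a normal state (ultralimits of normal states are not normal), so weak*-density in $\tilde{\cl M}$ does not transfer to $\|\cdot\|_{2,\tilde{f}_{\frak{u}}}$-density in $\tilde{H}_{\frak{u}}$. Indeed the density can fail outright: take $\cl M_i=\bb{C}$ for all $i$ and $\cl D$ diffuse, say $\cl D=L^{\infty}(0,1)$. Then $H\otimes K_{\frak{u}}=L^2(\cl D,\delta)$, while $\tilde{\cl M}/\tilde{\cl J}_{\frak{u}}$ is the tracial ultrapower $\cl D^{\frak{u}}$ and the range of $V$ consists of classes of constant sequences; the class of a sequence $(r_i)_i$ of Rademacher functions satisfies $\lim_{\frak{u}}\|r_i-a\|_2^2=1+\|a\|_2^2\geq 1$ for every $a\in\cl D$, so it is not in the closed range of $V$. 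The same example defeats the assertion in your last paragraph that the image of $\cl D\odot\cl M$ generates $\tilde{\cl M}/\tilde{\cl J}_{\frak{u}}$ as a von Neumann algebra, which you need in order to pass from the elementary-tensor intertwining to the equality (\ref{eq_unitequi}).

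In fairness, the paper's proof disposes of this point with the single sentence ``since the range of $V$ is dense, $V$ is unitary'', so you have reproduced the argument faithfully, including its weakest link; but a blind proof must supply that density, and the route you sketch for it is not viable. What your computations (and the paper's) genuinely establish is that $V$ is an isometry and that $c\otimes\pi_{\frak{u}}(q((z_i)_i))\mapsto\tilde{\pi}_{\frak{u}}(\tilde{q}((c\otimes z_i)_i))$ extends to a normal, trace-preserving $*$-monomorphism of $\cl D\bar\otimes\pi_{\frak{u}}(\cl M/\cl J_{\frak{u}})$ into $\tilde{\pi}_{\frak{u}}(\tilde{\cl M}/\tilde{\cl J}_{\frak{u}})$, implemented by $V$ on the range projection $VV^*$. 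That embedding is all that the subsequent applications use (for instance, to regard $VDV^*$ as a unitary in the larger ultraproduct and lift it), but the stated equality (\ref{eq_unitequi}) and the unitarity of $V$ require a separate argument for the density of the range, which neither your proposal nor the sketch you are emulating provides.
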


\begin{proof}
Suppose that $(x_i)_{i\in I}\in \cl J_{\frak{u}}$; then 
$\lim_{\frak{u}} \tau_i(x_i^*x_i) = 0$. 
For $a\in \cl D$, we have that 
$$(\delta\otimes\tau_i)((a\otimes x_i)^*(a\otimes x_i)) 
= \delta(a^*a)\tau_i(x_i^*x_i),$$
and hence
$\lim_{\frak{u}}(\delta\otimes\tau_i)((a\otimes x_i)^*a\otimes x_i)) = 0$,
implying that 
$(a\otimes x_i)_{i\in I}\in \tilde{\cl J}_{\frak{u}}$. 
Thus, $a\otimes (x_i)_{i\in I}\in \ker(\tilde{V})$, and hence 
$\cl D\odot \cl J_{\frak{u}}\subseteq \ker(\tilde{V})$. 

Write $q : \cl M\to  \cl M/\cl J_{\frak{u}}$ and $\tilde{q} : \tilde{\cl M}\to \tilde{\cl M}/\tilde{\cl J_{\frak{u}}}$
for the quotient maps. 
We show that the induced operator $V : \cl D\odot \left(\cl M/\cl J_{\frak{u}}\right) \to \tilde{\cl M}/\tilde{\cl J_{\frak{u}}}$ is inner product-preserving. 
Let $a_k, b_k \in \cl D$, $x_{i,k}, y_{i,k}\in \cl M_i$, and set 
$\tilde{x}_k = (x_{i,k})_{i\in I}$ and $\tilde{y}_k = (y_{i,k})_{i\in I}$, $k = 1,\dots,n$. 
Using the linearity of the limit along $\frak{u}$, we have that 
\begin{eqnarray*}
& & \left\langle \sum_{k=1}^n a_k\otimes q(\tilde{x}_k), 
\sum_{l=1}^n b_k\otimes q(\tilde{y}_k)\right\rangle 
= 
\sum_{k,l=1}^n \left\langle a_k, b_l\right\rangle
\left\langle q(\tilde{x}_k), q(\tilde{y}_l) \right\rangle\\
& = & 
\sum_{k,l=1}^n \delta(b_l^*a_k)
\lim\hspace{-0.05cm}\mbox{}_{\frak{u}}\tau_i(y_{i,l}^*x_{i,k})
= 
\lim\hspace{-0.05cm}\mbox{}_{\frak{u}}
\sum_{k,l=1}^n \delta(b_l^*a_k)\tau_i(y_{i,l}^*x_{i,k})\\
& = & 
\left\langle V\left(\sum_{k=1}^n a_k\otimes q(\tilde{x}_k)\right), 
V\left(\sum_{k=1}^n b_k\otimes q(\tilde{y}_k)\right)\right\rangle. 
\end{eqnarray*}
It follows that $V$ extends to an isometry (denoted in the same way) from 
$H\otimes K_{\frak{u}}$ into $\tilde{H}_{\frak{u}}$. 
Since the range of $V$ is dense, $V$ is unitary. 
In the notation of the previous paragraph,

\begin{eqnarray*}
& &
\left\langle 
\tilde{\pi}_{\frak{u}}(\tilde{q}((c\otimes z_i)_{i\in I}))
V\left(\sum_{k=1}^n a_k\otimes q(\tilde{x}_k)\right), 
V\left(\sum_{k=1}^n b_k\otimes q(\tilde{y}_k)\right)\right\rangle\\
& = & 
\left\langle \hspace{-0.1cm}
\tilde{\pi}_{\frak{u}}(\tilde{q}((c \hspace{-0.05cm} \otimes \hspace{-0.05cm} z_i)_{i\in I}))
\hspace{-0.1cm}\left(\sum_{k=1}^n \tilde{q}((a_k\hspace{-0.05cm} \otimes \hspace{-0.05cm} x_{i,k})_{i\in I})\right), 
\left(\sum_{k=1}^n \tilde{q}((b_k \hspace{-0.05cm} \otimes \hspace{-0.05cm} y_{i,k})_{i\in I})\right)
\hspace{-0.1cm}\right\rangle\\
& = & 
\lim\hspace{-0.05cm}\mbox{}_{\frak{u}}
\sum_{k,l=1}^n \delta(b_l^*ca_k)\tau_i(y_{i,l}^*z_{i}x_{i,k})\\
& = & 
\left\langle 
\pi_{\frak{u}}\left(c\otimes q((z_i)_{i\in I})\right)
\left(\sum_{k=1}^n a_k\otimes q(\tilde{x}_k)\right), 
\sum_{k=1}^n b_k\otimes q(\tilde{y}_k) \right\rangle.
\end{eqnarray*}
The proof is complete. 
\end{proof}

Althought the following fact is certainly known, we give an elementary proof using the previous Lemma as it will be useful in the sequel. 
\begin{remark}\label{r_tensultra}
   \rm The tensor product of two ultraproducts  is again an ultraproduct. Indeed, let $ (\cl M_{i},\tau_{\cl M_{i}})_{i\in I}$ and $ (\cl N_{i},\tau_{\cl N_{j}})_{j\in J}$ be families of tracial von Neumann algebras and $ \frak{u}$ and $ \frak{w}$ be free ultrafilters on $ I$ and $ J$, respectively. Let also $( \cl M^{\frak{u}},\tau_{\frak{u}})$ and $(\cl N^{\frak{w}},\tau_{\frak{w}}) $ denote their ultraproducts with respect to the ultrafilters $ \frak{u}$ and $\frak{w}$, respectively. By Lemma \ref{l_tenuni}, we have that $ \cl M^{\frak{u}} \bar \otimes \cl N^{\frak{w}} \cong  \tilde{\cl N}/\tilde{\cl J_{\frak{w}}} $ where $ \tilde{\cl N} = \oplus^{\infty}_{j \in J} \cl M^{\frak{u}} \bar \otimes \cl N_{j}$ and $\tilde{\cl J_{\frak{w}}}$ is the induced ideal as in the discussion before Lemma \ref{l_tenuni}. Moreover, if we assume that  $ \cl M_{k}$ and $ \cl N_{l}$, with $ k , l \in \bb N$, are matrix algebras, equipped with the normalised traces, then applying Lemma \ref{l_tenuni} once more, we have that $  \cl M^{\frak{u}} \bar \otimes \cl N_{l} \cong ( \oplus^{\infty}_{k \in \bb N} \cl M_{k} \bar \otimes \cl N_{l} )/ \tilde{\cl J_{\frak{u},l}} $ which is an ultraproduct of matrix algebras for each $ l \in \bb N$. Combined with \cite[Corollary 12.6]{Pi2} we obtain that $\cl M^{\frak{u}} \bar \otimes \cl N^{\frak{w}}$ in this case can be embedded in a trace preserving way into an ultraproduct of matrix algebras.
\end{remark}

\begin{theorem}\label{p_closures}
Let $H$ be a separable Hilbert space and $\cl D\subseteq \cl B(H)$ be a 
von Neumann algebra. 
The following hold:
\begin{itemize}
\item[(i)] 
the sets $\frak{D}_{{\rm loc},\cl D}^{\rm ex}(H)$ and
$\frak{D}_{{\rm qc},\cl D}^{\rm ex}(H)$ are closed in the point-weak* topology;

\item[(ii)] 
if $(\cl D,\delta)$ is a tracial von Neumann algebra acting on 
$H = L^2(\cl D,\delta)$ then 
the set $\frak{D}_{{\rm qa},\cl D}^{\rm ex}(H)$ is the closure 
of $\frak{D}_{{\rm q},\cl D}^{\rm ex}(H)$ in the point-weak* topology.
\end{itemize}
\end{theorem}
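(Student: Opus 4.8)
The plan is to treat the two parts of Theorem~\ref{p_closures} separately, since the closure statement in (i) and the closure-of-a-smaller-class statement in (ii) need different inputs.

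\smallskip

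\noindent\textbf{Part (i).} First I would observe that, by Theorem~\ref{tar1}, a map $\Phi$ lies in $\frak{D}_{{\rm t},\cl D}^{\rm ex}(H)$ (for ${\rm t}\in\{{\rm loc},{\rm qc}\}$) precisely when there exist a finite tracial von Neumann algebra $(\cl N,\tau_{\cl N})$ (abelian in the ${\rm loc}$ case) acting on a separable $K$ and a unitary $D\in\cl D\bar\otimes\cl B(K)$ returning to $\cl N$ and satisfying trace preservation with $\Phi=\Phi_D$. Suppose $\Phi_n\to\Phi$ point-weak* with $\Phi_n=\Phi_{D_n}$, $D_n\in\cl D\bar\otimes\cl B(K_n)$, ancilla $(\cl N_n,\tau_n)$. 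The key step is to realise all of these inside a single ``universal'' ancilla by taking an ultraproduct: choose a free ultrafilter $\frak{u}$ on $\bb N$, form $\cl N^{\frak u}=(\oplus_n^\infty\cl N_n)/\cl J_{\frak u}$ with its trace $\tau_{\frak u}$, which is again finite, and abelian when each $\cl N_n$ is. Then I would use Lemma~\ref{l_tenuni} (with $\cl M_i=\cl B(K_i)$, or rather the version needed here — one needs $\cl D\bar\otimes(\oplus^\infty\cl B(K_n))/\tilde{\cl J}_{\frak u}\cong\oplus^\infty(\cl D\bar\otimes\cl B(K_n))/\tilde{\cl J}_{\frak u}$, which is exactly the content of that lemma applied to the appropriate families) to regard $(D_n)_n$ as a single unitary $D\in\cl D\bar\otimes\cl B(K^{\frak u})$, where $K^{\frak u}$ is the Hilbert space on which the ultraproduct acts. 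The property ``returns to $\cl N^{\frak u}$'' is checked on the discrete coordinates $d^{(n)}_{i,j}=L_{\epsilon_{j,i}}(D_n)$ via Lemma~\ref{l_returns}(i): the class $[(d^{(n)}_{i,j})_n]\in\cl B(K^{\frak u})$ of these coordinates has products $[(d^{(n)}_{i,k}{}^*d^{(n)}_{l,j})_n]$ lying in $\cl N^{\frak u}$ since each $d^{(n)}_{i,k}{}^*d^{(n)}_{l,j}\in\cl N_n$. Trace preservation passes to the limit along $\frak u$ because $(\tr\otimes\tau_n)(D_n^*(\epsilon_{i_0,i_0}\otimes 1)D_n)=1$ for each $n$ (after fixing a common basis and common index $i_0$, using unitality of $\Phi$). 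Finally, $\Phi_{D}=\lim_{\frak u}\Phi_{D_n}$ in the point-weak* topology by construction of the ultraproduct trace; since the net $(\Phi_n)$ converges to $\Phi$, the ultrafilter limit agrees with $\Phi$, so $\Phi=\Phi_D\in\frak{D}_{{\rm t},\cl D}^{\rm ex}(H)$.

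\smallskip

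\noindent\textbf{Part (ii).} Here I would show two inclusions. First, $\overline{\frak{D}_{{\rm q},\cl D}^{\rm ex}(H)}\subseteq\frak{D}_{{\rm qa},\cl D}^{\rm ex}(H)$: run the same ultraproduct argument as in part (i), but now each ancilla $\cl N_n$ is a matrix algebra $M_{k_n}$ with normalised trace, so $\cl N^{\frak u}$ is a matricial ultraproduct; hence the resulting $\Phi=\Phi_D$ has a matricial-ultraproduct ancilla, i.e.\ $\Phi\in\frak{D}_{{\rm qa},\cl D}^{\rm ex}(H)$. One subtlety: in the ${\rm qa}$-exact case the defining ancilla need only \emph{embed} in a matricial ultraproduct rather than be one, but the ultraproduct itself \emph{is} such an embedding, so this is fine. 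Conversely, for $\frak{D}_{{\rm qa},\cl D}^{\rm ex}(H)\subseteq\overline{\frak{D}_{{\rm q},\cl D}^{\rm ex}(H)}$: take $\Phi=\Phi_D$ with $D\in\cl D\bar\otimes\cl B(K)$ returning to a finite tracial $\cl N$ that embeds trace-preservingly in a matricial ultraproduct $\cl M^{\frak u}=(\oplus_k^\infty M_{n_k})/\cl J_{\frak u}$. Then the finitely many coordinate products $d_{i,k}^*d_{l,j}\in\cl N\subseteq\cl M^{\frak u}$ lift to matrix-algebra elements $(d^{(k)}_{i,j})_k$, and using that $\cl D\bar\otimes\cl M^{\frak u}\cong(\oplus_k^\infty\cl D\bar\otimes M_{n_k})/\tilde{\cl J}_{\frak u}$ by Lemma~\ref{l_tenuni} (and here is where the hypothesis $\cl D=L^2(\cl D,\delta)$ finite tracial is genuinely used — we need $\cl D$ itself to be tracial so this tensor-ultraproduct identification applies), we represent $D$ by a sequence of unitaries $D_k\in\cl D\bar\otimes M_{n_k}$ (after a standard unitary-perturbation argument to promote approximate-unitaries mod $\tilde{\cl J}_{\frak u}$ to genuine unitaries, possible since $\cl D\bar\otimes M_{n_k}$ is finite). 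Each $D_k$ returns to $M_{n_k}$ and, after a small perturbation, satisfies trace preservation; $\Phi_{D_k}\to_{\frak u}\Phi_D$ point-weak*, exhibiting $\Phi$ as a point-weak* limit of maps in $\frak{D}_{{\rm q},\cl D}^{\rm ex}(H)$.

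\smallskip

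\noindent\textbf{Main obstacle.} I expect the technical heart to be the bookkeeping in the reverse inclusion of (ii): lifting the abstract trace-preserving embedding $\cl N\hookrightarrow\cl M^{\frak u}$ to a \emph{unitary-valued} sequence $D_k\in\cl D\bar\otimes M_{n_k}$ with the \emph{returning} and \emph{trace-preservation} constraints intact. Only finitely many relations (the products $d_{i,k}^*d_{l,j}$, the unitarity relations $\sum_k d^{(k)}_{i,k}{}^*d^{(k)}_{j,k}=\delta_{i,j}1$ up to $\tilde{\cl J}_{\frak u}$, and the one trace-preservation scalar) need to be controlled along $\frak u$, and each can only be arranged \emph{approximately} for $\frak u$-almost every $k$; turning ``approximately unitary and approximately returning'' into genuinely unitary, genuinely returning operators requires a functional-calculus perturbation in the finite von Neumann algebra $\cl D\bar\otimes M_{n_k}$ and a careful choice of which approximations to enforce. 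Compared to \cite{dlem}, the gain is that working with discrete matrix-unit coordinates (as in Lemmas~\ref{l_symbolc} and \ref{l_returns}) reduces everything to countably many scalar conditions, which is exactly what makes the ultrafilter argument go through cleanly.
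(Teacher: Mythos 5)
Your overall strategy coincides with the paper's: tracial ultraproducts of the ancillas give the closedness assertions in (i) and the inclusion $\overline{\frak{D}_{{\rm q},\cl D}^{\rm ex}(H)}\subseteq\frak{D}_{{\rm qa},\cl D}^{\rm ex}(H)$, while a unitary lift from the matricial ultraproduct gives the reverse inclusion in (ii). Part (ii) is essentially right, and the ``main obstacle'' you anticipate is handled more cheaply than you fear: after the identification of Lemma \ref{l_tenuni} (this is indeed where the tracial hypothesis on $\cl D$ enters), a unitary in the tracial ultraproduct $\left(\oplus_k^{\infty}\cl D\bar\otimes M_{n_k}\right)/\tilde{\cl J}_{\frak u}$ lifts to a sequence of \emph{genuine} unitaries $D_k\in\cl D\bar\otimes M_{n_k}$ by \cite[Lemma 11.30]{Pi2} --- no functional-calculus perturbation is needed --- and for a unitary $D_k\in\cl D\bar\otimes M_{n_k}$ both the returning condition and trace preservation are automatic (the latter because $\tr\otimes\tau$ is a trace on $\cl B(H)\bar\otimes M_{n_k}$).

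There are, however, two genuine gaps in part (i). First, you characterise $\frak{D}_{{\rm t},\cl D}^{\rm ex}(H)$ by ``a unitary $D\in\cl D\bar\otimes\cl B(K)$ returning to $\cl N$'', but that is the \emph{non-exact} class $\frak{D}_{{\rm t},\cl D}(H)$; exactness means $D\in\cl D\bar\otimes\cl N$. This matters: with $D_n\in\cl D\bar\otimes\cl B(K_n)$ there is no tracial ultraproduct of the algebras $\cl B(K_n)$ in which to place the coordinates $d^{(n)}_{i,j}$, the classes $[(d^{(n)}_{i,j})_n]$ do not live in $\cl B(K^{\frak u})$ in any natural sense, and even if the construction were repaired the limiting operator would not be exhibited inside $\cl D\bar\otimes(\mbox{ancilla})$, so you could not conclude membership in the \emph{exact} class. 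The argument only works because exactness forces $D_n\in\cl D\bar\otimes\cl N_n$, whence $\oplus_n D_n\in\cl D\bar\otimes\tilde{\cl N}$ with $\tilde{\cl N}=\oplus_n^{\infty}\cl N_n$. Second, the passage from $\oplus_n D_n$ to ``a single unitary $D$'' over the ultraproduct is not automatic: the quotient map $q:\tilde{\cl N}\to\tilde{\cl N}/\cl J_{\frak u}$ is not normal, so there is no normal *-homomorphism $\id\otimes q$ on $\cl D\bar\otimes\tilde{\cl N}$, and the element $D$ determined by $L_\omega(D)=q\big(L_\omega(\oplus_n D_n)\big)$, $\omega\in\cl D_*$, is a priori only a contraction. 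The paper recovers unitarity a posteriori: the matrix-coefficient computation gives $\Phi=\Phi_D$, then $\Phi(1)=1$ forces $(\id\otimes\tau)(D^*D)=1$ and hence $D^*D=1$ (contraction plus faithful trace), while trace preservation of $\Phi$ gives $DD^*=1$. Without some such argument your proof of (i) is incomplete.
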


\begin{proof}
(i) Since $H$ is separable, the point-weak* topology on $\frak{D}_{{\rm qc},\cl D}^{\rm ex}(H)$ 
(which coincides with Arveson's BW topology as defined in \cite{Pa})
is metrisable; to show the closedness of $\frak{D}_{{\rm qc},\cl D}^{\rm ex}(H)$, we 
thus assume that $(\Phi_n)_{n\in \bb{N}}\subseteq \frak{D}_{{\rm qc},\cl D}^{\rm ex}(H)$ is 
a sequence, and $\Phi : \cl B(H)\to \cl B(H)$ is a 
unital completely positive map, such that 
$\Phi_n(x)\xrightarrow{n\to \infty} \Phi(x)$ in the weak* topology, for every $x\in \cl B(H)$. 
Let $(\cl N_n,\tau_n)$ be a finite von Neumann algebra, 
and $D_n\in \cl D\bar\otimes\cl N_n$ be a unitary such that 
$$\Phi_n(x) = (\id\otimes\tau_n)(D_n^*(x\otimes 1_{\cl N_n})D_n), \ \ \ x\in \cl B(H), \ 
n\in \bb{N}.$$
Fix a free ultrafilter 
$\frak{u}$ on $\bb{N}$ and let 
$(\cl N,\tau)$ be the ultraproduct of the family $\{(\cl N_n,\tau_n)\}_{n\in \bb{N}}$
along $\frak{u}$. 
Setting $\tilde{\cl N} = \oplus^{\infty}_{n\in \bb{N}}\cl N_n$ 
($\ell^{\infty}$-direct sum), we have that 
$\cl N$ is *-isomorphic to $\tilde{\cl N}/\cl J_{\frak{u}}$, 
where 
$$\cl J_{\frak{u}} = 
\left\{(z_n)_{n\in\bb{N}} : \lim\hspace{-0.05cm}\mbox{}_{\frak{u}} \tau_n(z_n^*z_n) = 0\right\}.$$ 

Let $\tilde{D} = \oplus^{\infty}_{n\in \bb{N}} D_n$; thus, 
$\tilde{D}\in \oplus^{\infty}_{n\in \bb{N}} \cl D\bar{\otimes}\cl N_n$ and, 
after a canonical identification, we view 
$\tilde{D}$ as an element of $\cl D\bar{\otimes}\tilde{\cl N}$.
Using the canonical identification 
$$\cl D\bar{\otimes}\tilde{\cl N} \equiv 
{\rm CB}(\cl D_*,\tilde{\cl N})$$
\cite[Corollary 7.1.5, Theorem 7.2.4]{er},
we associate with $\tilde{D}$ a completely contractive map 
$\tilde{\Gamma} : \cl D_* \to \tilde{\cl N}$. 
Let $\Gamma = q\circ \tilde{\Gamma}$, where $q$ is the quotient map; thus, 
$\Gamma : \cl D_* \to \cl N$ is completely contractive. 
Let $D\in\cl D\bar\otimes\cl N$ be the contraction that corresponds  to $\Gamma$; 
we thus have that $\Gamma(\omega)=L_\omega(D)$, $\omega \in \cl D_*$. 
We note that, by the definitions of the maps $\Gamma$ and $\tilde{\Gamma}$, 
we have 
$L_\omega (D)=q(L_\omega(\tilde D))$.


Let $D_{i,j} = L_{\epsilon_{j,i}}(D)$, and let 
$\tilde D_{i,j} = L_{\epsilon_{j,i}}(\tilde D)$. 
We have that 
$D_{i,j}\in\cl N$ and $\tilde D_{i,j}\in\tilde{\cl N}$. 
Write $\tilde D_{i,j} = (D_{n,i,j})_{n\in \bb{N}}$, and note that 
$D_{n,i,j} = L_{\epsilon_{j,i}}(D_n)$.

Observe that 
$$\langle\Phi_n(\epsilon_{k,l})e_j,e_i\rangle
=
\tau_n (L_{\epsilon_{j,i}}(D_n^*(\epsilon_{k,l}\otimes 1)D_n))
=
\tau_n(D_{n, k,i}^*D_{n,l,j}).$$
As $\tau(D_{k,i}^*D_{l,j})=\lim_{}\tau_n(D_{n,k,i}^*D_{n,l,j})$ and $\langle\Phi_n(\epsilon_{k,l})e_j,e_i\rangle\to
\langle\Phi(\epsilon_{k,l})e_j,e_i\rangle$, we obtain
$$\langle\Phi(\epsilon_{k,l})e_j,e_i\rangle=\tau(D_{k,i}^*D_{l,j}),$$
and hence $\Phi(x)=(\id\otimes\tau)(D^*(x\otimes 1)D)$.
As $\Phi(1)=1$, we get $(\id\otimes\tau)(D^*D)=1$ and hence $D^*D=1$, as $D$ is a contraction and $\tau$ is faithful. 
On the other hand, $\Phi$ is trace preserving and hence for any $x\in\cl S_1(H)$,
$$\tr(x) = \tr(\Phi(x))=(\tr\otimes\tau)((x\otimes 1_{\cl N})DD^*)$$
and hence $(\id\otimes\tau)(DD^*)=1$ and $DD^*=1$.

We thus showed that 
$\frak{D}_{{\rm qc},\cl D}^{\rm ex}(H)$ is closed in the point-weak* topology.
The fact that $\frak{D}_{{\rm loc},\cl D}^{\rm ex}(H)$ is closed in the point-weak* topology follows from the previous paragraph, together with the fact that 
the ultrapower of a family of abelian von Neumann algebras is 
an abelian von Neumann algebra. 

\smallskip

(ii) 
Suppose that $\Phi \in \frak{D}_{{\rm qa},\cl D}^{\rm ex}(H)$, and let
$\cl M^{\frak{u}}$ be a matricial ultraproduct arising from 
a family $(M_{p_n},\tr)_{n\in \bb{N}}$ of matrix algebras and a free ultrafilter $\frak{u}$, and 
$D\in \cl D\bar\otimes \cl M^{\frak{u}}$ be a unitary, such that 
$\Phi = \Phi_D$. Using the notation from Lemma \ref{l_tenuni}, we have that
$VDV^* \in \tilde{\cl M}^{\frak{u}}$. 
Let 
$\tilde{D}\in \oplus^{\infty}_{n\in \bb{N}} \cl D\otimes M_{p_n}$ be a unitary lift of $VDV^*$, 
say 
$\tilde{D} = (D_n)_{n\in \bb{N}}$, where $D_n\in \cl D\otimes M_{p_n}$, 
$n\in \bb{N}$, see \cite[Lemma 11.30]{Pi2}.
Writing $D_{n,i,j} = L_{\epsilon_{j,i}}(D_n)$, we have that 
$$\tau(\tilde{D}_{k,i}^* \tilde{D}_{l,j}) = \lim\hspace{-0.05cm}\mbox{}_{\frak{u}} \tau_n(D_{n,k,i}^*D_{n,l,j}), \ \ \ i,j,k,l\in \bb{I}.$$
It follows that 
\begin{equation}\label{conv}
\langle \Phi(\epsilon_{k,l}), \epsilon_{i,j}\rangle = 
\lim\hspace{-0.05cm}\mbox{}_{\frak{u}} 
\langle \Phi_{D_n}(\epsilon_{k,l}), \epsilon_{i,j}\rangle, \ \ \ i,j,k,l\in \bb{I}.
\end{equation}
As $(\Phi_{D_n})_{n\in \bb{N}}$ is bounded in $\cl B(\cl B(H))$ and bounded sets of $\cl B(H)$ are precompact in the weak* topology, there is a map $\Phi_{\frak u}\in \cl B(\cl B(H))$ which is a cluster point in the point weak* topology. By (\ref{conv}), $\Phi_{\frak u}=\Phi$, and hence $\Phi$ is in the closure of $\frak{D}_{{\rm q},\cl D}^{\rm ex}(H)$.

To complete the proof of (ii), it suffices to show that 
$\frak{D}_{{\rm qa},\cl D}^{\rm ex}(H)$ is closed in point-weak* topology. 
This follows from the arguments in (i), taking into account the fact that 
the class of matricial ultraproducts is closed under taking ultrapowers
(see the comments before \cite[Corollary 12.6]{Pi2}).
\end{proof}

We note that, in the case where $\cl D$ is a maximal abelian von Neumann algebra, 
say $\cl D \equiv L^{\infty}(X,\mu)$ acting by multiplication on $H = L^2(X,\mu)$, 
the set $\frak{D}_{{\rm qc},\cl D}(H)$ coincides with the absolutely dilatable 
measurable Schur multipliers over $X\times X$
(see Corollary \ref{Schur_mult}). 
We complement this with the next corollary regarding 
the different classes of 
absolutely dilatable measurable Schur multipliers.

\begin{corollary}\label{c_Schuhi}
Let $(X,\mu)$ be a standard measure space, and $\cl D \equiv L^{\infty}(X,\mu)$, acting by multiplication on the Hilbert space $H = L^2(X,\mu)$. Then 
\begin{itemize}
\item[(i)] 
$\frak{D}_{{\rm qc},\cl D}(H) = \frak{D}_{{\rm qc},\cl D}^{\rm ex}(H)$
and hence $\frak{D}_{{\rm qc},\cl D}(H)$ is closed in the point-weak* topology; 

\item[(ii)]
$\frak{D}_{{\rm qa},\cl D}(H) = \frak{D}_{{\rm qa},\cl D}^{\rm ex}(H)$
and hence $\frak{D}_{{\rm qa},\cl D}(H)$ coincides with the closure 
in the point-weak* topology of $\frak{D}_{{\rm q},\cl D}(H)$.
\end{itemize}
\end{corollary}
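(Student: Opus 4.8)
The plan is to establish, when $\cl D=L^\infty(X,\mu)$ is maximal abelian, the single statement $\frak{D}_{{\rm t},\cl D}(H)=\frak{D}_{{\rm t},\cl D}^{\rm ex}(H)$ for every ${\rm t}\in\{{\rm loc},{\rm q},{\rm qa},{\rm qc}\}$; both parts of the corollary then follow upon combining this with Theorem \ref{p_closures}.

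For this one only has to prove $\frak{D}_{{\rm t},\cl D}(H)\subseteq\frak{D}_{{\rm t},\cl D}^{\rm ex}(H)$, the reverse being trivial. Fix $\Phi\in\frak{D}_{{\rm t},\cl D}(H)$ together with an ancilla $\cl A$ of type ${\rm t}$ realising a separable absolute dilation of $\Phi$ (available by the standing hypothesis of Section \ref{s_hier} and Remark \ref{r_sepancilla}). Since $\cl D$ is maximal abelian, $\cl D'=\cl D$, and $\Phi$ — being unital, completely positive (hence completely bounded) and weak* continuous — equals $S_\phi$ for a Schur multiplier $\phi\in L^\infty(X\times X)$. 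By Corollary \ref{Schur_mult} there exist a finite tracial von Neumann algebra $(\cl N,\tau_{\cl N})$ with separable predual and a unitary $d\in L^\infty_\sigma(X,\cl N)$ with $\phi(x,y)=\tau_{\cl N}(d(x)^*d(y))$ a.e. Viewing $d$ as a unitary $D\in L^\infty(X)\bar\otimes\cl N=\cl D\bar\otimes\cl N$, one checks that $D$ is admissible in Theorem \ref{tar1}(ii): it returns to $\cl N$ because $D\in\cl D\bar\otimes\cl N$ (criterion (iv) of Lemma \ref{l_returns}), and it satisfies trace preservation because $\phi(x,x)=\tau_{\cl N}(1)=1$ makes $\Phi$ trace preserving, so $(\tr\otimes\tau_{\cl N})(D^*(ee^*\otimes 1_{\cl N})D)=\tr(\Phi(ee^*))=1$ for a unit vector $e\in H$; moreover the symbol computation in the proofs of Theorem \ref{tar1} and Corollary \ref{Schur_mult} gives $u_{\Phi_D}(x,y)=\tau_{\cl N}(d(x)^*d(y))=\phi(x,y)$, so $\Phi_D=S_\phi=\Phi$. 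It remains to see that $\cl N$ may be taken of type ${\rm t}$, and this is where one must look inside the proofs: running the proof of Theorem \ref{tar1}(i)$\Rightarrow$(ii) (which is invoked inside the proof of Corollary \ref{Schur_mult}) from the canonical dilation $\cl M=\cl B(H)\bar\otimes\cl A$, $J(x)=x\otimes 1$, of Remark \ref{r_ancilla}, the algebra $\cl N$ it produces is $\cl N_1\otimes I_{H_3}$ with $\cl N_1=(\epsilon_{i_0,i_0}\otimes 1_{\cl A})\cl M(\epsilon_{i_0,i_0}\otimes 1_{\cl A})\cong\cl A$, the isomorphism being trace preserving since $\tr(\epsilon_{i_0,i_0})=1$. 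Hence $\cl N$ is trace-preservingly $*$-isomorphic to $\cl A$ and inherits its type (abelian, finite-dimensional, or embeddable in a matricial ultraproduct). Thus $\Phi\in\frak{D}_{{\rm t},\cl D}^{\rm ex}(H)$.

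Granting $\frak{D}_{{\rm t},\cl D}(H)=\frak{D}_{{\rm t},\cl D}^{\rm ex}(H)$, part (i) is immediate with ${\rm t}={\rm qc}$ (for which no type bookkeeping is needed at all): $\frak{D}_{{\rm qc},\cl D}(H)=\frak{D}_{{\rm qc},\cl D}^{\rm ex}(H)$, which by Theorem \ref{p_closures}(i) is closed in the point-weak* topology. For part (ii), since $(X,\mu)$ is standard we may replace $\mu$ by an equivalent probability measure; then $\cl D=L^\infty(X,\mu)$ is a finite tracial von Neumann algebra acting on $H=L^2(\cl D,\delta)$, with $\delta$ the tracial state of integration against $\mu$, so Theorem \ref{p_closures}(ii) applies and gives $\frak{D}_{{\rm qa},\cl D}^{\rm ex}(H)=\overline{\frak{D}_{{\rm q},\cl D}^{\rm ex}(H)}$ in the point-weak* topology. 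Using the equalities of the previous paragraph for ${\rm t}={\rm qa}$ and ${\rm t}={\rm q}$ this reads $\frak{D}_{{\rm qa},\cl D}(H)=\overline{\frak{D}_{{\rm q},\cl D}(H)}$, as required.

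The only delicate point is the type assertion in the second paragraph: Corollary \ref{Schur_mult}, as stated, does not record the type of the ancilla $\cl N$, so one must re-examine the proofs of Theorem \ref{tar1} and Corollary \ref{Schur_mult} and observe that the sole structural operation performed on a given ancilla $\cl A$ is passage to a corner $q\cl M q$ with $\tau_{\cl M}(q)=1$ — an operation that clearly preserves commutativity, finite-dimensionality, and trace-preserving embeddability into a matricial ultraproduct. The remaining verifications (admissibility of $D$, the symbol identity, and the reduction of $(X,\mu)$ to a probability space) are routine.
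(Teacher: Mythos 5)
Your proposal is correct and follows essentially the same route as the paper: the paper's own proof simply states that the arguments of Corollary \ref{Schur_mult} yield $\frak{D}_{{\rm t},\cl D}(H)=\frak{D}_{{\rm t},\cl D}^{\rm ex}(H)$ for all types ${\rm t}$ and then invokes Theorem \ref{p_closures}, which is exactly your strategy. Your additional bookkeeping — tracking that the ancilla produced in Theorem \ref{tar1}(i)$\Rightarrow$(ii) is the corner $(\epsilon_{i_0,i_0}\otimes 1_{\cl A})\cl M(\epsilon_{i_0,i_0}\otimes 1_{\cl A})\cong\cl A$, hence of the same type, and normalising $\mu$ so that Theorem \ref{p_closures}(ii) applies — correctly fills in details the paper leaves implicit.
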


\begin{proof}
The arguments in the proof of Corollary \ref{Schur_mult} show that $\frak{D}_{{\rm t},\cl D}(H)=\frak{D}_{{\rm t},\cl D}^{\rm ex}(H)$ for $\rm t\in\{loc, q, qa, qc\}$. The statement now follows from Theorem \ref{p_closures}. 
\end{proof}

Write $\text{Aut}(\cl B(H))$ for the set of weak$^*$-continuous  automorphisms of $\cl B(H)$, that is, 
$\text{Aut}(\cl B(H))=\{x\mapsto u^*xu: u \text{ is unitary}\}$ (see for example \cite[II.5.5.14]{blackadar}). 
If $\cl D\subseteq \cl B(H)$ is a von Neumann algebra, denote by $\text{Aut}_{\cl D'}(\cl B(H))$ the automorphisms which are $\cl D'$-modular. 
Clearly, if $u^*xdu=u^*xud$ for all $x\in \cl B(H)$ and $d\in\cl D'$; taking $x=u$ we obtain $ud=du$, and hence $u\in\cl D$. Therefore, $$\text{Aut}_{\cl D'}(\cl B(H))=\{x\mapsto u^*xu: u\in\cl D \text{ is unitary}\}.$$ Write 
$\overline{{\rm conv}}({\rm Aut}_{\cl D'}(\cl B(H))$ for the closed convex hull of  
${\rm Aut}_{\cl D'}(\cl B(H))$ with respect to pointwise weak$^*$ topology.

\begin{lemma}\label{l_informu}
Let $H$ be a separable Hilbert space, 
$\cl D\subseteq \cl B(H)$ be a von Neumann algebra and 
$(X,\mu)$ be a standard measure space. 
Let $D\in \cl D\bar\otimes L^{\infty}(X,\mu)$, and 
write $d : X\to \cl D$ for the function, associated with $D$.
Then 
\begin{equation}\label{eq_informu}
\int_X \langle d(x)^*zd(x)\xi,\eta\rangle d\mu(x) =
\left\langle (z\otimes 1)D(\xi\otimes 1), D(\eta\otimes 1)\right\rangle \ \ \ z\in \cl B(H), \xi,\eta\in H.
\end{equation}
\end{lemma}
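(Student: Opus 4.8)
The plan is to unwind both sides of \eqref{eq_informu} into integrals over $X$ using the identification $\cl D\bar\otimes L^\infty(X,\mu) = L^\infty_\sigma(X,\cl D)$ discussed before Corollary \ref{Schur_mult}, and reduce everything to the defining property of the slice maps $L_\omega$. Concretely, under this identification $D$ corresponds to the $w^*$-measurable essentially bounded function $d:X\to\cl D$, and the Hilbert space $H\otimes L^2(X,\mu)$ is identified with $L^2(X,\mu;H)$, the space of (Bochner-measurable) square-integrable $H$-valued functions, in such a way that the operator $D$ acts by $(D\zeta)(x) = d(x)\zeta(x)$ for $\zeta\in L^2(X,\mu;H)$. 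Under this identification the vector $\xi\otimes 1$ (where $1$ is the constant function, which lies in $L^2$ since $\mu$ is finite — or, if not, one first reduces to the finite-measure case, but in the situations where this lemma is applied $\mu(X)<\infty$) corresponds to the constant function $x\mapsto\xi$.

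First I would record that, for $\zeta_1,\zeta_2\in L^2(X,\mu;H)$, the inner product in $H\otimes L^2(X,\mu)$ is $\langle\zeta_1,\zeta_2\rangle = \int_X\langle\zeta_1(x),\zeta_2(x)\rangle\,d\mu(x)$; this is just the standard description of the Hilbertian tensor product $H\otimes L^2(X,\mu)\cong L^2(X,\mu;H)$. Then for $z\in\cl B(H)$ the operator $z\otimes 1$ acts pointwise as $((z\otimes 1)\zeta)(x) = z\zeta(x)$. Applying this to $\zeta_1 = (z\otimes 1)D(\xi\otimes 1)$ and $\zeta_2 = D(\eta\otimes 1)$, and using that $(D(\xi\otimes 1))(x) = d(x)\xi$ and $(D(\eta\otimes 1))(x) = d(x)\eta$, we get
\begin{align*}
\langle (z\otimes 1)D(\xi\otimes 1), D(\eta\otimes 1)\rangle
&= \int_X \langle z\,d(x)\xi,\, d(x)\eta\rangle\,d\mu(x)\\
&= \int_X \langle d(x)^* z\, d(x)\xi,\,\eta\rangle\,d\mu(x),
\end{align*}
which is exactly the left-hand side of \eqref{eq_informu}.

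The only genuine point requiring care — and the place I expect the main technical obstacle — is justifying that the pointwise formula $(D\zeta)(x) = d(x)\zeta(x)$ really describes the operator $D\in\cl D\bar\otimes L^\infty(X,\mu)$ on all of $H\otimes L^2(X,\mu)$, not merely on elementary tensors. The clean way is to verify it on elementary tensors $\xi\otimes f$ with $\xi\in H$, $f\in L^\infty(X,\mu)$ — where it follows immediately from $D = \int^{\oplus} d(x)\,d\mu(x)$ as a decomposable operator over the measure space $X$ — and then invoke density of such tensors together with the boundedness of $D$ and of the multiplication operator $\zeta\mapsto d(\cdot)\zeta(\cdot)$ (bounded by $\esssup_x\|d(x)\| = \|D\|$). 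Equivalently, one may avoid decomposable-operator language entirely and argue via the slice maps: for fixed rank-one $\omega = \eta\xi^*\in\cl S_1(H)$ one has $\langle (z\otimes 1)D(\xi\otimes 1), D(\eta\otimes 1)\rangle$ depending on $D$ only through $L_\omega$-type expressions, and then use that $L_{\eta\xi^*}(D)$ is represented by the function $x\mapsto \langle d(x)\xi,\eta\rangle$; I would present whichever of these is shortest. Finally, measurability of $x\mapsto\langle d(x)^*zd(x)\xi,\eta\rangle$ and integrability (so that the left-hand side makes sense) follow from $w^*$-measurability of $d$ together with the essential boundedness of $\|d(\cdot)\|$ and the finiteness assumption underlying the application; I would remark on this in one line.
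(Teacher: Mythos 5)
Your proposal is correct, and at bottom it rests on the same mechanism as the paper's proof: verify the formula when $D$ is an elementary tensor (or a finite sum of them), then pass to general $D$ by a density-plus-boundedness argument. The difference is in what is carried through the limit. You prove the stronger intermediate statement that, under $H\otimes L^2(X,\mu)\cong L^2(X,\mu;H)$, the operator $D$ acts fibrewise as $(D\zeta)(x)=d(x)\zeta(x)$ (i.e.\ $D$ is the decomposable operator $\int^{\oplus}d(x)\,d\mu(x)$), after which \eqref{eq_informu} is a one-line computation; you correctly isolate the compatibility of the Sakai identification $\cl D\bar\otimes L^\infty(X,\mu)=L^\infty_\sigma(X,\cl D)$ with the direct-integral picture as the only point needing care. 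The paper instead avoids asserting the pointwise action of $D$: it first reduces to $z\ge 0$ and $\eta=\xi$ by polarisation, checks the scalar identity on $\cl D\odot L^\infty(X,\mu)$, and then approximates a general $D$ in the strong operator topology by a sequence $(D_n)$ in the unit ball of $\cl D\odot L^\infty(X,\mu)$ (Kaplansky plus separability), extracting a subsequence with $d_{n_k}(x)\xi\to d(x)\xi$ almost everywhere and concluding by dominated convergence. Your route is shorter if one is willing to cite the standard decomposable-operator description; the paper's is more self-contained and never needs the full fibrewise statement. Your side remark that the right-hand side only makes sense when $1\in L^2(X,\mu)$, i.e.\ $\mu(X)<\infty$, is a fair observation about the statement itself (in the application $\mu$ is a probability measure), and does not affect the correctness of either argument.
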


\begin{proof}
By polarisation and the fact that the positive operators
span $\cl B(H)$, in order to prove (\ref{eq_informu}), 
we may assume that $z\in \cl B(H)^+$ and $\eta = \xi$. 
Assume, without loss of generality, that $\|D\| = 1$.
Suppose first that $D = u\otimes h$ for some $u\in \cl D$ and some 
$h\in L^{\infty}(X,\mu)$. Then $d(x) = h(x)u$, $x\in X$, and hence 
\begin{eqnarray*} 
\left\langle (z\otimes 1)D(\xi\otimes 1), D(\xi\otimes 1)\right\rangle
& = & 
\left\langle zu\xi\otimes h, u\xi\otimes h)\right\rangle\\
& = &   
\langle u^*zu\xi,\xi\rangle \hspace{-0.2cm} 
\int_X \hspace{-0.2cm} |h(x)|^2 d\mu(x)
\hspace{-0.1cm} = \hspace{-0.2cm}
\int_X \hspace{-0.2cm} \langle d(x)^*zd(x)\xi,\xi\rangle d\mu(x).
\end{eqnarray*}  
By linearity, (\ref{eq_informu}) holds true if 
$D \in \cl D\odot L^{\infty}(X,\mu)$.

Now assume that $D$ is an arbitrary element of 
$\cl D\bar\otimes L^{\infty}(X,\mu)$. By the separability and the 
standardness assumptions, there exists a sequence 
$(D_n)_{n\in \bb{N}}$ in the unit ball of $\cl D\odot L^{\infty}(X,\mu)$, 
such that 
$D_n\to_{n\to\infty} D$ in the strong operator topology. 
Then 
\begin{equation}\label{eq_DntoD}
D_n(\xi\otimes 1)\to_{n\to \infty} D(\xi\otimes 1).
\end{equation}
Let $d_n : X\to \cl D$ be the function, canonically associated with 
$D_n$, $n\in \bb{N}$. 
By assumption, we have 
$$\int_X \|d_n(x)\xi - d(x)\xi\|^2 d\mu(x) \to_{n\to \infty} 0;$$
let $(n_k)_{k\in \bb{N}}\subseteq \bb{N}$ be such that 
$$\left\|d_{n_k}(x)\xi - d(x)\xi\right\| \to_{k\to \infty} 0 
\ \mbox{ for almost all } x\in X.$$
Using the Lebesgue Dominated Convergence Theorem, 
it now easily follows that 
$$\int_X \hspace{-0.1cm} \langle d_{n_k}(x)^*zd_{n_k}(x)\xi,\xi\rangle d\mu(x)
\to_{k\to \infty}
\int_X \hspace{-0.1cm} \langle d(x)^*zd(x)\xi,\xi\rangle d\mu(x),$$
and (\ref{eq_informu}) is established taking into account (\ref{eq_DntoD}). 
\end{proof}

\begin{theorem}\label{th_36}
    Let $H$ be a separable Hilbert space 
    and $\cl D\subseteq \cl B(H)$ be a von Neumann algebra. The following are equivalent for a weak$^*$ continuous, unital, completely positive map $\Phi:\cl B(H)\to\cl B(H)$:
    \begin{itemize}
    \item[(i)] 
    $\Phi\in \frak{D}_{{\rm loc},\cl D}(H)$;
    \item[(ii)] 
    $\Phi\in \frak{D}_{{\rm loc},\cl D}^{\rm ex}(H)$;
    \item[(iii)] $\Phi\in \overline{{\rm conv}}({\rm Aut}_{\cl D'}(\cl B(H))$.
    \end{itemize}
\end{theorem}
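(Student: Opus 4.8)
The plan is to establish the cycle (iii)$\Rightarrow$(ii)$\Rightarrow$(i)$\Rightarrow$(iii). The implication (ii)$\Rightarrow$(i) is immediate, since an abelian ancilla is in particular an ancilla, so a loc-exact factorisation is a loc factorisation. For (iii)$\Rightarrow$(ii), I would first observe that each element of ${\rm Aut}_{\cl D'}(\cl B(H))$, namely $x\mapsto u^*xu$ with $u\in\cl D$ unitary, is of the form $\Phi_D$ with $D = u\otimes 1 \in \cl D\bar\otimes\bb{C}$, an exact factorisation through the trivial (abelian) ancilla $\bb{C}$; so ${\rm Aut}_{\cl D'}(\cl B(H)) \subseteq \frak{D}_{{\rm loc},\cl D}^{\rm ex}(H)$. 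By Proposition \ref{p_convex}, $\frak{D}_{{\rm loc},\cl D}^{\rm ex}(H)$ is convex, and by Theorem \ref{p_closures}(i) it is closed in the point-weak* topology; hence it contains $\overline{{\rm conv}}({\rm Aut}_{\cl D'}(\cl B(H)))$, giving (iii)$\Rightarrow$(ii).

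The substantive implication is (i)$\Rightarrow$(iii): if $\Phi$ admits an abelian ancilla, I want to write it as a point-weak* limit of convex combinations of conjugations by unitaries in $\cl D$. By Theorem \ref{tar1}, $\Phi = \Phi_D$ for a unitary $D$ in $\cl D\bar\otimes\cl B(K)$ returning to an abelian ancilla $\cl N$; since $H$ is separable we may take $\cl N$ with separable predual, so $\cl N \cong L^{\infty}(X,\mu)$ for a standard measure space $(X,\mu)$ equipped with a probability measure $\mu$ corresponding to $\tau_{\cl N}$. The exact-factorisation machinery in Theorem \ref{th_36}(ii) — which we are proving is equivalent — suggests we should first upgrade to a loc-exact factorisation, i.e. choose $D\in \cl D\bar\otimes L^{\infty}(X,\mu)$; this is exactly the content that the loc case of Theorem \ref{tar1}, together with the returning condition, forces $d_{i,j}^*d_{k,l}\in\cl N$ and after the identification $\cl D\bar\otimes L^\infty(X,\mu) = L^\infty_\sigma(X,\cl D)$ (modularity over $\cl D'$ placing the legs in $\cl D$), one gets a unitary-valued function $d : X\to\cl D$ with $D$ its associated operator. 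Then Lemma \ref{l_informu} applies directly: for all $z\in\cl B(H)$ and $\xi,\eta\in H$,
\begin{equation*}
\langle \Phi_D(z)\xi,\eta\rangle
= \langle (\id\otimes\tau_{\cl N})(D^*(z\otimes 1)D)\xi,\eta\rangle
= \int_X \langle d(x)^*zd(x)\xi,\eta\rangle\, d\mu(x),
\end{equation*}
so that $\Phi$ is literally a $\mu$-average of the conjugations $z\mapsto d(x)^*zd(x)$, each of which lies in ${\rm Aut}_{\cl D'}(\cl B(H))$.

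It remains to pass from this continuous average to the closed convex hull. The idea is standard: the map $x\mapsto (z\mapsto d(x)^*zd(x))$ is a bounded weak*-measurable (even pointwise-weak*-continuous on a suitable partition) map from $X$ into the bounded set ${\rm Aut}_{\cl D'}(\cl B(H))\subseteq \cl B(\cl B(H))$, and a Bochner/Pettis-type barycentre of a probability measure supported on a bounded set lies in the closed convex hull of that set (in the relevant locally convex — here point-weak* — topology, using that $H$ is separable so the topology is metrisable on bounded sets). Concretely I would approximate: partition $X$ into finitely many measurable pieces $X_1,\dots,X_m$ on each of which, for a finite fixed family of vectors, $\langle d(x)^*zd(x)\xi_r,\eta_r\rangle$ is nearly constant, pick a point $x_k\in X_k$, and note that $\sum_k \mu(X_k)\,(z\mapsto d(x_k)^*zd(x_k))$ is a convex combination of elements of ${\rm Aut}_{\cl D'}(\cl B(H))$ that approximates $\Phi$ on the chosen finite data; refining over all finite data and all $\epsilon$ gives $\Phi\in\overline{{\rm conv}}({\rm Aut}_{\cl D'}(\cl B(H)))$. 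The main obstacle — and the point requiring the most care — is producing the loc-exact factorisation $D\in\cl D\bar\otimes L^\infty(X,\mu)$ from a merely loc factorisation (arranging the unitary $D$ to have legs in $\cl D$ rather than in $\cl B(K)$, and identifying $\cl N$ with a genuine $L^\infty(X,\mu)$ so that $d$ becomes unitary-valued $\mu$-a.e.); once that is in hand, Lemma \ref{l_informu} and the barycentre argument are routine.
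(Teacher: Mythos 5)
Your implications (ii)$\Rightarrow$(i) and (iii)$\Rightarrow$(ii) are correct and coincide with the paper's argument, and your barycentre step (averaging the conjugations $z\mapsto d(x)^*zd(x)$ against $\mu$ and approximating by convex combinations supported on finitely many points) is essentially the paper's route for (ii)$\Rightarrow$(iii), which instead reduces to $D\in\cl D\otimes C(X)$ via Kaplansky density and then uses weak* density of convex combinations of Dirac measures; either version works once the exact factorisation is available.

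The genuine gap is the step you yourself flag as ``the main obstacle'': producing a unitary $\tilde D\in\cl D\bar\otimes L^\infty(X,\mu)$ with $\Phi=\Phi_{\tilde D}$ from the data that Theorem \ref{tar1} actually provides. Theorem \ref{tar1} only gives a unitary $D\in\cl D\bar\otimes\cl B(K)$ that \emph{returns to} the abelian $\cl N\cong L^\infty(X,\mu)$; the slices $d_{i,j}=L_{\epsilon_{j,i}}(D)$ live in $\cl B(K)$, and the returning condition only places the products $d_{i,j}^*d_{k,l}$ in $\cl N$ --- it does not place $D$ itself in $\cl D\bar\otimes\cl N$. Your assertion that ``the loc case of Theorem \ref{tar1}, together with the returning condition'' yields the identification of $D$ with a unitary-valued function $d:X\to\cl D$ is therefore not justified, and no argument is supplied. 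This is precisely where the paper does its real work: it passes to the automorphism $U={\rm Ad}(D)$ of $\cl B(H\otimes K)$, notes that $q=U(\epsilon_{i_0,i_0}\otimes 1)$ lies in $\cl B(H)\bar\otimes\cl N\cong L^\infty_\sigma(X,\cl B(H))$ and is almost everywhere a nonzero projection with $\int_X\tr(q(x))\,d\mu(x)=1$, concludes that $q(x)$ is a.e.\ a rank-one projection $\eta(x)\eta(x)^*$ with a measurable choice of unit vector $\eta(x)$, and then \emph{defines} $d(x)\xi=\sum_k\langle U(\epsilon_{i_0,k}\otimes 1)(x)\xi,\eta(x)\rangle e_k$, verifying that each $d(x)$ is unitary and satisfies $d(x)^*\epsilon_{i,j}d(x)=U(\epsilon_{i,j}\otimes 1)(x)$; only then does the modularity argument from Theorem \ref{tar1} force the resulting $\tilde D\in\cl B(H)\bar\otimes L^\infty(X,\mu)$ to lie in $\cl D\bar\otimes L^\infty(X,\mu)$. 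Without this construction (or a substitute for it), the implication (i)$\Rightarrow$(iii) --- and hence the theorem --- is not proved.
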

\begin{proof} 
(i)$\Rightarrow$(ii) 
By definition, there exists an abelian von Neumann algebra 
$\cl N\subseteq \cl B(K)$, equipped with a faithful (tracial) state $\tau_{\cl N}$ and a unitary $D\in \cl B(H)\bar\otimes \cl B(K) $ which returns to $\cl N$, such that $\Phi(z)=(\id\otimes\tau_{\cl N})(D^*(z\otimes 1_{\cl N})D)$. Let $U$ be an automorphism of $\cl B(H\otimes K)$ given by $U(x)=D^*xD$, $x\in \cl B(H\otimes K)$. We have $U(z\otimes 1_{\cl N})\in \cl B(H)\bar\otimes\cl N$ for any $z\in\cl B(H)$.
Without loss of generality, assume that 
$\cl N=L^\infty(X,\mu)$, acting by multiplication on the Hilbert space 
$K=L^2(X,\mu)$, where $(X,\mu)$ is a standard
probability measure space 
and $\tau_{\cl N}$ is given by 
integration against $\mu$. 
We will canonically identify $\cl B(H)\bar\otimes \cl N$
with the space $L_\sigma^{\infty}(X,\cl B(H),\mu)$ of all bounded weakly measurable 
$\cl B(H)$-valued functions.

Let $\{\epsilon_{i,j}\}_{i,j\in\mathbb N}$ be a  
matrix unit system in $\cl B(H)$, arising from an 
orthonormal basis, and fix $i_0\in\mathbb N$. 
Then $q := U(\epsilon_{i_0,i_0}\otimes 1)$ is a projection and hence there exists 
a measurable set $X_0\subseteq X$, such that 
$\mu(X\setminus X_0)=0$, and $q(x)^2=q(x) = q(x)^*$ 
whenever $x\in X_0$. Since $q(x)$ is a 
projection for $x\in X_0$, we have that 
$\tr(q(x))\in \mathbb N\cup\{0,\infty\}$. Moreover, $q(x)\ne 0$ almost everywhere, as otherwise there exists a set of non-zero measure $X_1$ such that $U(\epsilon_{i,j}\otimes 1)(x) = 0$, $x\in X_1$, for all $i,j\in\mathbb N$, contradicting that $U(1_{H\otimes K})=1_{H\otimes K}$. 
We can thus assume that $\tr(q(x))\geq 1$ for all $x\in X_0$. 
As $\int_X \tr(q(x)) d\mu(x) = \tau_{\cl N}(1) = 1$ and $\tau_{\cl N}$ is faithful, we obtain that $q(x)$ is a rank one projection for 
almost all $x\in X_0$. 
    Removing a set of measure zero we may 
    further assume that $U(\epsilon_{i,j}\otimes 1)(x)$ is a system of matrix units for every $x\in X_0$. 
    
    Let $\{e_k\}_{k=1}^\infty$ be an orthonormal basis in $H$
    that gives rise to the matrix unit system 
    $\{\epsilon_{i,j}\}$, and let $X_k=\{x\in X_0: q(x)e_k\ne 0\}$. Set $\eta(x)=q(x)e_k/\|q(x)e_k\|$, $x\in X_k\setminus (\cup_{i=1}^{k-1}X_i)$. We have that $X\setminus(\cup_{k=1}^\infty X_k)$ has measure zero, as otherwise $q(x)e_k=0$ for all $k\in\mathbb N$ and all $x\in X_0\setminus(\cup_{k=1}^\infty X_k)$, and as the latter set is non-empty 
    we reach a contradiction with the fact that $q(x)\ne 0$ on $X_0$.  
    Clearly, the function $x\mapsto\eta(x)$ is measurable and  $q(x)=\eta(x)\eta(x)^*$, for every $x\in \cup_{k=1}^\infty X_k$.  
    
For $x\in X_0$, let $d(x) : H\to H$ be the operator, given by 
$$d(x)\xi
=
\sum_{k=1}^\infty 
\left\langle U(\epsilon_{i_0,k}\otimes 1)(x)\xi,\eta(x)\right\rangle e_k, \ \ \ \xi\in H.$$
For $\xi,\zeta\in H$, we have
    \begin{eqnarray*}
\left\langle d(x)^*\epsilon_{i,j}d(x)\xi,\zeta \right\rangle
& = & \left\langle \epsilon_{i,j}d(x)\xi,\epsilon_{i,i}d(x)\zeta \right\rangle\\
& = & 
\left\langle U(\epsilon_{i_0,j}\otimes 1)(x)\xi,\eta(x) \right\rangle
\overline{\left\langle U(\epsilon_{i_0,i}\otimes 1)(x)\zeta,\eta(x)
\right\rangle}\\
& = & 
\left\langle U(\epsilon_{i_0,j}\otimes 1)(x)\xi, U(\epsilon_{i_0,i}\otimes 1)(x)\zeta\right\rangle
= 
\left\langle U(\epsilon_{i,j}\otimes 1)(x)\xi,\zeta \right\rangle\hspace{-0.12cm},
    \end{eqnarray*}
showing that $d(x)^*\epsilon_{i,j}d(x)
= U(\epsilon_{i,j}\otimes 1)(x)$, $x\in X_0$ and $d(x)^*d(x)=1$. 
On the other hand, direct verification shows that, if $\zeta\in H$ then 
$$d(x)^* \zeta = \sum_{k=1}^{\infty} 
\left\langle \zeta, e_k \right\rangle  U(\epsilon_{i_0,k}\otimes 1)(x)^* \eta(x). 
$$
Since $U$ is *-preserving, by deleting a null set if 
necessary, we may assume that 
$$U(\epsilon_{i_0,k}\otimes 1)(x)^* = U(\epsilon_{k,i_0}\otimes 1)(x), 
\ \ \ x\in \cup_{l=1}^{\infty} X_l.$$
Thus, whenever $x\in \cup_{l=1}^{\infty} X_l$ and $\zeta\in H$, we have
\begin{eqnarray*}
d(x)d(x)^* \zeta 
& = & 
\sum_{k=1}^{\infty} 
\left\langle \zeta, e_k \right\rangle  
d(x)(U(\epsilon_{i_0,k}\otimes 1)(x)^* \eta(x))\\
& = & 
\sum_{k=1}^{\infty} 
\left\langle \zeta, e_k \right\rangle  
\sum_{m=1}^{\infty} 
\left\langle
U(\epsilon_{i_0,m}\otimes 1)(x)U(\epsilon_{i_0,k}\otimes 1)(x)^* \eta(x)),
\eta(x)\right\rangle e_m\\
& = & 
\sum_{k=1}^{\infty} 
\left\langle \zeta, e_k \right\rangle  
\sum_{m=1}^{\infty} 
\left\langle
U(\epsilon_{i_0,m}\otimes 1)(x)U(\epsilon_{k,i_0}\otimes 1)(x) \eta(x)),
\eta(x)\right\rangle e_m\\
& = & 
\sum_{k=1}^{\infty} 
\left\langle \zeta, e_k \right\rangle  
\sum_{m=1}^{\infty} 
\left\langle
U(\epsilon_{i_0,i_0}\otimes 1)(x) \eta(x)),
\eta(x)\right\rangle e_k = \zeta.
\end{eqnarray*}
Hence $d(x)d(x)^*=1$, showing that $d(x)$ is a unitary.  
It follows that,
    for $z=\epsilon_{i,j}\in\cl B(H)$ and $\omega\in\cl S_1(H)$, we have
\begin{eqnarray*}
\langle\Phi(z), \omega\rangle
& = & 
\langle U(z\otimes 1),\omega\otimes 1\rangle_{\cl B(H)\otimes L^\infty(X),\cl S_1(H)\otimes L^1(X) }\\
& = & \int_X\langle U(z\otimes 1)(x),\omega\rangle_{\cl B(H),\cl S_1(H)}d\mu(x)\\
& = & 
\int_X\langle d(x)^*zd(x),\omega\rangle_{\cl B(H), \cl S_1(H)}d\mu(x),
    \end{eqnarray*}
    showing that 
\begin{equation}\label{eq_Phiz=}    
\Phi(z)=\int_Xd(x)^*zd(x)d\mu(x)
=
(\id\otimes \tau_{\cl N})(\tilde{D}^*(z\otimes 1)\tilde{D})
\end{equation}
for the unitary 
    $\tilde{D} \in \cl B(H)\bar\otimes L^\infty(X,\mu)$ 
    corresponding to the function $x\mapsto d(x)$. 
    As $\Phi$ is normal, we have the equality for all $z\in\cl B(H)$,
    that is, identity (\ref{eq_Phimod}) 
    in the proof of Theorem \ref{tar1} is satisfied 
    with $\tilde{D}$ in the place of $D$. 
    As in the proof of the implication (i)$\Rightarrow$(ii) 
    of Theorem \ref{tar1}, using the modularity of $\Phi$, 
    we can now show that 
    $\tilde{D}\in \cl D\bar\otimes \cl B(L^2(X,\mu))$; since 
    $\tilde{D}\in \cl B(H)\bar\otimes L^\infty(X,\mu)$, 
    we conclude that 
    $\tilde{D}\in \cl D\bar\otimes L^\infty(X,\mu)$.

(ii)$\Rightarrow$(i) is trivial.

(ii)$\Rightarrow$(iii) 
Let $\Phi\in \frak{D}_{{\rm loc},\cl D}^{\rm ex}(H)$ and 
let $D\in \cl D\bar\otimes L^{\infty}(X,\mu)$ be a unitary 
operator, such that $\Phi = \Phi_D$. 
We may assume that $X$ is endowed with a second countable 
compact Hausdorff topology and let $C(X)$ be the associated 
space of complex-valued continuous functions (\cite[Theorem 4.4.4]{murphy}). 
By Kaplansky's Density Theorem 
\cite[Corollary 5.3.7]{kadison-ringrose}
and the separability assumptions, there exists a sequence 
$(D_n)_{n\in \bb{N}}$ of unitary operators in the C*-algebraic tensor product 
$\cl D\otimes C(X)$, 
such that $D_n\to_{n\to\infty} D$ in the strong operator topology. 
By (\ref{eq_Phiz=}) and Lemma \ref{l_informu}, 
$\Phi_{D_n}\to_{n\to \infty} \Phi_D$ in the point-weak* topology.

By the previous paragraph, we may 
assume that $D\in \cl D\otimes C(X)$. 
In this case, the associated function $d : X\to \cl D$
takes values in $\cl D$ and is continuous. 
Thus, if $\xi,\eta\in H$ then the 
function $x\mapsto \langle d(x)^*zd(x)\xi,\eta\rangle$ is continuous. 
Using the fact that the convex combinations of Dirac measures are 
weak* dense in the set of all probability measures, 
we conclude that 
$\Phi$ is in the point-weak* closed hull of 
${\rm conv}({\rm Aut}_{\cl D'}(\cl B(H))$.


(iii)$\Rightarrow$(ii) 
It is clear that the maps $\Phi$ of the form $\Phi(z) = d^* z d$, where 
$d\in \cl D$ is a unitary, 
admit an exact factorisation via the trivial ancilla.
By Proposition \ref{p_convex}, 
${\rm conv}({\rm Aut}_{\cl D'}(\cl B(H))\subseteq \frak{D}_{{\rm loc},\cl D}^{\rm ex}(H)$. 
The claim now follows from Theorem \ref{p_closures} (i). 
\end{proof}

From the previous theorem, we have 
$\frak{D}_{\rm loc,\cl D} = \overline{\text{conv}}(\text{Aut}_{\cl D'}(\cl B(H))$. 
The elements of $\text{Aut}_{\cl D'}(\cl B(H))$ belong to 
$\frak{D}_{{\rm q},\cl D}(H)$ (with one-dimensional ancilla); thus,
by Proposition \ref{p_convex}, 
$$\frak{D}_{{\rm loc},\cl D}(H)\subseteq \overline{\frak{D}_{{\rm q},\cl D}(H)} \text{ and } \frak{D}_{{\rm q},\cl D}(H)\subseteq \frak{D}_{{\rm qc}, \cl D}(H).$$
If the Hilbert space $H$ is finite dimensional, 
then the convex hull of $\text{Aut}_{\cl D'}(\cl B(H)$ is closed and hence 
\begin{equation}\label{eq_findimi}
\frak{D}_{{\rm loc},\cl D}(H) \subseteq \frak{D}_{{\rm q},\cl D}(H).
\end{equation}
We do not know if (\ref{eq_findimi}) holds in the case where 
$H$ is infinite dimensional.

\begin{lemma}\label{tensor_dil}
    Let $H_i$ be a separable Hilbert space, 
    $\cl D_i\subseteq\cl B(H_i)$ be a von Neumann algebras, $\Phi_i\in\frak D_{{\rm t},\cl D_i}(H_i)$, $i=1,2$, and $t\in\{\rm loc,q,qa, qc\}$. Then $\Phi_1\otimes\Phi_2\in\frak D_{\rm t,\cl D_1\bar\otimes\cl D_2}(H_1\otimes H_2)$ . In particular, $\Phi_1\otimes\id_{\cl B(H_2)} \in\frak D_{\rm t,\cl D_1\bar\otimes \cl D_2}(H_1\otimes H_2)$. The same statements hold true for the classes 
    of maps that admit a ${\rm t}$-exact factorisation.
\end{lemma}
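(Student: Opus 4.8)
The plan is to build, out of the two given data, a single dilation (for the statement about $\frak D_{{\rm t},\cl D}$) and a single block unitary (for the exact statement) by tensoring, and then to observe that the tensor product of the two ancillas is again of type ${\rm t}$.

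For the first assertion I would use the reformulation of dilatability in Remark~\ref{r_ancilla}. Write $\Phi_i=(J_i)_1^*\circ U_i\circ J_i$, where $J_i:\cl B(H_i)\to\cl B(H_i)\bar\otimes\cl N_i$ is the ampliation $x\mapsto x\otimes 1$, $(\cl N_i,\tau_{\cl N_i})$ is a finite ancilla for $\Phi_i$ of type~${\rm t}$, and $U_i$ is a normal trace-preserving $*$-automorphism of $\cl B(H_i)\bar\otimes\cl N_i$. Set $\cl N:=\cl N_1\bar\otimes\cl N_2$ with the tracial state $\tau_{\cl N}:=\tau_{\cl N_1}\otimes\tau_{\cl N_2}$, let $\Theta:\cl B(H_1\otimes H_2)\bar\otimes\cl N\to(\cl B(H_1)\bar\otimes\cl N_1)\bar\otimes(\cl B(H_2)\bar\otimes\cl N_2)$ be the canonical leg-permuting $*$-isomorphism, and put $U:=\Theta^{-1}\circ(U_1\bar\otimes U_2)\circ\Theta$, which is a normal trace-preserving $*$-automorphism. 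If $J:\cl B(H_1\otimes H_2)\to\cl B(H_1\otimes H_2)\bar\otimes\cl N$ is the ampliation, a check on elementary tensors (extended by weak*-continuity) shows $\Theta\circ J=J_1\bar\otimes J_2$ and $J_1^*\circ\Theta^{-1}=(J_1)_1^*\bar\otimes(J_2)_1^*$, whence
\[
\big(J_1^*\circ U\circ J\big)(x_1\otimes x_2)
=\big((J_1)_1^*\circ U_1\circ J_1\big)(x_1)\otimes\big((J_2)_1^*\circ U_2\circ J_2\big)(x_2)
=\Phi_1(x_1)\otimes\Phi_2(x_2),
\]
so $J_1^*\circ U\circ J=\Phi_1\otimes\Phi_2$ and $\cl N_1\bar\otimes\cl N_2$ is an ancilla for $\Phi_1\otimes\Phi_2$.

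For the statement about exact factorisations I would instead use Theorem~\ref{tar1}: choose unitaries $D_i\in\cl D_i\bar\otimes\cl N_i$, with $\cl N_i\subseteq\cl B(K_i)$ a finite tracial von Neumann algebra of type~${\rm t}$, such that $D_i$ returns to $\cl N_i$, satisfies trace preservation, and $\Phi_i=\Phi_{D_i}$. Set $K=K_1\otimes K_2$, $\cl N=\cl N_1\bar\otimes\cl N_2\subseteq\cl B(K)$, let $\Sigma:(H_1\otimes K_1)\otimes(H_2\otimes K_2)\to(H_1\otimes H_2)\otimes K$ be the leg-shuffling unitary, and put $D:=\Sigma^*(D_1\otimes D_2)\Sigma$; since conjugation by $\Sigma$ merely reorders the four legs into $(H_1,H_2,K_1,K_2)$, $D$ is a unitary in $(\cl D_1\bar\otimes\cl D_2)\bar\otimes\cl N$. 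From the identity
\[
D^*\big((x_1\otimes x_2)\otimes I_K\big)D=\Sigma^*\Big(\big(D_1^*(x_1\otimes I_{K_1})D_1\big)\otimes\big(D_2^*(x_2\otimes I_{K_2})D_2\big)\Big)\Sigma
\]
and Lemma~\ref{l_returns}(iv) applied to $D_1,D_2$, the right-hand side lies in $\cl B(H_1\otimes H_2)\bar\otimes\cl N$ for elementary $x_1\otimes x_2$; as the set of $x$ with $D^*(x\otimes I_K)D\in\cl B(H_1\otimes H_2)\bar\otimes\cl N$ is a weak*-closed subspace containing the weak*-dense span of elementary tensors, $D$ returns to $\cl N$. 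Slicing the displayed identity by $\id\otimes\tau_{\cl N}$ and using $\tau_{\cl N}=\tau_{\cl N_1}\otimes\tau_{\cl N_2}$ yields $\Phi_D(x_1\otimes x_2)=\Phi_1(x_1)\otimes\Phi_2(x_2)$, hence $\Phi_D=\Phi_1\otimes\Phi_2$ by weak*-continuity; taking $e:=e_1\otimes e_2$ for trace-preserving unit vectors $e_i\in H_i$ of $D_i$, together with $\tr_{H_1\otimes H_2}=\tr_{H_1}\otimes\tr_{H_2}$, gives trace preservation for $D$. Theorem~\ref{tar1} then places $\Phi_1\otimes\Phi_2$ in $\frak D^{\rm ex}_{{\rm t},\cl D_1\bar\otimes\cl D_2}(H_1\otimes H_2)$.

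In both cases the new ancilla is $\cl N_1\bar\otimes\cl N_2$, so it remains to see that it has type~${\rm t}$. There is nothing to check for ${\rm t}={\rm qc}$; a tensor product of abelian (resp.\ finite dimensional) von Neumann algebras is abelian (resp.\ finite dimensional), which covers ${\rm t}\in\{{\rm loc},{\rm q}\}$. For ${\rm t}={\rm qa}$, fix normal trace-preserving $*$-monomorphisms $\iota_i:\cl N_i\to\cl M_i^{\frak{u}_i}$ into matricial ultraproducts; then $\iota_1\bar\otimes\iota_2:\cl N_1\bar\otimes\cl N_2\to\cl M_1^{\frak{u}_1}\bar\otimes\cl M_2^{\frak{u}_2}$ is a normal trace-preserving $*$-homomorphism, hence isometric on the associated $L^2$-spaces and in particular injective, and by Remark~\ref{r_tensultra} its target re-embeds trace-preservingly into a matricial ultraproduct. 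This gives $\Phi_1\otimes\Phi_2\in\frak D_{{\rm t},\cl D_1\bar\otimes\cl D_2}(H_1\otimes H_2)$, and in the exact case $\in\frak D^{\rm ex}_{{\rm t},\cl D_1\bar\otimes\cl D_2}(H_1\otimes H_2)$. Finally, $\id_{\cl B(H_2)}=\Phi_{I_{H_2}}$ with the one-dimensional ancilla $\bb{C}$, which is at once abelian, finite dimensional and trivially embeddable into a matricial ultraproduct, so $\id_{\cl B(H_2)}\in\frak D^{\rm ex}_{{\rm t},\cl D_2}(H_2)$ for every~${\rm t}$ and every $\cl D_2\subseteq\cl B(H_2)$; applying the above with $\Phi_2=\id_{\cl B(H_2)}$ yields the "in particular" statement. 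The one step that is not pure bookkeeping with $\Theta$, $\Sigma$ and weak*-continuity is the ${\rm qa}$ case, which rests on the stability of matricial ultraproducts under tensoring established in Remark~\ref{r_tensultra}.
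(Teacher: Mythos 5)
Your proof is correct and follows essentially the same route as the paper's: tensor the two factorisations (after a leg-shuffling unitary) to get the ancilla $\cl N_1\bar\otimes\cl N_2$, observe that abelian and finite-dimensional types are preserved under tensoring, and invoke Remark~\ref{r_tensultra} for the ${\rm qa}$ case. Your use of the dilation form from Remark~\ref{r_ancilla} for the non-exact case is a minor, harmless variation on the paper's "similar" argument via Theorem~\ref{tar1}(ii).
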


\begin{proof}
Assume first that $\rm t=qc$ and consider maps which admit exact $\rm qc$-factorisation. For $i=1,2$, let $(\cl N_i, \tau_i)$ be a finite von-Neumann algebra and $D_i\in\cl D_i\bar\otimes\cl N_i$ 
be a unitary such that
    $\Phi_i(x)=(\id\otimes\tau_i)(D_i^*(x\otimes 1_{\cl N_i})D_i)$. After an appropriate flip we think of  $D_1\otimes D_2$ as a unitary in $(\cl D_1\bar\otimes\cl D_2)\bar\otimes(\cl N_1\bar\otimes\cl N_2)$ and, for $x_i\in\cl B(H_i)$ have
\begin{eqnarray*}
& & 
\hspace{-1cm} (\Phi_1\otimes\Phi_2)(x_1\otimes x_2)\\
& = & 
((\id\otimes\id)\otimes(\tau_{\cl N_1}\otimes\tau_{\cl N_2})((D_1^*\otimes D_2^*)(x_1\otimes x_2\otimes 1_{\cl N_1}\otimes 1_{\cl N_2})(D_1\otimes D_2)),
\end{eqnarray*}
that is, $\Phi_1\otimes\Phi_2$ is  exact factorisable via the ancilla $\cl N_1\bar\otimes\cl N_2$. 
    Clearly, if   $\cl N_i$ are abelian (finite-dimensional) so is $\cl N_1\bar\otimes\cl N_2$. That $\cl N_1\bar\otimes\cl N_2$ is embedable into a matricial ultraproduct of so are $\cl N_1$ and $\cl N_2$ follows from Remark \ref{r_tensultra}. 
    
    The second statement follows from the fact that $\id_{\cl B(H_2)}$ is factorisable via the ancilla $\cl N=\mathbb C$. The proof for $\rm t$-factorisable maps is similar. 
\end{proof}

\begin{theorem}\label{th_clova}
\begin{itemize}
\item[(i)]
If $H$ is a separable Hilbert space and 
$\cl D\subseteq \cl B(H)$ be a maximal abelian von Neumann algebra
of dimension exceeding 10,  
then 
the inclusions 
$$\frak{D}_{{\rm loc},\cl D}(H)\cap \frak{D}_{{\rm q},\cl D}(H)\subseteq 
\frak{D}_{{\rm q},\cl D}(H)
\ \mbox{ and } \ 
\frak{D}_{{\rm q},\cl D}(H)
\subseteq \overline{\frak{D}_{{\rm q},\cl D}(H)}$$
are proper.

\item[(ii)] 
The equality 
$\frak{D}_{{\rm qa}, \cl D}(H) = \frak{D}_{{\rm qc}, \cl D}(H)$ holds 
for purely continuous maximal abelian von Neumann algebra $\cl D$ if and only if the Connes Embedding Problem
has an affirmative answer. 
\end{itemize}
\end{theorem}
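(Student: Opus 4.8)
\emph{Proof proposal.} The plan is to reduce both parts to the theory of factorisable Schur multipliers over a purely continuous maximal abelian von Neumann algebra. Since any two such algebras on a separable Hilbert space are unitarily equivalent, and unitary equivalence leaves all the classes $\frak{D}_{{\rm t},\cl D}(H)$ unchanged, we may take $\cl D = L^\infty(X,\mu)$ acting by multiplication on $H = L^2(X,\mu)$ with $(X,\mu)$ atomless and standard; the weak$^*$ continuous unital completely positive $\cl D'$-modular maps are then exactly the positive Schur multipliers $S_\phi$. By Corollary \ref{c_Schuhi}, $\frak{D}_{{\rm t},\cl D}(H) = \frak{D}_{{\rm t},\cl D}^{\rm ex}(H)$ for every ${\rm t}$, and $\frak{D}_{{\rm qa},\cl D}(H)$ is the point-weak$^*$ closure of $\frak{D}_{{\rm q},\cl D}(H)$; by Corollary \ref{Schur_mult}, $S_\phi\in\frak{D}_{{\rm qc},\cl D}(H)$ precisely when $\phi(x,y)=\tau_{\cl N}(d(x)^*d(y))$ almost everywhere for a finite tracial von Neumann algebra $(\cl N,\tau_{\cl N})$ with separable predual and a $w^*$-measurable unitary-valued $d:X\to\cl N$, and it lies in $\frak{D}_{{\rm qa},\cl D}(H)$ if and only if such a representation exists with $\cl N$ embeddable, trace-preservingly, into a matricial ultraproduct. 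Granting these reductions, the implication ``affirmative CEP $\Rightarrow\frak{D}_{{\rm qa},\cl D}(H)=\frak{D}_{{\rm qc},\cl D}(H)$'' is immediate, the reverse inclusion being trivial: if every finite tracial von Neumann algebra with separable predual embeds into a matricial ultraproduct (one standard formulation of an affirmative answer), then so does every ancilla $\cl N$ occurring above, and composing $d$ with the embedding $\cl N\hookrightarrow\cl M^{\frak u}$ exhibits $S_\phi$ as a member of $\frak{D}_{{\rm qa},\cl D}(H)$.

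For the converse I would argue by contraposition, so the task is: assuming the Connes Embedding Problem fails, exhibit an $S_\phi\in\frak{D}_{{\rm qc},\cl D}(H)\setminus\frak{D}_{{\rm qa},\cl D}(H)$. By \cite{jnvwy}, together with the reformulation of the Connes Embedding Problem through the equality of the quantum-commuting and the approximately quantum correlation classes \cite{oz,jnpp,fritz}, there is a finite tracial von Neumann algebra $(\cl N,\tau_{\cl N})$ with separable predual that embeds trace-preservingly into no matricial ultraproduct, presented with a countable generating family of unitaries whose joint $*$-moments admit no matricial microstates. The idea is to encode $(\cl N,\tau_{\cl N})$ into a factorisable Schur multiplier, following the construction of Haagerup and Musat \cite{hm1,hm2} for factorisable channels on $M_n$ but transcribed to the present modular continuous setting via Theorem \ref{tar1}: fix a countable partition $X=\bigsqcup_k X_k$ into measurable sets of positive measure and let $d$ take on $X_k$ the value of the $k$-th prescribed generator, so that the unitary $D\in\cl D\bar\otimes\cl N$ has the feature that the von Neumann algebra generated by its slices $L_{\epsilon_{j,i}}(D)$, together with $\tau_{\cl N}$, is trace-preservingly isomorphic to $(\cl N,\tau_{\cl N})$. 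With $\phi(x,y)=\tau_{\cl N}(d(x)^*d(y))$ and $\Phi=S_\phi=\Phi_D\in\frak{D}_{{\rm qc},\cl D}(H)$, the remaining step --- and the crux of the whole theorem --- is to prove that any alternative factorisation $\Phi=\Phi_{D'}$ with $D'\in\cl D\bar\otimes\cl N'$, $\cl N'$ embeddable in a matricial ultraproduct $\cl M^{\frak u}$, forces $(\cl N,\tau_{\cl N})$ itself to embed trace-preservingly into $\cl M^{\frak u}$; this contradicts the choice of $\cl N$ and yields $\Phi\notin\frak{D}_{{\rm qa},\cl D}(H)$, hence $\frak{D}_{{\rm qa},\cl D}(H)\subsetneq\frak{D}_{{\rm qc},\cl D}(H)$.

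I expect this last step to be the main obstacle: it is a rigidity statement asserting that the minimal ancilla of $\Phi_D$ --- identified with $\cl N$ by the construction --- embeds trace-preservingly into the ancilla of any other factorisation of $\Phi_D$. I would attack it by combining the analysis of the ancilla data in terms of the operator symbol and its iterates $\Phi^n$, as in the proof of Theorem \ref{tar1} and in \cite{hm2}, with the ultraproduct machinery of Lemma \ref{l_tenuni} and Remark \ref{r_tensultra}, which shows that a factorisation of $\Phi$ through $\cl M^{\frak u}$ furnishes, for every finite subset of the chosen generators of $\cl N$ and every tolerance, matricial approximations of their joint $*$-moments, i.e. matricial microstates, contradicting the choice of $\cl N$. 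Here the hypothesis that $\cl D$ be purely continuous is indispensable: partitioning $X$ into infinitely many cells of positive measure is what lets the entire generating family --- and thus all of $(\cl N,\tau_{\cl N})$ --- be recorded by a single map, and the atomic (finite-dimensional) analogue genuinely fails, as part (i) witnesses. Finally, part (i): the first inclusion is proper because, writing $\cl D\cong\ell^\infty_{11}\bar\otimes\cl D_0$ up to unitary equivalence (possible as $\dim\cl D>10$), Lemma \ref{tensor_dil} lets one tensor the Haagerup--Musat factorisable Schur channel on $M_{11}$ of \cite{hm1} --- which admits a finite-dimensional but no abelian ancilla --- with the identity on the $\cl D_0$-leg to produce, via Theorem \ref{th_36} and a compression argument, a map in $\frak{D}_{{\rm q},\cl D}(H)\setminus\frak{D}_{{\rm loc},\cl D}(H)$; the second inclusion is proper because a factorisable Schur multiplier with a finite-dimensional ancilla necessarily has a symbol of finite rank, whereas $\phi(x,y)=\prod_{k\geq 1}\cos(2^{-k}(x-y))=\tfrac{\sin(x-y)}{x-y}$ --- a factorisable Schur multiplier realised over the abelian ancilla $L^\infty([-1,1])$ --- has infinite rank, so it lies in $\frak{D}_{{\rm loc},\cl D}(H)\subseteq\overline{\frak{D}_{{\rm q},\cl D}(H)}=\frak{D}_{{\rm qa},\cl D}(H)$ but not in $\frak{D}_{{\rm q},\cl D}(H)$.
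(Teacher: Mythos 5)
Your proposal has a genuine gap at what you yourself identify as ``the crux of the whole theorem'', and the attack you sketch for it cannot work as described. The map $\Phi_D=S_\phi$ with $\phi(x,y)=\tau_{\cl N}(d(x)^*d(y))$ remembers only the degree-two mixed moments $\tau_{\cl N}(d_i^*d_j)$ of your chosen generators, not their joint $*$-moments. An alternative factorisation $\Phi_{D'}$ through an embeddable ancilla therefore only supplies matricial approximants of the matrix $[\tau_{\cl N}(d_i^*d_j)]$; it yields no microstates for higher moments and so cannot force $(\cl N,\tau_{\cl N})$ itself to embed into $\cl M^{\frak{u}}$. Minimal ancillas are not rigid in the sense you need, and the question of whether every matrix $[\tau(u_i^*u_j)]$ realised by unitaries in a finite tracial von Neumann algebra is also realised (approximately) by matricial unitaries is precisely the nontrivial content of the Haagerup--Musat reformulation of the Connes Embedding Problem --- an external input to be cited, not something recoverable from the factorisation data of a single Schur multiplier. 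The paper's proof of (ii) takes exactly that route: it invokes \cite[Theorem 3.7]{hm2} to produce, from a negative answer to CEP, a factorisable Schur multiplier on some $M_k$ lying outside $\frak{D}_{{\rm qa},\ell^\infty([k])}(\bb{C}^k)$, tensors it with the identity via Lemma \ref{tensor_dil}, and transfers non-membership in $\frak{D}_{\rm qa}$ back down to $M_k$ by evaluating the would-be ancilla unitary $C\in L^\infty_\sigma((0,1),\ell^\infty([k])\otimes\cl C)$ at a suitable point $s_0$.

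Part (i) also has gaps. The decomposition $\cl D\cong\ell^\infty_{11}\bar\otimes\cl D_0$ is unavailable for, e.g., $\cl D=\ell^\infty_{13}$ or for a mixed-type $\cl D$ with a small atomic summand; the paper instead pads the $6\times 6$ Haagerup--Musat example with identities in the discrete case and treats mixed type by a separate corner (compression) argument. More seriously, your finite-rank argument for the properness of $\frak{D}_{{\rm q},\cl D}(H)\subseteq\overline{\frak{D}_{{\rm q},\cl D}(H)}$ --- which is a nice and essentially correct observation when $H$ is infinite dimensional, since $\sin(x-y)/(x-y)$ is a locally factorisable symbol of infinite rank while a finite-dimensional ancilla forces rank at most $\dim\cl N$ --- says nothing when $H$ is finite dimensional, where every symbol automatically has finite rank. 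That case is covered by the theorem (any $\dim\cl D\geq 11$) and is exactly where the Musat--R\o rdam non-closure theorem \cite[Theorem 4.1]{mr}, resting on the non-closure of quantum correlation sets, is needed; the paper cites it, and your argument does not replace it.
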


\begin{proof} (i)
If $H$ is finite dimensional then the statement follows from \cite[Example 3.3]{hm1} and \cite[Theorem 4.1]{mr}.
We first assume that $\cl D$ is discrete and infinite
dimensional; thus, 
without loss of generality, $H = \ell^2$ and 
$\cl D = \ell^{\infty}$, 
acting by (pointwise) multiplication. 
By \cite[Example 3.3]{hm1}, there exists a 
matrix $B=(b(s,t))_{s,t=1}^6\in M_6(\mathbb C)$ such that the map $\Phi : M_6(\mathbb C)\to M_6(\mathbb C)$, $z\mapsto B\ast z$,
of Schur multiplication by the matrix $B$ 
is factorisable via a finite-dimensional ancilla but is not in the convex hull of $\text{Aut}(\cl B(\mathbb C^6))$. 
Therefore, there exist unitaries $D(s)\in M_n(\mathbb C)$,  $s=1,\ldots, 6$, such that $b(s,t)=\tr_n(D(s)^*D(t)) $. Extend $D$ to $\mathbb N$ by setting $D(s)=1$ for $s>6$ and let 
$\tilde b(s,t)=\tr_n(D(s)^*D(t))$, $s,t\in\mathbb N$.
Then the corresponding map 
$\Psi : \cl B(\ell^2)\to\cl B(\ell^2)$ is a factorisable map via a finite-dimensional ancilla. Assuming that $\Psi \in {\frak D}_{{\rm loc},\ell^\infty}(\ell^2)$, we obtain that $\tilde b(s,t)=\tau(U(s)^*U(t))$ for a unitary-valued map $U:\mathbb N\to \cl N$, where $\cl N$ is an abelian von-Neumann algebra and $\tau$ is its state. But this implies that $T_B$ is factorisable via an abelian ancilla $\cl N$, contradicting the choice of $B$. 
The case where $\cl D$ is discrete and finite dimensional is 
treated similarly. 

The proof of the second statement in (i) for the discrete case is similar and uses  \cite[Theorem 4.1]{mr}.

Assume that $\cl D$ is purely continuous, 
that is, $H = L^2(0,1)$ and 
$\cl D = L^{\infty}(0,1)$ (here $(0,1)$ is equipped with Lebesgue
measure). 
There is a unitary isomorphism $L^2(0,1)\simeq \mathbb C^6\otimes L^2(0,1)$ such that $\cl D$ is unitarily equivalent to $\ell^\infty([6])\otimes\cl D$. Indeed, 
let $p_i$ be the multiplication by $\chi_{X_i}$ where $X_i=((i-1)/6,i/6)$, $i=1,\ldots,6$. Then $U:\xi\in L^2(0,1)\mapsto (p_1\xi,\ldots,p_6\xi)\in \oplus_{i=1}^6 L^2(X_i)$ is unitary and $UdU^*(\xi_1,\ldots,\xi_6)=(d_1\xi_1,\ldots d_6\xi_6)$, where $d_i=d|_{X_i}$ for $d\in\cl D$. Furthermore, $L^2(X_i)$ is 
isomorphic to $L^2(0,1)$ via 
the conjugation by the unitary $U_i$, 
given by  $(U_i\xi)(x)=\xi((x+(i-1))/6)/6$, $\xi\in L^2(X_i)$ which also gives unitary equivalence of the multiplication masas on $L^2(X_i)$ and $L^2(0,1)$. 
We shall only show the first statement in (i), the other is proved in a similar way using \cite[Theorem 4.1]{mr} instead. Let $\Phi : M_6\to M_6$ be the map from the previous paragraph
and $\Psi : \cl B(\mathbb C^6\otimes H)\to 
\cl B(\mathbb C^6\otimes H)$ be the map, given by 
$\Psi(x) = (\Phi\otimes \id\mbox{}_{\cl B(H)})(x)$. 
By Lemma \ref{tensor_dil}, $\Psi\in \frak{D}_{{\rm q}, \ell^\infty([6])\otimes\cl D}(\mathbb C^6\otimes H)\equiv \frak{D}_{{\rm q}, \cl D}(H)$. 

Assume, by way of contradiction, 
that $\Psi\in \frak{D}_{{\rm loc}, \cl D}(H)$.
Let $\cl C \subseteq \cl B(K)$ 
be an abelian von Neumann algebra equipped with a  state 
$\tau_{\cl C}$, and $C\in \cl D\bar{\otimes} \cl C=L^\infty_\sigma((0,1),\cl C)$
be a unitary, such that $\Psi = \Phi_C$. 
Up to conjugation by a unitary operator, we have that that
$$\cl D\bar{\otimes} \cl C = \left(\ell^{\infty}([6])
\bar{\otimes}\cl D\right)\bar{\otimes} \cl C$$ and we can think of $C$ as element in $L^\infty_\sigma((0,1),\ell^{\infty}([6])\otimes\cl C)$.
Since $\Psi = \Phi_C$, for $z\in M_6(\mathbb C)$ we have  $$\Phi(z)\otimes I_{\cl B(H)}=\Psi(z\otimes I_{\cl B(H)})=(\id\hspace{-0.05cm}\mbox{}_{M_6(\mathbb C)}\otimes\id\hspace{-0.05cm}\mbox{}_{\cl B(H)}\otimes\tau_{\cl C})(C^*(z\otimes I_{\cl B(H)}\otimes 1_{\cl C})C) ,$$ so that after identifying $z\otimes I_{\cl B(H)}$, $z\in M_6(\mathbb C)$, with the constant function $s\mapsto z(s)=z$ in $L^\infty_\sigma((0,1),M_6(\bb{C}))$ we obtain  
$$\Phi(z)=\Phi(z)(s)=(\id\hspace{-0.05cm}\mbox{}_{M_6(\mathbb C)}\otimes \tau_{\cl C})(C^*(s)(z\otimes 1_{\cl C})C(s)) \ \mbox{ for almost all } s\in(0,1).$$
Moreover, $C(s)$ is unitary for almost all $s$. Therefore there exists $s_0$ such that $C(s_0)\in \ell^\infty([6])\otimes\cl C$ is unitary and 
$\Phi(z)=(\id\otimes \tau_{\cl C})(C^*(s_0)(z\otimes 1_{\cl C})C(s_0))$ for all $z\in M_6(\mathbb C)$, giving 
$\Phi \in \frak{D}_{{\rm loc}, \ell^{\infty}([6])}(\bb{C}^6)$, 
a contradiction.

Finally, assume that $\cl D$ is of mixed type, and write 
$H = H_1\oplus H_2$ and $\cl D = \cl D_1\oplus \cl D_2$, 
where $\cl D_1\subseteq \cl B(H_1)$ is 
purely continuous (and non-trivial), while 
$\cl D_2\subseteq \cl B(H_2)$ is totally atomic. 
Since $\Phi : \cl B(H_1\oplus H_2)\to \cl B(H_1\oplus H_2)$
is a $\cl D_1\oplus\cl D_2$-bimodule map, 
there exist linear maps $\Phi_{i,j} : \cl B(H_j)\to \cl B(H_i)$, 
$i,j = 1,2$, such that 
\begin{equation}\label{eq_Phideco}
\Phi\left( \left[
\begin{matrix} 
x_{1,1} & x_{1,2}\\
x_{2,1} & x_{2,2}
\end{matrix}
\right]\right) 
= 
\left[
\begin{matrix}  
\Phi_{1,1}(x_{1,1}) & \Phi_{1,2}(x_{1,2})\\
\Phi_{2,1}(x_{2,1}) & \Phi_{2,2}(x_{2,2})
\end{matrix}
\right], \ \ \ x_{i,j}\in \cl B(H_j,H_i), \ i,j = 1,2.
\end{equation}
Note that, since $\Phi$ is unital and completely positive, 
so is $\Phi_{1,1}$. 

\medskip

\noindent {\it Claim. } 
If ${\rm t}\in\{\rm loc,q,qa, qc\}$ and 
$\Phi\in\frak D_{\rm t,\cl D}(H)$ then 
$\Phi_{1,1}\in\frak D_{\rm t,\cl D_1}(H_1)$. 

\medskip

\noindent {\it Proof of Claim. }
We demonstrate the claim for ${\rm t} = {\rm qc}$; the 
rest of the cases are obtained verbatim. 
Assume that $(\cl N,\tau)$ is a tracial von Neumann algebra
and, by virtue of Theorem \ref{th_36}, that 
$D \in (\cl D_1\oplus\cl D_2)\bar\otimes\cl N$ is a unitary 
operator, such that 
\begin{equation}\label{eq_mixed}
\Phi\hspace{-0.1cm}\left(\hspace{-0.05cm}
\left[
\begin{matrix}  
x_{1,1} & x_{1,2}\\
x_{2,1} & x_{2,2}
\end{matrix}
\right]\hspace{-0.05cm}\right) \hspace{-0.1cm} = \hspace{-0.05cm} (\id\otimes\tau)
\hspace{-0.1cm}\left(\hspace{-0.1cm}D^*\hspace{-0.1cm}
\left(\hspace{-0.05cm}\left[
\begin{matrix}  
x_{1,1} & x_{1,2}\\
x_{2,1} & x_{2,2}
\end{matrix}
\right]
\hspace{-0.1cm}\otimes \hspace{-0.1cm} 1\hspace{-0.05cm}\right)\hspace{-0.1cm}D\hspace{-0.1cm}\right), 
\ \ 
\left[
\begin{matrix}  
x_{1,1} & x_{1,2}\\
x_{2,1} & x_{2,2}
\end{matrix}
\right]
\hspace{-0.1cm}\in \cl B(H_1\hspace{-0.05cm}\oplus \hspace{-0.05cm}H_2).
\end{equation}
Using the canonial identification 
$$(\cl D_1\oplus\cl D_2)\bar\otimes\cl N 
= 
(\cl D_1\bar\otimes\cl N) \oplus (\cl D_2\bar\otimes\cl N),$$
we write $D = D_1 \oplus D_2$, where 
$D_i\in \cl D_i\bar\otimes\cl N$, $i = 1,2$. 
Identity (\ref{eq_mixed}) now implies that 
$$\Phi_{1,1}(x_{1,1}) = (\id \otimes \tau_{\cl N})\left(D_1^{*}(x_{1,1} \otimes 1_{\cl N})D_1 \right), 
\ \ \ x_{1,1}\in \cl B(H_1),$$
and hence $\Phi_{1,1}\in \frak{D}_{{\rm qc},\cl D_1}(H_1)$. 

\medskip

Now assume, by way of contradiction, that 
$\frak{D}_{{\rm q},\cl D}(H) \subseteq \frak{D}_{{\rm loc},\cl D}(H)$. 
By the previous part of the proof, there exists 
$\Phi_{1}\in \frak{D}_{{\rm q},\cl D_1}(H_1)\setminus 
\frak{D}_{{\rm loc},\cl D_1}(H_1)$. 
Let $D := D_1\oplus I$, viewed as an element of 
$\cl D\bar\otimes\cl N$, and 
$\Phi := \Phi_D$; thus 
$\Phi : \cl B(H_1\oplus H_2)\to \cl B(H_1\oplus H_2)$.
We have that $\Phi\in \frak{D}_{{\rm q},\cl D}(H)$; 
in addition, $\Phi_{1,1} = \Phi_1$ (see the 
decomposition (\ref{eq_Phideco})). 
By assumption, 
$\Phi\in \frak{D}_{{\rm loc},\cl D}(H)$ and hence, 
by the Claim, $\Phi_{1}\in \frak{D}_{{\rm loc},\cl D_1}(H_1)$, 
a contradiction. 

Finally, suppose that 
$\frak{D}_{{\rm q},\cl D}(H)$ is closed in the point-weak* 
topology. 
Using the previous part of the proof, fix 
$\Phi_{1}\in \overline{\frak{D}_{{\rm q},\cl D_1}(H_1)}
\setminus \frak{D}_{{\rm q},\cl D_1}(H_1)$. 
Let $\left(\Phi_{1}^{(n)}\right)_{n\in \bb{N}}\subseteq \frak{D}_{{\rm q},\cl D_1}(H_1)$ be such that 
$\Phi_{1}^{(n)}\to_{n\to \infty} \Phi_1$ in the point-weak* 
topology. 
As in the previous paragraph, define $\Phi$ and 
$\Phi^{(n)}$ with the property that 
$\Phi_{1,1} = \Phi_1$ and $\Phi_{1,1}^{(n)} = \Phi_1^{(n)}$, $n\in \bb{N}$.
By separability and compactness, choose a 
subsequence $\left(\Phi^{(n_k)}\right)_{k\in \bb{N}}$, 
converging in the point-weak* topology to a unital 
completely positive map $\Psi$. 
It is clear that belongs to 
$\overline{\frak{D}_{{\rm q},\cl D}(H)}$ and that 
it is a $\cl D$-bimodule map, 
hence admitting a decomposition of the form (\ref{eq_Phideco}). 
Since $\Phi^{(n_k)}_1\to_{k\to \infty} \Psi_{1,1}$, 
we have that $\Psi_{1,1} = \Phi_1$. 
By assumption, $\Psi\in \frak{D}_{{\rm q},\cl D}(H)$; 
hence, by the Claim, $\Phi_1\in \frak{D}_{{\rm q},\cl D_1}(H_1)$,
a contradiction. 



(ii) It is clear that an affirmative answer to 
the Connes Embedding Problem implies the equality 
$\frak{D}_{{\rm qa}, \cl D}(H) = \frak{D}_{{\rm qc}, \cl D}(H)$ for any Hilbert space and any maximal abelian von Neumann algebra $\cl D$.
For the converse direction, let $H=L^2(0,1)$ and $\cl D=L^\infty(0,1)$.
Suppose that $\frak{D}_{{\rm qa}, \cl D}(H) = \frak{D}_{{\rm qc}, \cl D}(H)$ and that the Connes Embedding Problem has a negative answer. 
As in the proof of \cite[Theorem 3.7]{hm2}, 
let $k\in \bb{N}$ and $\Phi : M_k\to M_k$ be a 
factorisable Schur multiplier which does not belong to $\frak{D}_{{\rm qa}, \ell^\infty([k])}(\mathbb C^k)$. Identifying $L^2(0,1)$ and $\ell^2([k])\otimes L^2(0,1)$  and  defining  $\Psi =\Phi\otimes\id$ on $\cl B(\ell^2([k])\otimes L^2(0,1))$ as in the proof of (i), we obtain that $\Psi\in \frak{D}_{{\rm qc}, \cl D}(H)\equiv \frak{D}_{{\rm qc}, \ell^\infty([k])\otimes\cl D}(\mathbb C^k\otimes H)$ by Lemma \ref{tensor_dil}, but  $\Psi\notin\frak{D}_{{\rm qa}, \cl D}(H)$, as otherwise the arguments from (i) will give that 
$\Phi(z)=(\id\otimes\tau)(C^*(z\otimes 1_{\cl C})C)$ for a unitary $C\in\ell^\infty([k])\otimes\cl C$, where $\cl C$ is a von Neumann algebra that admits an embedding in a matricial ultraproduct, contradicting that $\Phi\not\in \frak{D}_{{\rm qa}, \ell^\infty([k])}(M_k(\mathbb C))$. 
\end{proof}



\begin{thebibliography}{99}

\bibitem{ade}
{\sc C. Anantharaman-Delaroche}, 
{\it On ergodic theorems for free group actions on noncommutative
spaces}, 
{\rm Probab. Theory Related Fields 135 (2006), no. 4, 520-546}.


\bibitem{blackadar}
{\sc B. Blackadar}
{\it Operator Algebras:
Theory of C*-Algebras and von Neumann Algebras}
{\rm Springer-Verlag, 2006}.


\bibitem{bs}
\textsc{D. P. Blecher and R. R. Smith},
\textit{The dual of the Haagerup tensor product},
{\rm J. London Math. Soc. (2) 45 (1992), no. 1, 126-144}.


\bibitem{duquet}
\textsc{C. Duquet},
\textit{Dilation properties of measurable Schur multipliers and Fourier multipliers},
{\rm Positivity 26 (2022), no. 4, Paper No. 69, 41 pp.}

\bibitem{dlem}
\textsc{C. Duquet and C. Le Merdy},
\textit{A characterization of absolutely dilatable Schur multipliers},
{\rm Adv. Math. 439 (2024), Paper No. 109492}.

\bibitem{er}
{\sc E. G. Effros and Zh.-J. Ruan},
{\it Operator spaces},
{\rm Oxford University Press, 2000}.

\bibitem{fritz}
{\sc T. Fritz},
{\it Tsirelson's problem and Kirchberg's conjecture},
{\rm Rev. Math. Phys. 24 (2012), no. 5, 1250012, 67 pp.}


\bibitem{goldbring}
{\sc I. Goldbring},
{\it The Connes embedding problem: a guided tour},
{\rm Bull. Amer. Math. Soc. (N.S.) 59 (2022), no.4, 503–560.}


\bibitem{Gro} 
\textsc{A. Grothendieck}, 
\textit{R\'esum\'e de la th\'eorie m\'etrique
des produits tensoriels topologiques}, 
\textrm{Boll. Soc. Mat. Sao-Paulo 8 (1956), 1-79}.

\bibitem{jnvwy}
{\sc Z. Ji, A. Natarajan, T. Vidick, J. Wright and H. Yuen},
{\it MIP*=RE},
{\rm preprint (2020), arXiv:2001.04383}.


\bibitem{jnpp}
{\sc M. Junge, M. Navascues, C. Palazuelos, D. Perez-Garcia, V. Scholz and R. F. Werner}, 
{\it Connes' embedding problem and Tsirelson's problem}, 
{\rm J. Math. Phys. 52, 012102 (2011), 12 pages}.

\bibitem{jtt}
{\sc K. Juschenko, I. G. Todorov and L. Turowska},
{\it Multidimensional operator multipliers},
{\rm Trans. Amer. Math. Soc. 361 (2009), no. 9, 4683-4720}.

\bibitem{hm1}
\textsc{U. Haagerup and M. Musat}, 
\textit{Factorization and dilation problems for completely positive maps on von Neumann algebras},
{\rm Comm. Math. Phys. 303 (2011), no. 2, 555-594}.

\bibitem{hm2}
\textsc{U. Haagerup and M. Musat}, 
\textit{An asymptotic property of factorizable completely positive maps and the Connes embedding problem}, 
{\rm Comm. Math. Phys. 338 (2015), no. 2, 721-752}.

\bibitem{kadison-ringrose}
{\sc R. V. Kadison and J. R. Ringrose},
{\it Fundamentals of the theory of operator algebras I}, 
{\rm American Mathematical Society, 1997}.


\bibitem{kraus}
\textsc{J. Kraus},
{\it The slice map problem for $\sigma$-
weakly closed subspaces of von Neumann algebras},
{\rm Trans. Amer. Math. Soc. 279 (1983), no.1, 357-376}.

\bibitem{lmprsstw}
{\sc M. Lupini, L. Man\v{c}inska, V. Paulsen, D. E. Roberson, G. Scarpa, S. Severini, I. G. Todorov and A. Winter},
{\it Perfect strategies for non-signalling games}, 
{\rm Math Phys Anal Geom 23 (2020), 7}.

\bibitem{murphy}
{\sc G. J. Murphy}
{\it C*-algebras and operator theory},
{\rm Academic Press, Inc., Boston, MA, 1990}.

\bibitem{mr}
{\sc M. Musat and M. R\o rdam}, 
{\it Non-closure of quantum correlation matrices and factorizable channels that require infinite dimensional ancilla. With an appendix by Narutaka Ozawa}, 
{\rm Comm. Math. Phys. 375 (2020), no. 3, 1761-1776.}

\bibitem{oz}
{\sc N. Ozawa}, 
{\it About the Connes' embedding problem -- algebraic approaches},
{\rm Japan. J.  Math. 8 (2013), no. 1, 147-183}.


\bibitem{Pa} 
{\sc V.\,I.\,Paulsen}, 
{\it Completely bounded maps and operator algebras}, 
{\rm Cambridge University Press, 2002}.


\bibitem{pr}
{\sc V. I. Paulsen and M.  Rahaman},
{\it Bisynchronous games and factorizable maps},
{\rm Ann. Henri Poincar\'{e} 22 (2021), no. 2, 593-614}.





\bibitem{Pi2} 
{\sc G. Pisier}, 
{\it Tensor products of C*-algebras and operator spaces},
{\rm Cambridge University Press, 2020}.

\bibitem{schur} 
{\sc I. Schur}, 
{\it \"{U}ber die Darstellung der endlichen Gruppen durch gebrochene lineare Substitutionen}, 
{\rm J. reine angew. Mathematik 127 (1904), 20-50}.



\bibitem{smith-aut} 
{\sc R. R. Smith},
{\it Completely bounded module maps and the Haagerup tensor product},
{\rm J. Funct. Anal. 102 (1991), 156-175}.


\bibitem{svw} 
{\sc J. A. Smolin, F. Verstraete and A. Winter},  
{\it Entanglement of assistance andmultipartite state distillation}, 
{\rm Phys. Rev. A 72 (2005), 052317}.

\bibitem{tt-survey}
{\sc I. G. Todorov and L. Turowska},
{\it Schur and operator multipliers},
{\rm Banach Center Publ. 91 (2010), 385-410}.


\bibitem{tak1}
{\sc M. Takesaki},
{\it Theory of Operator Algebras, I},
{\rm Springer New York, NY, 1979}.

\bibitem{sakai}
{\sc S. Sakai},
{\it C*-Algebras and W*-Algebras},
{\rm Springer-Verlag Berlin Heidelberg, NY, 1971}.


\end{thebibliography}
\end{document}